\pgfplotsset{compat=1.8}
\newtheorem{thm}{Theorem}[section]
\newtheorem{defn}[thm]{Definition}
\newtheorem{prop}[thm]{Proposition}
\newtheorem{lem}[thm]{Lemma}
\newtheorem{cor}[thm]{Corollary}
\theoremstyle{definition}
\newenvironment{example}
  {\pushQED{\qed}\examplex}
  {\popQED\endexamplex}
\newcommand{\eps}{\varepsilon}	
\newcommand{\CF}{\mathit{CF}}
\newcommand{\Z}{{\mathbb Z}}
\newcommand{\Tor}{\mathop{\rm Tor}\nolimits}
\newcommand{\Hom}{\mathop{\rm Hom}\nolimits}
\newcommand{\im}{\mathop{\rm im}\nolimits}
\renewcommand{\P}{{\mathbb{P}}}
\renewcommand{\k}{\mathbb{K}}
\newcommand{\reg}{\mathrm{reg}}
\DeclareMathOperator{\End}{End}
\DeclareMathOperator{\Ind}{Ind}
\newcommand{\init}{\mathit{in}}
\newcommand{\full}{\mathit{full}}
\DeclareMathOperator{\pdim}{pdim}
\newcommand{\Sym}{\operatorname{Sym}}
\newcommand{\bR}{\mathbf{R}}
\newcommand{\smcdot}{{\textup{$\cdot$}}}
\newcommand{\rmH}{\mathrm{H}}
\newcommand{\bbC}{\mathbb{C}}
\newcommand{\bbZ}{\mathbb{Z}}
\newcommand{\bbD}{\mathbb{D}}
\newcommand{\bbF}{\mathbb{F}}
\newcommand{\bbK}{\mathbb{K}}
\newcommand{\bfe}{\mathbf{e}}
\newcommand{\bfR}{\mathbf{R}}
\newcommand{\vvirg}{, \ldots ,}
\newcommand{\frakS}{\mathfrak{S}}
\newcommand{\HS}{\mathrm{HS}}
\newcommand{\perm}{\mathrm{perm}}
\newcommand{\bfVBP}{\mathbf{VBP}}
\newcommand{\bfVNP}{\mathbf{VNP}}
\renewcommand{\bar}[1]{\overline{#1}}
\renewcommand{\tilde}[1]{\widetilde{#1}}
\newcommand{\xto}[1]{\xrightarrow{#1}}
\newcommand{\textbinom}[2]{{\textstyle \binom{#1}{#2}}}
\begin{document}

\title[Resolving the 2\texttimes 2 permanents]
{Bernstein-Gelfand-Gelfand meets geometric complexity theory: resolving the 2\texttimes 2 permanents of a 2\texttimes $\mathbf{n}$ matrix}

\author{Fulvio Gesmundo}
\address[F. Gesmundo]{Institut de Mathématiques de Toulouse; UMR5219 -- Université de Toulouse; CNRS -- UPS, F-31062 Toulouse Cedex 9, France}
\email{fgesmund@math.univ-toulouse.fr}

\author{Hang (Amy) Huang}
\address[A. Huang]{Department of Mathematics, Auburn University}
\email{hzh0105@auburn.edu}

\author{Hal Schenck}
\thanks{Schenck supported by NSF DMS-2006410}
\address[H. Schenck]{Department of Mathematics, Auburn University}
\email{hks0015@auburn.edu}

\author{Jerzy Weyman}
\thanks{Weyman supported by MAESTRO NCN - UMO-2019/34/A/ST1/00263 - Research in Commutative Algebra and Representation Theory, NAWA POWROTY - PPN/PPO/2018/1/00013/U/00001 - Applications of Lie algebras to Commutative Algebra, and OPUS grant National Science Centre, Poland grant UMO-2018/29/BST1/01290}
\address[J. Weyman]{Department of Mathematics, Jagiellonian University}
\email{jerzy.weyman@uj.edu.pl}

\makeatletter
\@namedef{subjclassname@2020}{%
  \textup{2020} Mathematics Subject Classification}
\makeatother

\subjclass[2020]{13D02, 13F55, 13C40, 68Q15}
\keywords{Free Resolution, Permanental ideal, Bernstein-Gelfand-Gelfand}

\begin{abstract}
We describe the minimal free resolution of the ideal of $2 \times 2$ subpermanents of a $2 \times n$ generic matrix $M$. In contrast to the case of $2 \times 2$ determinants, the $2 \times 2$ permanents define an ideal which is neither prime nor Cohen-Macaulay. We combine work of Laubenbacher-Swanson \cite{LS} on the Gr\"obner basis of an ideal of $2 \times 2$ permanents of a generic matrix with our previous work in \cite{elsw} connecting the initial ideal of $2 \times 2$ permanents to a simplicial complex. The main technical tool is a spectral sequence arising from the Bernstein-Gelfand-Gelfand correspondence. 
\end{abstract}
\maketitle

\section{Introduction}\label{sec:one}
For a matrix $M$ whose entries are the variables in a polynomial ring, there are
two classes of ideals which arise naturally: the ideal consisting of all $k \times k$ 
subdeterminants of $M$, and the ideal consisting of all $k \times k$ subpermanents of $M$. 
\begin{defn}\label{genericM}
Let $\k$ be a field of characteristic zero, and let $M$ be an $m \times n$ matrix with entries $M_{ij} = x_{ij} \in R = \k[x_{11} \vvirg x_{mn}]$. The $k$-th generic determinantal ideal $D_k(M)$ is the ideal generated by the $k \times k$ subdeterminants of $M$. The $k$-th generic permanental ideal $P_k(M)$ is the ideal generated by the $k \times k$ subpermanents of $M$.
\end{defn} 
The zero set of the elements of the $k$-th generic determinantal ideal is the locus, in the space of all $m \times n$ matrices, of matrices of rank less than $k$. These ideals have been intensively studied; Eagon-Northcott determined the free resolution for maximal minors in \cite{EN}, and Lascoux solved the question for arbitrary $k$ in \cite{L}. In contrast, generic permanental ideals have received relatively little attention. In \cite{LS}, Laubenbacher-Swanson describe a Gr\"obner basis for the ideal of $2 \times 2$ permanents, and in \cite{K}, Kirkup determines the minimal primes of the ideal of $3 \times 3$ permanents. 

Permanental ideals arise in several contexts: for a bipartite graph $G$, the permanent of the adjacency matrix of $G$ counts the number of perfect matchings of $G$; in the study of matrices over a finite field they appear in the Alon-Tarsi conjecture \cite{AT}. The recent interest stems from computational complexity: the best known algorithm to compute the permanent is exponential, and the permanent plays a central role in Valiant's conjecture \cite{V}, which in turn spurred the development of geometric complexity theory by Mulmuley and Sohoni \cite{Mul1,Mul2, Mul3}. We discuss some of these motivations in \autoref{sec: motivations}. In this work, we investigate the minimal free resolution of the simplest nontrivial permanental ideal, in the case $k = m = 2$, that is, the minimal free resolution of the $2 \times 2$ permanents of a $2 \times n$ matrix of variables. For a $2 \times n$ matrix $M$ as in Definition~\ref{genericM}, we use $P_{2 \times n}$ to denote $P_2(M)$.

\begin{thm}\label{main}
For every $n \geq 3$, let  
\[
F_\bullet : 0 \to \cdots \to F_i \to F_{i-1} \to  \cdots \to F_1 \to R \to R / P_{2 \times n} \to 0
\]
be the minimal free resolution of $R/P_{2 \times n}$, as an $R$-module. Let $b_{ij}$ be the graded Betti numbers of $F_\bullet$, that is $F_i = \bigoplus_{j \geq 0} R(-j)^{b_{i, j}}$. Then
\begin{align*}
b_{1,2} = \binom{n}{2}, \qquad b_{i,i+1} =& 0 \text{ if $i > 1$}, \\
b_{i,i+2} =& \left[2 \binom{n}{i+2} - \binom{2n}{i+2} + \binom{n+1}{2} \binom{2n-2}{i} - 2 \binom{n}{2}\binom{2n-3}{i-1} \right]  \\ +&  \sum_{w = 3}^{\lfloor\frac{i+2}{2}\rfloor} 2^{i+2-2w} \binom{n}{w} \binom{n-w}{i+2-2w} \binom{w-1}{2} , \\
b_{i,i+3} =& \sum_{w = 3}^{\lfloor\frac{i+3}{2}\rfloor} 2^{i+3-2w} \binom{n}{w} \binom{n-w}{i+3-2w} \binom{w-1}{2},\\
b_{i,j} =& 0 \text{ otherwise},
\end{align*}
with the convention that $\binom{a}{b} = 0$ if $b < 0$ or $b > a$.
In particular, $R/P_{2 \times n}$ has regularity three and projective dimension $2n-3$. 
\end{thm}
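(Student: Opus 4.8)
The plan is to compute the minimal free resolution of $R/P_{2\times n}$ by passing through the initial ideal with respect to the Gröbner basis of Laubenbacher--Swanson, and then identifying the Betti numbers of the initial ideal with combinatorial data of a simplicial complex via the Bernstein--Gelfand--Gelfand (BGG) correspondence. First I would recall from \cite{LS} the explicit Gröbner basis of $P_{2\times n}$ and hence a description of the initial ideal $\init(P_{2\times n})$, which by \cite{elsw} is the Stanley--Reisner ideal of an explicit simplicial complex $\Delta$ on the $2n$ variables $x_{ij}$. Since Betti numbers can only grow under taking initial ideals, $b_{i,j}(R/P_{2\times n}) \le b_{i,j}(R/\init(P_{2\times n}))$; the main work is to show that the resolution of the initial ideal is already minimal for the permanental ideal itself, i.e.\ that no consecutive cancellation occurs. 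For a squarefree monomial ideal, $b_{i,j}(R/\init P)$ is governed by reduced simplicial homology of links (Hochster's formula), so the strategy is: (i) compute all the relevant reduced homology groups $\tilde{\rmH}_\bullet$ of induced subcomplexes of $\Delta$; (ii) assemble these into the claimed formulas; (iii) prove the resulting Betti table has no two nonzero entries $b_{i,j}$, $b_{i+1,j}$ in the same column, so that by a rank argument the resolution does not shrink when we deform from $\init P$ back to $P$.

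For step (i), the key technical device advertised in the abstract is the spectral sequence coming from the BGG correspondence: applying BGG to the simplicial (co)chain complexes of $\Delta$ and its links produces a complex of free modules over the exterior algebra whose cohomology is controlled by a spectral sequence with $E_1$-page built from the linear strands, converging to the Tor modules we want. Concretely, I would set up the Cartan--Eilenberg-type spectral sequence relating $\rmH^\bullet$ of the sheafification / the Tate resolution to the graded pieces of $\mathrm{Tor}^R(R/\init P, \k)$, use the known low-dimensional topology of $\Delta$ (it should deformation retract onto something explicit, built from the combinatorics of $2\times 2$ "broken" submatrices) to see that only two adjacent linear strands survive, and read off that the resolution is concentrated in the rows $j = i+2$ and $j = i+3$ for $i\ge 2$ (plus the obvious $b_{1,2} = \binom n2$ coming from the $\binom n2$ generators). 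This is where the regularity-three and projective-dimension-$(2n-3)$ statements will drop out: regularity $= 3$ is exactly the statement that the nonzero rows are $j-i \in \{1,2,3\}$, and $\pdim = 2n-3$ follows from locating the last nonzero column, which corresponds to the top nonvanishing homology of a link of dimension $2n-4$.

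For step (ii), the explicit binomial formulas should come from a careful bookkeeping of the faces of $\Delta$ by "type": the parameter $w$ in the sums counts the number of columns of $M$ in which both entries are used (these contribute the $2\times 2$ "forbidden pattern" and hence the factor $\binom{w-1}{2}$ of homology), the factor $2^{i+2-2w}$ records the binary choice of which entry to take in each of the remaining used columns, and the binomials $\binom nw\binom{n-w}{i+2-2w}$ count the placement of these columns; the leading bracketed term in $b_{i,i+2}$ is an inclusion--exclusion correction coming from the boundary strata (the $\binom{2n}{i+2}$, $\binom{n+1}{2}\binom{2n-2}{i}$, etc., are Euler-characteristic contributions of the full $2n$-vertex skeleton versus the sub-skeleta supported on single rows). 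The main obstacle I anticipate is step (iii) together with the precise vanishing in the spectral sequence: one must show the BGG spectral sequence degenerates early enough (no higher differentials mixing the two surviving strands) and that the comparison $b(R/P) = b(R/\init P)$ is exact rather than merely an inequality --- this is the place where the special structure of the $2\times 2$ permanental ideal (as opposed to general $k$) is essential, and where I expect the argument to require the most care, probably by exhibiting the cycles explicitly or by a dimension count that forces equality from the regularity and projective-dimension bounds computed on both sides.
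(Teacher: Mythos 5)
Your overall scaffold (Laubenbacher--Swanson Gr\"obner basis, Stanley--Reisner complex, Hochster's formula, then compare with $P_{2\times n}$) is the paper's scaffold, and step (ii) is a reasonable gloss on the combinatorics in the proof of Theorem~\ref{lemma:thirdRowInitial}. But step (iii) contains a concrete error that sinks the plan as written.

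You propose to prove $b_{i,j}(R/P_{2\times n})=b_{i,j}(R/\init P_{2\times n})$ by showing the Betti table of the initial ideal has no two nonzero entries $b_{i,j}$, $b_{i+1,j}$ in the same internal degree $j$, so that no consecutive cancellation can occur. This criterion is simply false for this family: already for $n=4$ (Example~\ref{example: small perm and det}) one has $b_{3,6}=4$ and $b_{4,6}=6$, both nonzero. The same pattern persists for all $n$, since the ideal has generators in degree $2$ and a last linear strand in degree $j=i+3$, so the rows $j-i=2$ and $j-i=3$ overlap in every middle column. So the rank/cancellation argument you want cannot get off the ground, and this is exactly the place where the genuine difficulty of the problem sits.

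The paper resolves this in a way you did not anticipate. It never argues ``no cancellation'' at all. Instead it (a) shows via BGG that only the \emph{third} row ($j=i+3$) of the Betti tables of $R/P_{2\times n}$ and $R/\init P_{2\times n}$ agree (Theorem~\ref{thm: spectral sequence count}), by building a genuine double complex out of the two BGG complexes --- the crucial technical point is that the Koszul-type differentials $\delta^{\full}$ and $\delta^{\init}$ anticommute (Lemma~\ref{lemma: commuting diagram}), which is special to this ideal and requires the explicit choice of bases in \S\ref{subsec: deformation} --- and then showing the horizontal map on the relevant first page is zero on an explicit set of generators $\phi_{ijk}$, so the vertical spectral sequence degenerates almost everywhere (Lemma~\ref{lem: first page vertical}); an unboundedness/counting argument with the truncation parameter $N$ then forces equality. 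Then (b) the second row of $R/P_{2\times n}$ is recovered not by comparing with the initial ideal directly but by a Hilbert-series identity giving $b_{k,k+2}-b_{k-1,k+2}$ (Corollary~\ref{hilbSeries}), combined with the already-known third-row values. Your sketch of the BGG spectral sequence as a ``Cartan--Eilenberg / Tate resolution'' comparison is not what is used here, and gives no mechanism for step (iii). Without the anticommutativity lemma and the explicit vanishing of the induced horizontal map, there is no double complex and no degeneration, so the key equality of the third rows is not established. You should replace step (iii) by this two-part argument, or supply a different mechanism for comparing $P$ and $\init P$ that does not rely on the (false) disjointness of linear strands.
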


\subsection{Motivations}\label{sec: motivations}
We briefly outline the role of algebraic and geometric methods in the study of Valiant's complexity classes in algebraic complexity theory. We refer to \cite{BLMW} and \cite[Ch.6]{Lan:GeometryComplThBook} for details.

Valiant's flagship conjecture in algebraic complexity theory states the the complexity classes $\bfVBP$ and $\bfVNP$ are distinct \cite{V}: more precisely $\bfVNP \not \subseteq \bfVBP$. It has a purely algebraic formulation, in terms of the notion of determinantal complexity of the permanent polynomial $\perm_m$. The \emph{determinantal complexity} of a homogeneous polynomial $F \in \bbC[x_1 \vvirg x_M]$ of degree $d$ is the smallest integer $n$ such that $z^{n-d} F$ can be expressed as the determinant of an $n \times n$ matrix whose entries are linear forms in $x_1 \vvirg x_M, z$. Setting $V = \bbC[x_1 \vvirg x_M,z]_1$, this is equivalent to the fact that $z^{n-d} F \in \End(V) \cdot \det_n$, where $\det_n$ is the determinant polynomial in $n^2$ variables, and $\cdot$ denotes the natural action of $\End(V)$ on  $\bbC[x_1 \vvirg x_M,z]_n$. 

Valiant's conjecture is equivalent to the fact that the determinantal complexity of the permanent polynomial $\perm_m$ grows superpolynomially, as a function of $m$. In their GCT program \cite{Mul2}, Mulmuley and Sohoni proposed a representation-theoretic approach to a potentially stronger version of this conjecture: $\bfVNP \not \subseteq \bar{\bfVBP}$. This can be phrased similarly, using the notion of border determinantal complexity, which is the smallest $n$ such that $z^{n-d} F \in \bar{ \End(V) \cdot \det_n}$, where the overline indicates closure in the Zariski topology.

This conjecture motivated the study of geometric properties of the permanent and determinant polynomials, in order to determine obstructions to the containment $z^{n-m} \perm_m \in \bar{ \End(V) \cdot \det_n}$. Strong barriers have been shown for the representation-theoretic method proposed in the GCT program \cite{Mul2,IkPa:Rectangular_Kron_in_GCT,BuIkPa:no_occurrence_obstructions_in_GCT}, and for other approaches, such as the method of shifted partials \cite{GKKS:ArithmeticCircuitsChasmDepthThree,elsw}. Importantly, these barriers rely on the presence of the \emph{padding} factor $z^{n-m}$ in the statement of the problem. There are however \emph{padding-free} formulations of the conjecture based on the \emph{iterated matrix multiplication polynomial} \cite{IkLan:Compl_of_perm_in_various_comp_models}, and some of the barriers proved in the padded setting extend to the padding-free setting \cite{GesIkPa:GCTMatrixPowering,GesLan:ExplicitPolysMaxPartialsMulmuley}.

This requires one to investigate alternative approaches to the conjecture, based on comparing more advanced invariants that can separate the (padded) permanent from the determinant, or from other polynomials which characterize the class $\bfVBP$, such as the iterated matrix multiplication polynomial. For instance, one can investigate the Betti numbers of the minimal free resolution of the Jacobian ideal. A similar approach has been employed in \cite{BoTe:WaringRankSyz}, in a slightly different setting, and it successfully provided lower bounds for the Waring rank of the determinant and the permanent polynomials, in small cases.

 A fundamental component of this approach consists in understanding the minimal free resolution of permanental ideals, which are the Jacobian ideals of the permanent polynomial. We study this problem in the first non-trivial case, which is the ideal of $2 \times 2$ subpermanents of a $2 \times n$ matrix; this is a first step in a program to obtain improved lower bounds in a range relevant for complexity theory.

\subsection{Resolutions and Betti tables}
Free resolutions of modules are a central object in commutative algebra. In this section, we recall the definition and some basic properties. We refer to \cite[Ch.1]{Eisenbud:SyzygyBook} for an introduction to the subject.

Let $S = \bbK[x_1 \vvirg x_N]$ denote the polynomial ring in $N$ variables endowed with its standard grading defined by $\deg(x_i) = 1$;  $S(d)$ denotes the ring $S$, regarded as a graded module over itself, with the grading shifted by $d$, that is $\deg(x_i) = 1+d$ as an element of $S(d)$. 

Let $M$ be a finitely generated graded module over $S$. A \emph{free resolution} of $M$ is an exact sequence 
\[
F_\bullet : 0 \to F_m \xto{\delta_{m}} \cdots F_2 \xto{\delta_2} F_1 \xto{\delta_1} F_0 \to M \to 0
\]
where $$F_i = \bigoplus S(-j)^{\oplus b_{ij}}$$ are free $S$-modules of finite rank. The maps $\delta_i : F_i \to F_{i-1}$ are the \emph{differentials} of the resolution: after fixing homogeneous bases of $F_i$ and $F_{i-1}$, the map $\delta_i$ can be regarded as a matrix whose entries are homogeneous polynomials.

The {\em Hilbert syzygy theorem} \cite{Eisenbud:SyzygyBook} guarantees that every finitely generated graded $S$-module has a finite free resolution. The integers $b_{ij}$ are called the \emph{graded Betti numbers} of the resolution. A free resolution is \emph{minimal} if the Betti numbers $b_{ij}$ are minimal among all free resolutions of $M$. A module $M$ admits a unique minimal free resolution, up to a certain notion of equivalence. In the following, we always assume that free resolutions are minimal. 

The Betti numbers of a minimal free resolution are uniquely determined by $M$ and are called the Betti numbers of $M$. The length $m$ of a minimal free resolution is called the \emph{projective dimension} of $M$ and denoted $\pdim(M)$. A consequence of the minimality property is that $b_{i,j} = 0$ if $j < i$. The value $\reg(M) = \max \{ j : b_{i,i+j} \neq 0\}$ is called the regularity of $M$. The Betti numbers are often recorded in a \emph{Betti table}, with the convention that the entry $(i,-j)$ of the table records the number $b_{i,i+j}$; the top row records the total rank of the corresponding free module. In this way $\pdim(M)$ and $\reg(M)$ are the number of columns and the number of rows of the Betti table, respectively. We illustrate this in an example. 

\begin{example}\label{example: small perm and det}
 Let $M = R/ P_{2 \times 4}$ be the coordinate ring of the ideal of $2 \times 2$ permanents of a $2 \times 4$ matrix. Its minimal free resolution turns out to be 
 \[
 0 \to \begin{array}{c}
 R(-8)^{\oplus 3} \\
 \end{array}\to \begin{array}{c}
 R(-6)^{\oplus 6} \\
 R(-7)^{\oplus 8} \\
 \end{array} \to
 \begin{array}{c}
 R(-5)^{\oplus 24} \\
 R(-6)^{\oplus 4} \\
 \end{array} 
 \to \begin{array}{c}
 R(-4)^{\oplus 22} \\
 \end{array} 
 \to  R(-2) ^{\oplus 6} \to  R \to M \to 0.
 \]
 For instance, the differential $\delta_3: F_4 \to F_3$ is represented by a $(24 + 4) \times (6 + 8)$ whose entries are homogeneous polynomial of degree $1$ and $2$; the matrix has a block structure so that it is a degree $0$ map of $R$-modules. The corresponding Betti table is 
 \[
\begin{matrix}
        & 0 & 1 & 2 & 3 & 4 & 5\\
     \text{total:}
        & 1 & 6 & 22 & 28 & 14 & 3\\
     0: & 1 & - & - & - & - & -\\
     1: & - & 6 & - & - & - & -\\
     2: & - & - & 22 & 24 & 6 & -\\
     3: & - & - & - & 4 & 8 & 3
     \end{matrix}
     \]
Therefore the regularity of $M$ is $3$ and its projective dimension is $5$.

For comparison, let $N = R/ D_2(M)$ be the determinantal ideal of $2 \times 2$ minors of $M$. Its free resolution is given by the Eagon-Northcott complex \cite[Sec. A2H]{Eisenbud:SyzygyBook}
\[
0 \to R(-4)^{\oplus 3} \to R(-3)^{\oplus 8} \to R(-2)^{\oplus 6} \to R \to N \to 0.
\]
All differential are matrices of linear forms. The corresponding Betti table is 
\[
\begin{matrix}
         & 0 & 1 & 2 & 3\\
      \text{total:}
         & 1 & 6 & 8 & 3\\
      0: & 1 & - & - & -\\
      1: & - & 6 & 8 & 3
      \end{matrix}
\]
Therefore $N$ has regularity $1$ and projective dimension $3$.
 \end{example}

\subsection{Permanents and Determinants}
As suggested in \autoref{example: small perm and det}, the situation for determinantal ideals is significantly easier. In fact, the minimal free resolution of the general determinantal ideal $D_k(M)$ was determined in full generality by Lascoux \cite{L}. When $k = m$, it is given by the {\em Eagon-Northcott} complex \cite[Sec. A2H]{Eisenbud:SyzygyBook}. The ideal $D_m(M)$ is prime and Cohen-Macaulay. The Cohen-Macaulay property implies that quotienting by a regular sequence preserves the free resolution \cite{E}. Many varieties commonly encountered in algebraic geometry are determinantal. 
\begin{example}\label{TC}
The $d^{th}$ Veronese embedding 
\[
\P^1 \rightarrow \P^d \mbox{ defined via }(s:t) \mapsto (s^d:s^{d-1}t:\cdots: st^{d-1}:t^d)
\]
has defining equations given by the vanishing of the $2 \times 2$ minors of 
\[
\left[ \!
\begin{array}{cccc}
x_0 & x_1 & \cdots &x_{d-1}\\
x_1 & x_2 & \cdots & x_d
\end{array}\! \right]
\]
While this matrix is not generic in the sense of \autoref{genericM}, replacing $x_i$ with $x_{d-1+i}$ in the bottom
row yields a generic matrix; an easy check shows the sequence $x_i-x_{d-1+i}$ is regular, and so the minimal free 
resolution will be an Eagon-Northcott complex. 
\end{example} 
Ideals generated by the minors of a suitably generic matrix arise in many contexts: 
\begin{itemize}
    \item In \cite{e1}, Eisenbud relates decomposition a divisor $D = E+F$ on a curve $C$ to determinantal equations for $I_C$. 
    \item In \cite{SidS}, Sidman-Smith show that for any projective variety $X$, there is a line bundle $D$ such that for all $E$ with $E\otimes D^\vee$ ample, the ideal of the image of $X$ is determinantally presented, extending a result of Mumford. 
    \item In \cite{BGL}, Buczy\'{n}ski-Ginensky-Landsberg relate equations of secant varieties to determinantal equations. 
        \end{itemize}

\subsection{Outline of the proof and structure of the paper}

To prove \autoref{main}, we exploit the close connection between homological properties of an ideal and the ones of its initial ideal. We observe in \autoref{sec: simplicial complex inP2} that the initial ideal $\init(P_{2 \times n})$ of $P_{2 \times n}$, with respect to a certain monomial order, is a squarefree monomial ideal. In \autoref{CMreg}, we use Stanley-Reisner Theory and Alexander duality to prove that the regularity of $R/ \init(P_{2 \times n})$ is $3$; in particular, the only nonzero rows of the Betti table of $P_{2 \times n}$ are the first, second and third.
\begin{itemize}
 \item For the first row of the Betti table, it suffices to show that $P_{2 \times n}$ has no linear syzygies. This follows from \cite{elsw}; we include a proof in \autoref{rmk: first row}.
\item In \autoref{lemma:thirdRowInitial} we use \emph{Hochster's formula} (\autoref{Hochster}) to compute the third row of the Betti table of the initial ideal $\init(P_{2 \times n})$.
\item For the second row of the Betti table, we first use a Hilbert series argument, together with standard semicontinuity properties of the Betti numbers, to obtain a formula for the differences between the Betti numbers of $P_{2 \times n}$ in its second and third row: this is given in \autoref{hilbSeries}.
\item To complete the proof of \autoref{main}, we show that the third rows of the Betti tables of $P_{2 \times n}$ and $\init(P_{2 \times n})$ coincide. This is obtained in \autoref{sec: spectral sequence} via a spectral sequence argument. We use the Bernstein-Gelfand-Gelfand correspondence to obtain an equivalent statement in terms of the homology of certain complexes of modules over an exterior algebra. We use such complexes to build a double complex in \autoref{lemma: commuting diagram}, and in \autoref{thm: spectral sequence count} we analyze the resulting spectral sequence to conclude. 
\end{itemize}
\section{A simplicial complex encoding the initial ideal of size two permanents}\label{sec: simplicial complex inP2}

A number of properties of an ideal can be investigated by studying its associated initial ideal, with respect to some monomial order. We refer to \cite[Ch.15]{Ecommalg} for the theory and we will mention throughout this section the important theoretical results that we will use.

In \cite[Theorem 3.1]{LS}, Laubenbacher-Swanson determine a Gr\"obner basis for the ideal of $2 \times 2$ permanents with respect to a diagonal monomial order. We state their result here in the particular case of a generic $2 \times n$ matrix, and in terms of an \emph{antidiagonal} order, that is a monomial order such that the leading term of any subpermanent is the product of the antidiagonal entries. 
\begin{thm}[{\cite[Theorem 3.1]{LS}}]\label{thm: gb permanents}
Let $P_{2 \times n}\subseteq \bbK[x_{11} \vvirg x_{2n}]$ be the ideal of $2\times 2$ permanents of the matrix 
\[
\left[\begin{array}{cccc}
x_{11} & \cdots &x_{1n}\\
x_{21} & \cdots &x_{2n}
\end{array} \right].
\]
A reduced Gr\"obner basis of $P_{2 \times n}$ with respect to any antidiagonal monomial order consists of the following three families of polynomials:
\begin{align*}
&x_{1i}x_{2j}+x_{1j}x_{2i} \quad \text{for $i > j$}, \\
&x_{1i}x_{2j}x_{2k} \quad \text{for $i< j < k$}, \\
&x_{1i}x_{1j}x_{2k} \quad \text{for $i< j < k$}.
\end{align*}
\end{thm}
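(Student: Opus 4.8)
The plan is to treat this as a reformulation of \cite[Theorem 3.1]{LS} and to establish it in two ways: by deducing it from that reference, and, for safety, by a direct verification via Buchberger's criterion. Laubenbacher--Swanson state their result for a \emph{diagonal} term order, where the leading term of each subpermanent is the product of its diagonal entries, so the first thing I would observe is that the algebra automorphism of $R = \bbK[x_{11} \vvirg x_{2n}]$ reversing the columns of $M$, namely $x_{ak} \mapsto x_{a,\,n+1-k}$, fixes $P_{2 \times n}$ and carries any antidiagonal order to a diagonal one (reversing columns swaps the diagonal and the antidiagonal of each $2\times 2$ submatrix). Since this automorphism is an order isomorphism between the two term orders, the image of the reduced Gröbner basis of \cite{LS} under it is a reduced Gröbner basis of $P_{2 \times n}$ for the given antidiagonal order; relabeling indices turns it into the three displayed families. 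This settles the theorem modulo \cite{LS}.

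For a self-contained proof I would run Buchberger's criterion on the set $G$ consisting of the three families. First I would record that the permanents $p_{ij} = x_{1i}x_{2j} + x_{1j}x_{2i}$ with $i > j$ generate $P_{2 \times n}$ and satisfy $\In(p_{ij}) = x_{1i}x_{2j}$ for every antidiagonal order, and then that the two cubic families lie in $P_{2 \times n}$, via the identities
\[
2\,x_{1i}x_{2j}x_{2k} = x_{2k}p_{ji} + x_{2j}p_{ki} - x_{2i}p_{kj}, \qquad 2\,x_{1i}x_{1j}x_{2k} = x_{1i}p_{kj} + x_{1j}p_{ki} - x_{1k}p_{ji},
\]
valid for $i < j < k$; here the characteristic zero hypothesis is used. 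Next I would note that the leading terms of the three families are $x_{1i}x_{2j}$ ($i > j$), $x_{1i}x_{2j}x_{2k}$ ($i<j<k$) and $x_{1i}x_{1j}x_{2k}$ ($i<j<k$), that none of these divides another, and that the only trailing term that occurs at all --- the term $x_{1j}x_{2i}$ (with $j < i$) of a quadric --- is divisible by no leading term; hence once $G$ is shown to be a Gröbner basis it is automatically reduced, the cubics being monomials.

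The substantive step is to check that every $S$-polynomial reduces to zero modulo $G$. Pairs of cubics contribute nothing, since the $S$-polynomial of two monomials is zero. For a pair of quadrics the leading terms are coprime --- so Buchberger's first criterion applies --- unless they share a variable; in the two remaining configurations the $S$-polynomial is, up to sign, a difference of two degree-three monomials, and I would show that each such monomial either is already the leading term of a cubic in $G$ or becomes one after a single reduction step by a quadric $p_{ab}$, the two contributions then cancelling. A quadric paired with a cubic is the last case; here I would invoke the row-swap symmetry interchanging the two cubic families to reduce to $p_{ij}$ against $x_{1a}x_{2b}x_{2c}$, observe that the $S$-polynomial vanishes unless the leading terms share a variable (i.e.\ $a = i$, $b = j$, or $c = j$), and verify in each subcase that it is a single degree-four monomial lying in the ideal generated by the leading terms of $G$, again after at most one reduction by a quadric. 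The main obstacle is entirely organizational: keeping the case analysis of which variable a pair of leading terms shares (in the quadric--quadric and quadric--cubic $S$-pairs) under control, and confirming that each intermediate reduction is genuinely by an element of $G$ and strictly lowers the term order. Once Buchberger's criterion is verified and reducedness is in hand, $G$ is the reduced Gröbner basis of $P_{2 \times n}$.
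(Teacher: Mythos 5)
The paper does not prove this statement at all: it simply restates \cite[Theorem~3.1]{LS}, specialized to a $2\times n$ matrix, in the language of antidiagonal orders, and the implicit translation is exactly your column-reversing automorphism $x_{ak}\mapsto x_{a,n+1-k}$. So the first half of your proposal is the same (unwritten) content the paper appeals to. Your second, self-contained Buchberger verification goes beyond the paper and is substantively sound: the membership identities are correct (and indeed use characteristic $\neq 2$), the leading terms and reducedness observation are right, and your treatment of the quadric--quadric and quadric--cubic $S$-pairs closes.

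One detail needs repair. Swapping the two rows, $x_{1k}\leftrightarrow x_{2k}$, turns a $2\times 2$ antidiagonal into a diagonal, so it carries an antidiagonal order to a \emph{diagonal} order, not to another antidiagonal order. Consequently, ``row-swap symmetry'' does not, by itself, reduce the $p_{ij}$ versus $x_{1a}x_{1b}x_{2c}$ case to the $p_{ij}$ versus $x_{1a}x_{2b}x_{2c}$ case within the class of antidiagonal orders, which is what you need since the theorem asserts the Gr\"obner basis property for every antidiagonal order. The fix is to use the composite of the row swap with the column reversal you already introduced: $x_{1k}\mapsto x_{2,n+1-k}$, $x_{2k}\mapsto x_{1,n+1-k}$. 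This is an involution preserving $P_{2\times n}$, carrying antidiagonal orders to antidiagonal orders (one checks directly that it fixes the antidiagonal monomial of each $2\times 2$ submatrix up to the relabeling of columns), fixing the quadric family up to relabeling, and interchanging the two cubic families. Alternatively, the remaining $S$-pairs $p_{ij}$ versus $x_{1a}x_{1b}x_{2c}$ can simply be computed directly; in each of the three configurations $i=a$, $i=b$, $j=c$ the $S$-polynomial is a single degree-four monomial already divisible by a cubic leading term in $G$, so the verification is no harder than the case you wrote out.
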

An immediate consequence of \autoref{thm: gb permanents} is that the initial ideal of $P_{2 \times n}$ with respect to any antidiagonal monomial order is 
\[
\init(P_{2 \times n})= \begin{cases} x_{1i}x_{2j} & i > j, \\
x_{1i}x_{2j}x_{2k} & i< j < k, \\
x_{1i}x_{1j}x_{2k} & i< j < k.
\end{cases}
\]
In particular, $\init(P_{2 \times n})$ is a squarefree monomial ideal. We will determine the regularity of $P_{2 \times n}$ by studying $\init(P_{2 \times n})$ via {\em Stanley-Reisner theory}.

\subsection{Stanley-Reisner ring and Alexander duality}\label{sec: SR and Alexander}
Many homological properties of a squarefree monomial ideal are determined by the combinatorial structure of an associated simplicial complex. 
\begin{defn}\label{SRing}
Let $\Delta$ be a simplicial complex on the vertex set $V=\{1 \vvirg \nu\}$ with $|V|=\nu$. The \emph{Stanley-Reisner ideal} ideal of $\Delta$ is 
\[
I_\Delta = \langle x_{i_1}\cdots x_{i_m} \mid [i_1,\ldots,i_m] \mbox{ is not a face of }\Delta \rangle \subseteq \bbK[x_1 \vvirg x_\nu]
\]
and the \emph{Stanley-Reisner ring} of $\Delta$ is $\mathbb{K}[x_1,\ldots,x_\nu]/I_\Delta$.
\end{defn}
It is easy to see that the Stanley-Reisner ideal defines a one-to-one correspondence between squarefree monomial ideals and simplicial complexes. Stanley-Reisner theory establishes a correspondence between the algebraic properties of the ideal $I_\Delta$ and the combinatorial properties of $\Delta$, regarded as topological space. We refer to \cite[Part I]{MS} and \cite[Ch.5]{S} for an introduction on this subject. We record an easy, important fact. A \emph{coface} of $\Delta$ is the complement of a face of $\Delta$; let $\CF(\Delta)$ be the set of minimal cofaces of $\Delta$. 
\begin{thm}[{\cite[Thm. 5.3.3]{S}}]\label{primaryDecompSR}
Let $\Delta$ be a simplicial complex. The primary decomposition of the ideal $I_\Delta$ is
\[
I_\Delta = \bigcap\limits_{[v_{i_1} \cdots v_{i_k}] \in \CF(\Delta)}\langle x_{i_1}, \ldots, x_{i_k} \rangle.
\]
\end{thm}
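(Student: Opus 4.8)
The plan is to prove the equality of these two monomial ideals by comparing, monomial by monomial, membership in each side; recall that a monomial ideal is completely determined by the set of monomials it contains, so this suffices. For a monomial $u = \prod_{i=1}^{\nu} x_i^{a_i}$ write $\supp(u) = \{\, i : a_i > 0 \,\} \subseteq V$, and for a subset $G \subseteq V$ write $P_G = \langle x_i : i \in G \rangle$; under the obvious identification, a minimal coface $[v_{i_1} \cdots v_{i_k}]$ corresponds to the subset $G = \{ i_1 \vvirg i_k \}$ and to the prime ideal $P_G$.

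First I would record two elementary membership criteria. Since $I_\Delta$ is a monomial ideal, $u \in I_\Delta$ holds if and only if $u$ is divisible by one of the generating squarefree monomials $x_{i_1} \cdots x_{i_m}$ attached to a nonface $[i_1 \vvirg i_m]$ of $\Delta$; equivalently, $\supp(u)$ contains a nonface, and since $\Delta$ is closed under passage to subsets this is in turn equivalent to $\supp(u)$ itself being a nonface. On the other side, $u \in P_G$ if and only if $x_i \mid u$ for some $i \in G$, i.e. $\supp(u) \cap G \neq \emptyset$; hence $u \in \bigcap_{G \in \CF(\Delta)} P_G$ if and only if $\supp(u)$ meets every minimal coface of $\Delta$.

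It then remains to see that these two conditions on $S := \supp(u)$ coincide. Here I would first note that the cofaces of $\Delta$ form an up-set for inclusion: any superset of a coface $V \setminus F$ has complement contained in the face $F$, hence is a face, hence the original set is again a coface. Consequently the minimal cofaces are precisely the complements $V \setminus F$ of the facets (maximal faces) $F$ of $\Delta$. Now $S$ is a face of $\Delta$ if and only if $S$ is contained in some facet $F$, i.e. $S \cap (V \setminus F) = \emptyset$ for some facet $F$; negating, $S$ is a nonface if and only if $S \cap (V \setminus F) \neq \emptyset$ for every facet $F$, which is exactly the condition that $S$ meets every minimal coface. Combining with the criteria of the previous paragraph, $u \in I_\Delta$ if and only if $u \in \bigcap_{G \in \CF(\Delta)} P_G$, so the two ideals are equal.

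I do not expect a substantive obstacle: the proof is purely set-theoretic, which is why the statement is flagged as an easy fact. The up-set argument identifying the minimal cofaces with the complements of the facets implicitly uses that $\Delta$ is a finite complex, so that every face lies in a facet, and one also wants the remark that membership of a monomial in a monomial ideal depends only on its support. Finally, to confirm that the displayed intersection is genuinely \emph{the} (irredundant) primary decomposition and not merely an equality of ideals, I would observe that each $P_G$ is prime and that the decomposition is irredundant: for a facet $F$, the monomial $\prod_{i \in F} x_i$ lies in $P_{V \setminus F'}$ for every facet $F' \neq F$ (because $F \not\subseteq F'$) but not in $P_{V \setminus F}$, so no term can be omitted.
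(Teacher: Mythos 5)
Your proof is correct and complete: the paper does not prove this statement but simply cites it from Stanley's book \cite[Thm.\ 5.3.3]{S}, so there is no in-paper argument to compare against. Your reduction of the ideal equality to the set-theoretic fact that a subset of $V$ is a nonface if and only if it meets the complement of every facet is the standard textbook argument, and you correctly supply the irredundancy check (via the monomial $\prod_{i\in F}x_i$ for each facet $F$) needed to upgrade the ideal equality to \emph{the} primary decomposition.
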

\begin{example}\label{SRexm}
Let $\Delta$ be a simplicial complex on the set $V = \{ 1 \vvirg 4\}$ of four vertices, with maximal faces given by the edge $[12]$ and the triangle $[234]$. The non-faces of $\Delta$ are the edges $[13],[14]$, the triangles $[123], [124], [134]$, and the simplex $[1234]$. In particular, the minimal non-faces are two edges $[13],[14]$, and the Stanley-Reisner ideal is 
\[
  I_\Delta = \langle x_1x_3, x_1x_4 \rangle.
\]
The minimal cofaces of $\Delta$ are the complement of its maximal faces, that is $\CF(\Delta) = \{ \bar{[234]}, \bar{[12]} \} = \{ [1],[34]\}$. The primary decomposition of $I_\Delta$ is 
  \begin{equation}\label{PDSR}
    I_\Delta = \langle x_1\rangle \cap \langle x_3,x_4\rangle. 
  \end{equation}
  as predicted by \autoref{primaryDecompSR}.
\end{example}
By \autoref{primaryDecompSR}, the primary components of a square-free monomial ideal are minimally generated by a subset of the variables. Therefore, the product of the minimal generators is a square-free monomial. The ideal generated by such monomials (one for each primary component) is called the \emph{monomialization} of the primary decomposition of $I_\Delta$. This resulting monomial ideal is closely related to $I_\Delta$. To explain this relation, we introduce the \emph{Alexander dual} of a simplicial complex.
\begin{defn}\label{Adual}
Let $\Delta$ be a simplicial complex. The Alexander dual $\Delta^\vee$ of $\Delta$ is the simplicial complex
\[
\Delta^\vee = \{\tau \mid \overline{\tau} \not\in \Delta \}
\]
where $\overline{\tau}$ is the complement of $\tau$ in $V$, that is $\bar{\tau} =V \setminus \tau$. 
\end{defn}

The next result is fundamental. We refer to \cite{ER} or \cite{H} for a proof.
\begin{thm}\label{AlexPrimary}
The monomialization of the primary decomposition of $I_\Delta$ is the Stanley-Reisner ideal $I_{\Delta^\vee}$. 
\end{thm}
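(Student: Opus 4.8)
The plan is to show that the two ideals have literally the same minimal generating set, by matching their generators under set complementation. Throughout, for a subset $S \subseteq V$ write $x_S := \prod_{i \in S} x_i$, and recall that an ideal generated by squarefree monomials is minimally generated by those $x_S$ for which $S$ is minimal, under inclusion, among the supports of its squarefree generators.

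First I would unwind the left-hand side of the claimed equality. By \autoref{primaryDecompSR}, the associated primes of $I_\Delta$ are exactly the monomial primes $\langle x_{i_1}, \ldots, x_{i_k}\rangle$ with $[v_{i_1}\cdots v_{i_k}] \in \CF(\Delta)$, and $\CF(\Delta)$ consists precisely of the complements $\bar F = V \setminus F$ of the facets (maximal faces) $F$ of $\Delta$. Hence, by definition, the monomialization of the primary decomposition of $I_\Delta$ is the ideal
\[
J := \big\langle\, x_{\bar F} \ :\ F \text{ a facet of } \Delta \,\big\rangle.
\]
Moreover the monomials $\{x_{\bar F}\}_{F \text{ facet}}$ form an antichain: if $F,F'$ are facets with $\bar F \subseteq \bar{F'}$ then $F \supseteq F'$, so $F = F'$ by maximality. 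Thus these monomials are exactly the minimal generators of $J$.

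Next I would unwind the right-hand side. By \autoref{Adual} and \autoref{SRing}, the ideal $I_{\Delta^\vee}$ is generated by the squarefree monomials $x_\sigma$ with $\sigma \notin \Delta^\vee$, hence is minimally generated by the $x_\sigma$ with $\sigma$ a minimal non-face of $\Delta^\vee$. By definition of the Alexander dual, $\sigma \notin \Delta^\vee$ if and only if $\bar\sigma \in \Delta$; and since complementation reverses inclusions, $\sigma$ is minimal among the sets $\rho$ with $\bar\rho \in \Delta$ exactly when $\bar\sigma$ is maximal among the faces of $\Delta$, i.e.\ when $\bar\sigma$ is a facet. (One checks that this condition indeed forces every proper subset of $\sigma$ to lie in $\Delta^\vee$: if $\tau \subsetneq \sigma$ then $\bar\tau \supsetneq \bar\sigma$, and maximality of $\bar\sigma$ gives $\bar\tau \notin \Delta$; so $\sigma$ is genuinely a minimal non-face.) Therefore the minimal generators of $I_{\Delta^\vee}$ are precisely the monomials $x_{\bar F}$ with $F$ a facet of $\Delta$, which is the same list that generates $J$ above; hence $J = I_{\Delta^\vee}$, as claimed.

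The argument is essentially a dictionary translation, so I do not expect a substantive obstacle. The only points requiring care are the order-reversal in the correspondence ``minimal non-face of $\Delta^\vee$ $\longleftrightarrow$ facet of $\Delta$'' and the compatibility of passing to minimal generators on both sides with complementation. It is also worth disposing of the degenerate cases --- the void complex, the complex $\{\emptyset\}$, and the full simplex $2^V$ --- directly, since there the identity reduces to matching the standard conventions for $I_\Delta$ and for $\Delta^\vee$ in those boundary situations.
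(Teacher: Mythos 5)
The paper does not actually prove this statement; it states it as a known result and refers to \cite{ER} and \cite{H} for a proof. So there is no argument in the paper to compare against, and the right question is simply whether your argument is correct.

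It is. You correctly use \autoref{primaryDecompSR} to identify the associated primes of $I_\Delta$ with the prime ideals indexed by minimal cofaces, you correctly observe that minimal cofaces are exactly complements of facets, and your order-reversal step --- ``$\sigma$ is a minimal non-face of $\Delta^\vee$ if and only if $\bar\sigma$ is a facet of $\Delta$'' --- is carried out carefully: $\sigma \notin \Delta^\vee$ iff $\bar\sigma \in \Delta$, and the proper subsets of $\sigma$ lying in $\Delta^\vee$ is precisely the condition that every proper superset of $\bar\sigma$ fails to be in $\Delta$, i.e.\ maximality. The antichain observation correctly identifies both generating sets as minimal, so the two ideals are equal because they have the same minimal monomial generators. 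Your flag about degenerate boundary cases (void complex, $\{\emptyset\}$, full simplex) is appropriate; in those cases one side or the other may be the unit ideal or the zero ideal, and matching them is a matter of convention rather than content. This is essentially the standard combinatorial proof one finds in Eagon--Reiner or Miller--Sturmfels, and it closes the gap the paper leaves to the references.
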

\begin{example}\label{SRexm3} Recall the simplicial complex $\Delta$ of \autoref{SRexm}, which is the complex with maximal faces $\{ [12],[234]\}$. The nonfaces of $\Delta$ are
\[
\{[13], [14],[123],[124], [134], [1234]\},
\]
whose complements
\[
\{ [24], [23],[4],[3],[2], \emptyset\}, 
\]
define the faces of $\Delta^\vee$. Hence, the maximal faces of $\Delta^\vee$ are the edges $[23]$ and $[24]$. Therefore, the nonfaces of $\Delta^\vee$ are $\{ [1], [34]\}$ yielding the associated Stanley-Reisner ideal $I_{\Delta^\vee} = \langle x_1,x_3x_4\rangle$. Observe, that this is exactly the monomialization of the primary decomposition of $I_\Delta$ in \autoref{PDSR}, as predicted by \autoref{AlexPrimary}. In fact, the generators of $I_{\Delta^\vee}$ can be obtained directly from the maximal faces of $\Delta$ as follows: let $P = \prod_1^N x_i$ be the monomial obtained by multiplying all the variables in the polynomial ring of interest; then
\[
I_{\Delta^\vee} = \left( P /( x_{j_1} \cdots x_{j_k}) : [j_1 \vvirg j_k]  \in \Delta \right).
\]
\end{example}

Our goal is to realize the Betti numbers as ranks of certain restrictions of the homology groups of  the homology group of the Stanley-Reisner complex $\Delta_{P_{2 \times n}}$ associated to the ideal $\init(P_{2 \times n})$. To do this, we consider the \emph{fine grading} of the polynomial ring, that is the $\Z^\nu$-grading on $R=\k[x_1,\ldots,x_\nu]$ defined by $\deg(x_i)=\bfe_i$. In this setting \emph{Hochster's formula} (see \autoref{Hochster}) relates the multigraded Betti numbers of a squarefree monomial ideal to the homology of the corresponding simplicial complex.

In order to keep track of the grading, we use the following characterization of the Betti numbers. Regard $\bbK$ as a trivial $S$-module: the copy of $\bbK$ in the degree $0$ component of $S$ acts by multiplication, whereas all positive degree components act by multiplication by $0$. Tensoring the minimal free resolution of a module $M$ by $\bbK$ provides a complex $F_\bullet \otimes_S \bbK$ of $S$-modules; the minimality of the resolution guarantees that all differentials are identically $0$. The homology groups of this complex are graded vector spaces, and the graded components are denoted by 
\[
\Tor_i(M, \bbK)_{j} = (F_i \otimes \bbK)_j 
\]
where the subscript $j$ denotes the homogeneous component of degree $j$. In particular, $b_{ij} = \dim \Tor_i(M, \bbK)_{j}$. In fact, if the polynomial ring is endowed with a different grading, the $\Tor$-modules are graded with respect to such grading as well. With this notation, we can state \emph{Hochster's formula}:
\begin{thm}[{\cite[Cor.~5.12]{MS}}, \cite{Reisner}]\label{Hochster}
Let $\Delta$ be a simplicial complex on $\nu$ vertices. Then the nonzero Betti numbers of $R/I_{\Delta}$ satisfy
\[
\Tor_i (R/I_{\Delta}, \bbK)_\sigma \simeq \tilde{\rmH}^{|\sigma|-i-1}(\Delta |_{\sigma},\mathbb{K}),
\]
and lie only in squarefree degrees $\sigma \in \Z^\nu$. In particular $\Tor_i (R/I_{\Delta}, \bbK)_\sigma = 0$ if $\sigma \notin \{ 0,1\}^\nu$.
 \end{thm}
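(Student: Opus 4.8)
The plan is to compute $\Tor$ by resolving the residue field instead of $R/I_\Delta$. I would take $K_\bullet = \bigwedge^\bullet R^\nu$, the Koszul complex on $x_1 \vvirg x_\nu$, which is the minimal free resolution of $\bbK$ over $R$, so that $\Tor_i(R/I_\Delta,\bbK) = H_i(K_\bullet \otimes_R R/I_\Delta)$. Denoting by $e_S$ ($S\subseteq\{1 \vvirg \nu\}$) the standard basis of $\bigwedge^\bullet R^\nu$, placed in homological degree $|S|$ and fine degree $\mathbf{1}_S := \sum_{i\in S}\bfe_i$, the complex $K_\bullet\otimes_R R/I_\Delta$ is $\Z^\nu$-graded, and its degree-$\sigma$ strand $C^\sigma_\bullet$ has $\bbK$-basis $\{e_S : S\subseteq\supp\sigma,\ x^{\sigma-\mathbf{1}_S}\notin I_\Delta\}$; its differential sends $e_S$ to $\sum_{s\in S}\pm\, e_{S\setminus s}$, where the $s$-term is read as zero when $x^{\sigma-\mathbf{1}_{S\setminus s}}\in I_\Delta$ (the Koszul coefficient $x_s$ having become the canonical generator of $(R/I_\Delta)_{\sigma-\mathbf{1}_{S\setminus s}}$, or $0$). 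Because each $H_i(C^\sigma_\bullet)$ is a finite-dimensional $\bbK$-space, the statement reduces to: (i) $C^\sigma_\bullet$ is acyclic whenever $\sigma\notin\{0,1\}^\nu$, and (ii) for squarefree $\sigma$, $H_i(C^\sigma_\bullet)\cong\tilde\rmH^{\,|\sigma|-i-1}(\Delta|_\sigma;\bbK)$.

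For (i): if some coordinate of $\sigma$ is negative then $C^\sigma_\bullet=0$, so assume $\sigma\geq 0$ and fix $j$ with $\sigma_j\geq 2$. Split $C^\sigma_p = A_p\oplus B_p$ according to whether $j\in S$ or $j\notin S$. The crucial observation is that, $I_\Delta$ being squarefree, a monomial lies in $I_\Delta$ iff its radical does; since $\sigma_j\geq 2$, multiplying or dividing $x^{\sigma-\mathbf{1}_S}$ by $x_j$ (for $j\notin S$) leaves the $x_j$-exponent at least $1$, hence does not change the radical, so $e_S$ is a basis vector iff $e_{S\cup j}$ is. Thus $S\mapsto S\cup j$ identifies the basis of $B_\bullet$ with the basis of $A_{\bullet+1}$, and inspecting the differential shows that $C^\sigma_\bullet$ is, up to sign, the mapping cone of this identification; the mapping cone of an isomorphism is acyclic, so $H_i(C^\sigma_\bullet)=0$. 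This proves (i), and in particular the concentration of $\Tor$ in squarefree degrees.

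For (ii): identify a squarefree $\sigma$ with $W:=\supp\sigma$ and put $w:=|W|=|\sigma|$. Then $S\subseteq W$ and $x^{\sigma-\mathbf{1}_S}=x^{\mathbf{1}_{W\setminus S}}$, so $e_S$ is a basis vector of $C^\sigma_\bullet$ exactly when $W\setminus S$ is a face of $\Delta|_W$. Reindexing by $T:=W\setminus S$ (so $|T|=w-p$ in homological degree $p$), the strand $C^\sigma_\bullet$ becomes, degreewise, the reduced simplicial cochain group $\tilde C^{\,w-p-1}(\Delta|_W;\bbK)$ — the empty face $T=\varnothing$ contributing the $(-1)$-cochains, which is what produces reduced cohomology — and, after carrying $x_s$ across and reindexing, a term $x_s\,e_{S\setminus s}$ of the Koszul differential turns into $\pm\, f_{T\cup s}$. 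So, up to sign, the differential of $C^\sigma_\bullet$ in homological degree $p$ is the simplicial coboundary $\tilde C^{\,w-p-1}(\Delta|_W)\to\tilde C^{\,w-p}(\Delta|_W)$; passing to homology yields $\Tor_i(R/I_\Delta,\bbK)_\sigma=H_i(C^\sigma_\bullet)\cong\tilde\rmH^{\,|\sigma|-i-1}(\Delta|_\sigma;\bbK)$, which together with (i) is the assertion.

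I expect the only genuine difficulty to be the sign bookkeeping — choosing the signs in the Koszul differential so that the ``mapping cone of an isomorphism'' claim in (i) is literally correct, and so that the reindexed differential in (ii) matches the standard orientation conventions for the simplicial coboundary of $\Delta|_W$. A minor but real subtlety is the handling of degenerate complexes (the void complex versus $\{\varnothing\}$) together with the conventions $\tilde\rmH^{-1}(\{\varnothing\};\bbK)=0$ and $\tilde\rmH^{-1}(\{\};\bbK)=\bbK$, which are exactly what makes the empty-face term in (ii) give reduced rather than unreduced cohomology. An alternative to the reindexing in (ii) is to recognize $C^\sigma_\bullet$ as a shift of the chain complex of the Alexander dual of $\Delta|_W$ inside the simplex on $W$ and invoke combinatorial Alexander duality (in the spirit of \autoref{AlexPrimary}); the direct reindexing above avoids that.
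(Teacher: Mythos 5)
The paper itself does not prove this statement; it is cited as \cite[Cor.~5.12]{MS} and \cite{Reisner}, so there is no in-paper argument to compare against. Your argument is the standard Koszul-resolution proof of Hochster's formula (essentially the one given in Miller--Sturmfels): compute $\Tor$ from $K_\bullet \otimes_R R/I_\Delta$, pass to the $\Z^\nu$-graded strands, kill the strands with a coordinate $\geq 2$ via the radicality of $I_\Delta$ and the resulting contraction, and recognize the strand of a squarefree $\sigma$ as the reduced simplicial cochain complex of $\Delta|_{\supp\sigma}$ under the complementation $S \mapsto \supp\sigma \setminus S$. This is correct, and the only genuine care required is indeed the sign bookkeeping you flag; as an alternative to the mapping-cone phrasing in step (i), the explicit contracting homotopy $h(e_S)=\pm e_{S\cup j}$ for $j\notin S$ (and $h(e_S)=0$ for $j\in S$), which satisfies $\partial h + h\partial = \id$ on the strand, may be easier to verify sign-free.

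One small slip worth correcting: in your parenthetical about degenerate complexes, the conventions are reversed. The \emph{irrelevant} complex $\{\varnothing\}$, whose only face is the empty face, has $\tilde C^{-1}=\bbK$ and nothing else, so $\tilde{\rmH}^{-1}(\{\varnothing\};\bbK)=\bbK$; the \emph{void} complex, with no faces at all, has all reduced cochain groups zero, so $\tilde{\rmH}^{-1}$ of it is $0$. (This is exactly what makes $\Tor_0(R/I_\Delta,\bbK)_0 \cong \bbK$ come out right for $\sigma = 0$.) It does not affect the body of your proof, which correctly places the empty face $T=\varnothing$ as the generator of $\tilde C^{-1}(\Delta|_W)$.
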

We also record a result that relates the regularity of a Stanley-Reisner ideal with the projective dimension of its Alexander dual.
\begin{thm}[{\cite[Cor.~2.9]{BCP}}]\label{regpd}
The regularity of $I_\Delta$ equals the projective dimension of $R/I_{\Delta^\vee}$.
\end{thm}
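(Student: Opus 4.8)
The plan is to express both invariants in terms of reduced cohomology of small subcomplexes of $\Delta$, using Hochster's formula (\autoref{Hochster}), and then to match them via combinatorial Alexander duality. Throughout, $V$ denotes the vertex set, $|V|=\nu$, and over the field $\mathbb{K}$ we identify $\tilde{\rmH}^{k}$ with $\tilde{\rmH}_{k}$. \emph{Regularity side.} Comparing the minimal free resolutions of $I_\Delta$ and of $R/I_\Delta$ gives $b_{i,j}(I_\Delta)=b_{i+1,j}(R/I_\Delta)$, so \autoref{Hochster} yields $b_{i,\sigma}(I_\Delta)=\dim_{\mathbb K}\tilde{\rmH}^{|\sigma|-i-2}(\Delta|_\sigma,\mathbb K)$ for squarefree $\sigma$. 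A nonzero class in $\tilde{\rmH}^{\ell}(\Delta|_\sigma,\mathbb K)$ forces $|\sigma|\geq\ell+2$, so it contributes a Betti number in homological position with $j-i=\ell+2$ and $i=|\sigma|-\ell-2\geq 0$; conversely every nonzero Betti number of $I_\Delta$ arises this way. Hence
\[
\reg(I_\Delta)\;=\;2+t(\Delta),\qquad t(\Delta):=\max\bigl\{\,\ell\;:\;\tilde{\rmH}^{\ell}(\Delta|_\sigma,\mathbb K)\neq 0\ \text{ for some }\sigma\subseteq V\,\bigr\}.
\]

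\emph{Projective-dimension side.} Here the restrictions of $\Delta^\vee$ must be rewritten as links of $\Delta$. The key elementary identity is
\[
(\Delta^\vee)|_\sigma\;=\;\bigl(\operatorname{link}_\Delta(V\setminus\sigma)\bigr)^{\vee},
\]
with the Alexander dual on the right taken relative to the vertex set $\sigma$; it follows at once by unravelling the definitions of restriction, link and Alexander dual. Substituting this into \autoref{Hochster} for $I_{\Delta^\vee}$ and applying combinatorial Alexander duality $\tilde{\rmH}^{k}(\Gamma^{\vee},\mathbb K)\cong\tilde{\rmH}_{m-k-3}(\Gamma,\mathbb K)$, valid for a complex $\Gamma$ on $m$ vertices with dual taken relative to those vertices (see e.g. \cite{MS}), gives $b_{i,\sigma}(R/I_{\Delta^\vee})=\dim_{\mathbb K}\tilde{\rmH}^{\,i-2}\bigl(\operatorname{link}_\Delta(V\setminus\sigma),\mathbb K\bigr)$. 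As $\sigma$ runs over subsets of $V$, so does $V\setminus\sigma$, whence
\[
\pdim(R/I_{\Delta^\vee})\;=\;2+s(\Delta),\qquad s(\Delta):=\max\bigl\{\,p\;:\;\tilde{\rmH}^{p}\bigl(\operatorname{link}_\Delta(\rho),\mathbb K\bigr)\neq 0\ \text{ for some }\rho\subseteq V\,\bigr\},
\]
with $\operatorname{link}_\Delta(\rho)$ understood to be void, hence acyclic, when $\rho\notin\Delta$.

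\emph{The combinatorial core.} It now suffices to prove $t(\Delta)=s(\Delta)$, i.e. that the top degree carrying reduced cohomology is the same over induced subcomplexes and over links. I would prove this by induction on $\nu$ from the Mayer--Vietoris sequence of the decomposition $\Delta=\operatorname{star}_\Delta(v)\cup\operatorname{del}_\Delta(v)$: here $\operatorname{star}_\Delta(v)$ is a cone, hence acyclic; $\operatorname{del}_\Delta(v)=\Delta|_{V\setminus v}$ is an induced subcomplex; and $\operatorname{star}_\Delta(v)\cap\operatorname{del}_\Delta(v)=\operatorname{link}_\Delta(v)$. Exactness forces a nonzero reduced cohomology class of any one of these three complexes to be detected, in the same degree or one higher, by another of them; one then passes freely between restrictions and links using $(\operatorname{link}_\Delta v)|_\sigma=\operatorname{link}_{\Delta|_{\sigma\cup v}}(v)$ and $\operatorname{link}_{\operatorname{link}_\Delta v}(\rho)=\operatorname{link}_\Delta(\rho\cup v)$, and the induction delivers both inequalities $t(\Delta)\leq s(\Delta)$ and $s(\Delta)\leq t(\Delta)$. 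Combined with the two displays this gives $\reg(I_\Delta)=\pdim(R/I_{\Delta^\vee})$.

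\emph{Expected obstacle.} The first two steps are bookkeeping with Hochster's formula and Alexander duality; the real difficulty is the last one, where the Mayer--Vietoris recursion must be set up so that the controlling parameter --- the size of the support $\sigma$ of a restriction, or of the face $\rho$ defining a link --- changes monotonically at each step, guaranteeing termination, while the cohomological degree is never allowed to decrease. Should that elementary induction prove awkward, a more structural substitute is the Alexander-duality functor on squarefree modules (equivalently graded local duality), under which the minimal free resolution of $R/I_{\Delta^\vee}$ is dual to a complex computing the local cohomology of $R/I_\Delta$; the equality $\reg=\pdim$ then becomes the standard comparison of these invariants across local duality.
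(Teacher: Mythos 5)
The paper gives no proof of this statement, citing \cite{BCP} directly, so I am assessing your argument on its own merits against the standard proof of Terai's theorem underlying \cite[Cor.~2.9]{BCP}. Your first two steps are correct and well executed: Hochster's formula gives $\reg(I_\Delta)=2+t(\Delta)$, and your identity $(\Delta^\vee)|_\sigma=(\operatorname{link}_\Delta(V\setminus\sigma))^\vee$ together with combinatorial Alexander duality correctly converts Hochster's formula for $I_{\Delta^\vee}$ into $\pdim(R/I_{\Delta^\vee})=2+s(\Delta)$. The gap is exactly where you flag it: the combinatorial equality $t(\Delta)=s(\Delta)$ is asserted, not proved, and it is where the entire content of the theorem resides.

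Your one-paragraph Mayer--Vietoris sketch is too loose to count as a proof, and ``one then passes freely between restrictions and links'' hides the real work. The two inequalities want genuinely different inductions. For $s(\Delta)\le t(\Delta)$ one inducts on $|\rho|$: peel a vertex $v$ off the face defining the link, apply the inductive hypothesis to $\operatorname{link}_\Delta(v)$ to find a nonzero $\tilde{\rmH}^{\ell}\bigl((\operatorname{link}_\Delta v)|_\tau\bigr)=\tilde{\rmH}^{\ell}\bigl(\operatorname{link}_{\Delta|_{\tau\cup v}}v\bigr)$, and then the exactness of $\tilde{\rmH}^{\ell}(\Delta|_\tau)\to\tilde{\rmH}^{\ell}(\operatorname{link}_{\Delta|_{\tau\cup v}}v)\to\tilde{\rmH}^{\ell+1}(\Delta|_{\tau\cup v})$ forces a restriction of $\Delta$ to carry cohomology in degree $\ge\ell$. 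For $t(\Delta)\le s(\Delta)$ one inducts on $\nu$, but the inductive step on $\Delta|_{V\setminus v}$ produces a nonzero $\tilde{\rmH}$ of a \emph{link of the deletion}, which is $\operatorname{link}_\Delta(\rho)\setminus v$, not a link of $\Delta$; a second Mayer--Vietoris, this time applied inside $\operatorname{link}_\Delta(\rho)$, is needed to push the class onto $\operatorname{link}_\Delta(\rho)$ or $\operatorname{link}_\Delta(\rho\cup v)$. None of this is in your sketch, and a careless setup makes the two directions circular. The cleaner route that you mention as a fallback --- local duality --- is in fact the standard one: Hochster's formula for local cohomology, $\rmH^i_{\m}(R/I_\Delta)_{-\sigma}\cong\tilde{\rmH}^{\,i-|\sigma|-1}(\operatorname{link}_\Delta\sigma;\KK)$ for $\sigma\in\Delta$, together with the local-cohomology characterization of regularity, gives $\reg(I_\Delta)=2+s(\Delta)$ directly in terms of links, so combining it with your second display proves the theorem and avoids the comparison $t(\Delta)=s(\Delta)$ altogether.
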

Using these facts, we are able to prove the main theorem of this section.
\begin{thm}\label{CMreg}
The regularity of $R/\init(P_{2 \times n})$ is at most three. 
\end{thm}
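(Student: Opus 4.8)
Because $\init(P_{2\times n})$ is a squarefree monomial ideal, it equals the Stanley--Reisner ideal $I_{\Delta}$ of a unique simplicial complex $\Delta$ on the $2n$ vertices $x_{11},\dots,x_{2n}$, and by Hochster's formula (\autoref{Hochster}) the bound $\reg(R/\init(P_{2\times n}))\le 3$ is equivalent to the vanishing $\tilde{\rmH}^{k}(\Delta|_{\sigma};\k)=0$ for every vertex subset $\sigma$ and every $k\ge 3$. So the plan is to prove this purely topological statement about the induced subcomplexes of $\Delta$.

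First I would read off $\Delta$ from the explicit monomial generators of $\init(P_{2\times n})$ recorded after \autoref{thm: gb permanents}. Writing a candidate face $\sigma$ as $\sigma_1\sqcup\sigma_2$ according to the two rows, and identifying $\sigma_r$ with the corresponding set of column indices, the three families of generators say exactly that $\sigma$ is a face iff (i) every index of $\sigma_1$ is $\le$ every index of $\sigma_2$, (ii) no index of $\sigma_1$ lies strictly below two indices of $\sigma_2$, and (iii) no index of $\sigma_2$ lies strictly above two indices of $\sigma_1$. A short case split on whether $\max\sigma_1<\min\sigma_2$ or $\max\sigma_1=\min\sigma_2$ then shows that the facets of $\Delta$ are exactly the two ``row simplices'' on $\{x_{11},\dots,x_{1n}\}$ and on $\{x_{21},\dots,x_{2n}\}$, together with the triangles $\{x_{1j},x_{1a},x_{2a}\}$ for $j<a$ and $\{x_{1a},x_{2a},x_{2k}\}$ for $a<k$; in particular every face of $\Delta$ is contained in a single row, or is an edge $\{x_{1a},x_{2b}\}$ with $a\le b$, or is a subface of one of these triangles. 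This is the main computational step, and it is routine.

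The topological input is then the following description of $\Delta|_{\sigma}$ for an arbitrary $\sigma=\sigma_1\sqcup\sigma_2$: it is built, as a CW complex, from the \emph{disjoint} union of the two simplices on $\sigma_1$ and $\sigma_2$ by first attaching the edges $\{x_{1a},x_{2b}\}$ with $a\le b$ and both endpoints in $\sigma$ (each glued along its two endpoints), and then attaching the triangles from the previous step that happen to lie in $\sigma$ (each glued along a triangle of edges that, one checks, is already present). The disjoint union of two simplices has reduced cohomology in degree $0$ only, and attaching cells of dimension $\le 2$ leaves $\tilde{\rmH}^{k}$ unchanged for $k\ge 3$; hence $\tilde{\rmH}^{k}(\Delta|_{\sigma};\k)=0$ for all $\sigma$ and all $k\ge 3$. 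Feeding this into \autoref{Hochster} gives $b_{i,i+j}(R/\init(P_{2\times n}))=0$ whenever $j\ge 4$, i.e. $\reg(R/\init(P_{2\times n}))\le 3$.

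I expect the only real difficulty to be the bookkeeping around the face structure of $\Delta$. The tempting shortcut is to write $\Delta$ as a union of two contractible pieces --- each row simplex together with the triangles attached along its edges --- and run a Mayer--Vietoris argument; this fails because those pieces are no longer contractible after restriction to a subset $\sigma$. The cell-attachment description above avoids this, since all it uses is that every triangle of $\Delta$ is $2$-dimensional and that every edge joining the two rows is glued onto vertices of the row simplices. (Alternatively, one can package the same data through Alexander duality: by \autoref{AlexPrimary} and \autoref{regpd}, $\reg(\init(P_{2\times n}))=\pdim(R/I_{\Delta^{\vee}})$, and the facet computation above yields the generators of $I_{\Delta^{\vee}}$ explicitly, but the induced-subcomplex computation is the most direct route.)
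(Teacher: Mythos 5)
Your argument is correct and takes a genuinely different route from the paper's. Both proofs begin by reading off the same facet description of $\Delta$ from the generators of $\init(P_{2\times n})$: the two row simplices $\Delta_1,\Delta_2$ together with the $n(n-1)$ triangles $\{x_{1i},x_{2i},x_{2j}\}$ and $\{x_{1i},x_{1j},x_{2j}\}$, $i<j$. The paper then passes to the Alexander dual and invokes \autoref{regpd}: the generators of $I_{\Delta^\vee}$ are squarefree monomials of degree $n$ or $2n-3$, and since by \autoref{Hochster} the nonzero Betti numbers of $R/I_{\Delta^\vee}$ occur only in squarefree multidegrees (hence of total degree at most $2n$), the resolution has length at most $4$. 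You instead apply \autoref{Hochster} directly to $\Delta$, reducing the claim to the purely topological vanishing $\tilde{\rmH}^{k}(\Delta|_{\sigma};\k)=0$ for every $\sigma$ and every $k\ge 3$, and establish this by exhibiting $\Delta|_{\sigma}$ as the disjoint union of the two (contractible) induced row simplices with $1$-cells and then $2$-cells attached; since attaching cells of dimension $\le 2$ cannot create cohomology in degree $\ge 3$, the vanishing is immediate. Your route is more elementary--it sidesteps Alexander duality entirely--and, as you rightly observe, the cell-attachment bookkeeping is stable under restriction to an arbitrary $\sigma$, precisely where a naive Mayer--Vietoris decomposition into two contractible pieces would fail. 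The paper's duality argument is terser once \autoref{AlexPrimary} and \autoref{regpd} are in hand, but it is less transparent about the geometric source of the bound. Both are sound proofs of \autoref{CMreg}.
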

\begin{proof}
Since $\init(P_{2 \times n})$ is a squarefree monomial ideal, it is the Stanley-Reisner ideal of a simplicial complex $\Delta$. Since $\reg(R/I) = \reg(I)-1$, by \autoref{regpd} it suffices to show 
that $\pdim(R/I_{\Delta^\vee})\le 4$. 

From the description of the monomials defining $\init(P_{2 \times n})$, we can deduce that $\Delta$ is the simplicial complex on the vertex set $\{ x_{11} \vvirg x_{2n}\}$ whose maximal faces are described as follows:
\begin{itemize}[leftmargin=*]
    \item two $(n-1)$-simplices $\Delta_1$ and $\Delta_2$ on the sets $\{x_{11},\ldots, x_{1n}\}$ and $\{x_{21},\ldots, x_{2n}\}$, respectively;
    \item triangles of the form $\{x_{1i},x_{2i},x_{2j}\}$ and $\{x_{1i},x_{1j},x_{2j}\}$, with $i < j$.
\end{itemize} 
A representation of $\Delta$ in the case $n=3$ is given in \autoref{fig:Delta3}. In particular the maximal faces of $\Delta$
are $\Delta_1$, $\Delta_2$, and $n(n-1)$ triangles. Let $P = x_{11} \cdots x_{1n}x_{21} \cdots x_{2n}$ be the product of variables. The ideal $I_{\Delta^\vee}$ is generated by $x_{11} \cdots x_{1n}$, $x_{21} \cdots x_{2n}$ and $n(n-1)$ monomials
$P/x_{1i}x_{2i}x_{2j}$, $P/x_{1i}x_{1j}x_{2j}$.

By \autoref{Hochster}, the nonzero Betti numbers of $I_{\Delta^\vee}$ only occur in squarefree multidegrees, so the only syzygies involving the generators $P/x_{1i}x_{2i}x_{2j}$, $P/x_{1i}x_{1j}x_{2j}$ stop in projective dimension at most $3$; the only other generators are 
$x_{11} \cdots x_{1n}$, $x_{21} \cdots x_{2n}$, which are a complete intersection, and their only syzygy is the Koszul syzygy in projective dimension two.\end{proof}

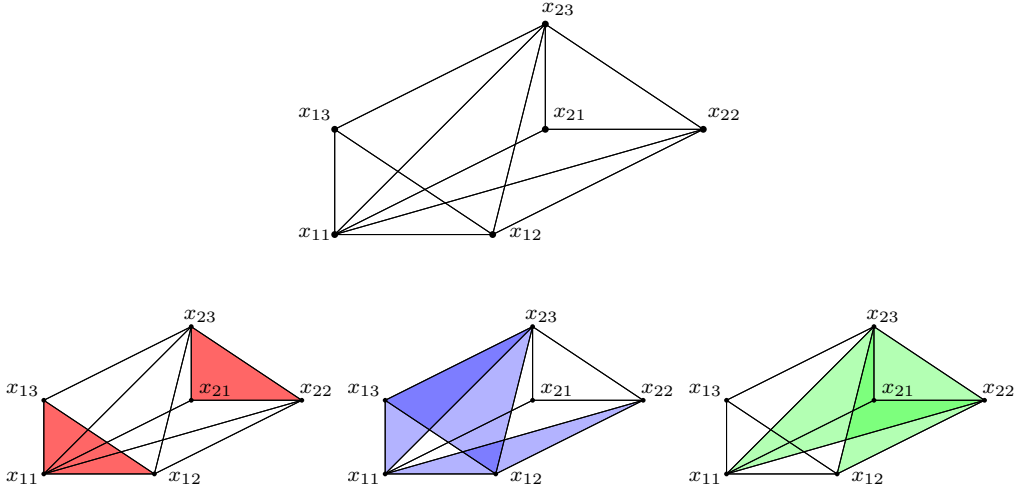
\begin{figure}
    \centering
    \begin{tikzpicture}[line cap=round,line join=round,>=triangle 45,x=.7cm,y=.7cm]
\clip(-2,-2) rectangle (7,4);
\draw [color=black] (-1,-1)-- (2,-1);
\draw [color=black] (2,-1)-- (-1,1);
\draw [color=black] (-1,1)-- (-1,-1);
\draw [color=black] (3,1)-- (6,1);
\draw [color=black] (6,1)-- (3,3);
\draw [color=black] (3,3)-- (3,1);
\draw [color=black] (-1,-1)-- (2,-1);
\draw [color=black] (2,-1)-- (6,1);
\draw [color=black] (6,1)-- (-1,-1);
\draw [color=black] (-1,-1)-- (-1,1);
\draw [color=black] (-1,1)-- (3,3);
\draw [color=black] (3,3)-- (-1,-1);
\draw [color=black] (2,-1)-- (-1,1);
\draw [color=black] (-1,1)-- (3,3);
\draw [color=black] (3,3)-- (2,-1);
\draw [color=black] (-1,-1)-- (3,1);
\draw [color=black] (3,1)-- (6,1);
\draw [color=black] (6,1)-- (-1,-1);
\draw [color=black] (2,-1)-- (6,1);
\draw [color=black] (6,1)-- (3,3);
\draw [color=black] (3,3)-- (2,-1);
\draw [color=black] (-1,-1)-- (3,1);
\draw [color=black] (3,1)-- (3,3);
\draw [color=black] (3,3)-- (-1,-1);
\begin{scriptsize}
\fill [color=black] (3,1) circle (1.3pt);
\draw[color=black,anchor= west] (3,1.29) node {$x_{21}$};
\fill [color=black] (-1,-1) circle (1.3pt);
\draw[color=black,anchor=north east] (-0.9,-0.73) node {$x_{11}$};
\fill [color=black] (2,-1) circle (1.3pt);
\draw[color=black,anchor=north west] (2.17,-0.73) node {$x_{12}$};
\fill [color=black] (-1,1) circle (1.3pt);
\draw[color=black,anchor=east] (-0.9,1.29) node {$x_{13}$};
\fill [color=black] (3,3) circle (1.3pt);
\draw[color=black] (3.25,3.28) node {$x_{23}$};
\fill [color=black] (6,1) circle (1.3pt);
\draw[color=black] (6.4,1.29) node {$x_{22}$};
\end{scriptsize}
\end{tikzpicture}\\
\begin{tikzpicture}[line cap=round,line join=round,>=triangle 45,x=.7cm,y=.7cm,scale=.7]
\clip(-2,-2) rectangle (7,4);
\fill[color=red,fill=red,fill opacity=0.6] (-1,-1) -- (2,-1) -- (-1,1) -- cycle;
\fill[color=red,fill=red,fill opacity=0.6] (3,1) -- (6,1) -- (3,3) -- cycle;
\draw [color=black] (-1,-1)-- (2,-1);
\draw [color=black] (2,-1)-- (-1,1);
\draw [color=black] (-1,1)-- (-1,-1);
\draw [color=black] (3,1)-- (6,1);
\draw [color=black] (6,1)-- (3,3);
\draw [color=black] (3,3)-- (3,1);
\draw [color=black] (-1,-1)-- (2,-1);
\draw [color=black] (2,-1)-- (6,1);
\draw [color=black] (6,1)-- (-1,-1);
\draw [color=black] (-1,-1)-- (-1,1);
\draw [color=black] (-1,1)-- (3,3);
\draw [color=black] (3,3)-- (-1,-1);
\draw [color=black] (2,-1)-- (-1,1);
\draw [color=black] (-1,1)-- (3,3);
\draw [color=black] (3,3)-- (2,-1);
\draw [color=black] (-1,-1)-- (3,1);
\draw [color=black] (3,1)-- (6,1);
\draw [color=black] (6,1)-- (-1,-1);
\draw [color=black] (2,-1)-- (6,1);
\draw [color=black] (6,1)-- (3,3);
\draw [color=black] (3,3)-- (2,-1);
\draw [color=black] (-1,-1)-- (3,1);
\draw [color=black] (3,1)-- (3,3);
\draw [color=black] (3,3)-- (-1,-1);
\begin{scriptsize}
\fill [color=black] (3,1) circle (1.3pt);
\draw[color=black,anchor= west] (3,1.29) node {$x_{21}$};
\fill [color=black] (-1,-1) circle (1.3pt);
\draw[color=black,anchor=north east] (-0.9,-0.73) node {$x_{11}$};
\fill [color=black] (2,-1) circle (1.3pt);
\draw[color=black,anchor=north west] (2.17,-0.73) node {$x_{12}$};
\fill [color=black] (-1,1) circle (1.3pt);
\draw[color=black,anchor=east] (-0.9,1.29) node {$x_{13}$};
\fill [color=black] (3,3) circle (1.3pt);
\draw[color=black] (3.25,3.28) node {$x_{23}$};
\fill [color=black] (6,1) circle (1.3pt);
\draw[color=black] (6.4,1.29) node {$x_{22}$};
\end{scriptsize}
\end{tikzpicture}
\begin{tikzpicture}[line cap=round,line join=round,>=triangle 45,x=.7cm,y=.7cm,scale=.7]
\clip(-2,-2) rectangle (7,4);
\fill[color=blue,fill=blue,fill opacity=0.3] (-1,-1) -- (2,-1) -- (6,1) -- cycle;
\fill[color=blue,fill=blue,fill opacity=0.3] (-1,-1) -- (-1,1) -- (3,3) -- cycle;
\fill[color=blue,fill=blue,fill opacity=0.3] (2,-1) -- (-1,1) -- (3,3) -- cycle;
\draw [color=black] (-1,-1)-- (2,-1);
\draw [color=black] (2,-1)-- (-1,1);
\draw [color=black] (-1,1)-- (-1,-1);
\draw [color=black] (3,1)-- (6,1);
\draw [color=black] (6,1)-- (3,3);
\draw [color=black] (3,3)-- (3,1);
\draw [color=black] (-1,-1)-- (2,-1);
\draw [color=black] (2,-1)-- (6,1);
\draw [color=black] (6,1)-- (-1,-1);
\draw [color=black] (-1,-1)-- (-1,1);
\draw [color=black] (-1,1)-- (3,3);
\draw [color=black] (3,3)-- (-1,-1);
\draw [color=black] (2,-1)-- (-1,1);
\draw [color=black] (-1,1)-- (3,3);
\draw [color=black] (3,3)-- (2,-1);
\draw [color=black] (-1,-1)-- (3,1);
\draw [color=black] (3,1)-- (6,1);
\draw [color=black] (6,1)-- (-1,-1);
\draw [color=black] (2,-1)-- (6,1);
\draw [color=black] (6,1)-- (3,3);
\draw [color=black] (3,3)-- (2,-1);
\draw [color=black] (-1,-1)-- (3,1);
\draw [color=black] (3,1)-- (3,3);
\draw [color=black] (3,3)-- (-1,-1);
\begin{scriptsize}
\fill [color=black] (3,1) circle (1.3pt);
\draw[color=black,anchor= west] (3,1.29) node {$x_{21}$};
\fill [color=black] (-1,-1) circle (1.3pt);
\draw[color=black,anchor=north east] (-0.9,-0.73) node {$x_{11}$};
\fill [color=black] (2,-1) circle (1.3pt);
\draw[color=black,anchor=north west] (2.17,-0.73) node {$x_{12}$};
\fill [color=black] (-1,1) circle (1.3pt);
\draw[color=black,anchor=east] (-0.9,1.29) node {$x_{13}$};
\fill [color=black] (3,3) circle (1.3pt);
\draw[color=black] (3.25,3.28) node {$x_{23}$};
\fill [color=black] (6,1) circle (1.3pt);
\draw[color=black] (6.4,1.29) node {$x_{22}$};
\end{scriptsize}
\end{tikzpicture}
\begin{tikzpicture}[line cap=round,line join=round,>=triangle 45,x=.7cm,y=.7cm,scale=.7]
\clip(-2,-2) rectangle (7,4);
\fill[color=green,fill=green,fill opacity=0.3] (-1,-1) -- (3,1) -- (6,1) -- cycle;
\fill[color=green,fill=green,fill opacity=0.3] (2,-1) -- (6,1) -- (3,3) -- cycle;
\fill[color=green,fill=green,fill opacity=0.3] (-1,-1) -- (3,1) -- (3,3) -- cycle;
\draw [color=black] (-1,-1)-- (2,-1);
\draw [color=black] (2,-1)-- (-1,1);
\draw [color=black] (-1,1)-- (-1,-1);
\draw [color=black] (3,1)-- (6,1);
\draw [color=black] (6,1)-- (3,3);
\draw [color=black] (3,3)-- (3,1);
\draw [color=black] (-1,-1)-- (2,-1);
\draw [color=black] (2,-1)-- (6,1);
\draw [color=black] (6,1)-- (-1,-1);
\draw [color=black] (-1,-1)-- (-1,1);
\draw [color=black] (-1,1)-- (3,3);
\draw [color=black] (3,3)-- (-1,-1);
\draw [color=black] (2,-1)-- (-1,1);
\draw [color=black] (-1,1)-- (3,3);
\draw [color=black] (3,3)-- (2,-1);
\draw [color=black] (-1,-1)-- (3,1);
\draw [color=black] (3,1)-- (6,1);
\draw [color=black] (6,1)-- (-1,-1);
\draw [color=black] (2,-1)-- (6,1);
\draw [color=black] (6,1)-- (3,3);
\draw [color=black] (3,3)-- (2,-1);
\draw [color=black] (-1,-1)-- (3,1);
\draw [color=black] (3,1)-- (3,3);
\draw [color=black] (3,3)-- (-1,-1);
\begin{scriptsize}
\fill [color=black] (3,1) circle (1.3pt);
\draw[color=black,anchor= west] (3,1.29) node {$x_{21}$};
\fill [color=black] (-1,-1) circle (1.3pt);
\draw[color=black,anchor=north east] (-0.9,-0.73) node {$x_{11}$};
\fill [color=black] (2,-1) circle (1.3pt);
\draw[color=black,anchor=north west] (2.17,-0.73) node {$x_{12}$};
\fill [color=black] (-1,1) circle (1.3pt);
\draw[color=black,anchor=east] (-0.9,1.29) node {$x_{13}$};
\fill [color=black] (3,3) circle (1.3pt);
\draw[color=black] (3.25,3.28) node {$x_{23}$};
\fill [color=black] (6,1) circle (1.3pt);
\draw[color=black] (6.4,1.29) node {$x_{22}$};
\end{scriptsize}
\end{tikzpicture}
    \caption{The simplicial complex $\Delta$ for $n=3$. Top: The $1$-skeleton. Bottom: The three types of higher dimensional faces: in red the two simplices $\Delta_1,\Delta_2$, in blue the $2$-faces $x_{1i}x_{1j}x_{2j}$ with $i < j$, in green the $2$-faces $x_{1i}x_{2i}x_{2j}$ with $i < j$.}\label{fig:Delta3}
\end{figure}
The $f$-vector of a simplicial complex $\Delta$ of dimension $N$ is $f(\Delta) = [f_{-1} \vvirg f_N]$, where $f_i$ is the number of $i$-dimensional faces of $\Delta$. It is often convenient to record the $f$-vector in terms of its generating function $f(\Delta,t) = \sum_{k = 0}^{N+1} f_{k-1} t^{N+1-k}$.
\begin{lem}\label{lemma:f vector Delta}
Let $\Delta$ be the simplicial complex associated to $\init(P_{2 \times n})$. Then we have 
\begin{align*} 
f(\Delta)_{-1} &= 1\\
f(\Delta)_0 &= 2n, \\
f(\Delta)_1 &= 2 \textstyle\binom{n}{2} + \textstyle\binom{n+1}{2}, \\
f(\Delta)_2 &=  2\textstyle\binom{n}{3} + 2 \textstyle\binom{n}{2}, \\
f(\Delta)_k &= 2 \textstyle\binom{n}{k+1} \quad \text{for $k = 3 \vvirg n-1$}.
\end{align*}
In particular
\[
f(\Delta,t) = 2(t+1)^n - t^n + \textstyle \binom{n+1}{2} t^{n-2} + 2 \textstyle\binom{n}{2} t^{n-3}.
\]
\end{lem}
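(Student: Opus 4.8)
The plan is to read off the face structure of $\Delta$ directly from the list of maximal faces obtained in the proof of \autoref{CMreg}, and then count faces dimension by dimension. Recall that the maximal faces of $\Delta$ are the two $(n-1)$-simplices $\Delta_1,\Delta_2$ on the disjoint vertex sets $\{x_{11},\ldots,x_{1n}\}$ and $\{x_{21},\ldots,x_{2n}\}$, together with the $n(n-1)$ triangles $\{x_{1i},x_{2i},x_{2j}\}$ and $\{x_{1i},x_{1j},x_{2j}\}$ for $i<j$. I will call a face \emph{pure} if it is contained in $\{x_{11},\ldots,x_{1n}\}$ or in $\{x_{21},\ldots,x_{2n}\}$, and \emph{mixed} otherwise, and count the two types separately.

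First, the pure faces. A pure face is a subset of the vertex set of $\Delta_1$ or of $\Delta_2$, and since these are full simplices every such subset is a face. As the two vertex sets are disjoint, the only pure face they have in common is $\emptyset$; hence the number of pure faces of dimension $k$ is $2\binom{n}{k+1}$ for $k\ge 0$ and $1$ for $k=-1$. In terms of the generating function convention of the statement (with $\dim\Delta=n-1$, so a face of size $s$ contributes $t^{n-s}$), the pure faces contribute $2(1+t)^n - t^n$, the subtracted $t^n$ accounting for the empty face being counted only once.

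Next, the mixed faces. Every face of $\Delta$ is contained in a maximal face; since $\Delta_1$ and $\Delta_2$ contain no mixed subsets, every mixed face is contained in one of the triangles and hence has at most three vertices. It then suffices to list the mixed subsets of the triangles and remove repetitions. The mixed subsets of $\{x_{1i},x_{2i},x_{2j}\}$ ($i<j$) are $\{x_{1i},x_{2i}\}$, $\{x_{1i},x_{2j}\}$ and $\{x_{1i},x_{2i},x_{2j}\}$, while those of $\{x_{1i},x_{1j},x_{2j}\}$ ($i<j$) are $\{x_{1i},x_{2j}\}$, $\{x_{1j},x_{2j}\}$ and $\{x_{1i},x_{1j},x_{2j}\}$. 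Collecting these over all $i<j$, the mixed edges are exactly $\{x_{1i},x_{2i}\}$ for $1\le i\le n$ and $\{x_{1i},x_{2j}\}$ for $i<j$, so there are $n+\binom{n}{2}=\binom{n+1}{2}$ of them; the mixed triangles are exactly the $n(n-1)=2\binom{n}{2}$ listed ones (the two families are disjoint, since a triangle in the first family has two vertices in row $2$ and one in the second has two vertices in row $1$, and within each family distinct pairs $(i,j)$ give distinct sets). Thus the mixed faces contribute $\binom{n+1}{2}t^{n-2} + 2\binom{n}{2}t^{n-3}$.

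Combining the two counts gives $f(\Delta)_{-1}=1$, $f(\Delta)_0=2n$, $f(\Delta)_1=2\binom{n}{2}+\binom{n+1}{2}$, $f(\Delta)_2=2\binom{n}{3}+2\binom{n}{2}$, and $f(\Delta)_k=2\binom{n}{k+1}$ for $3\le k\le n-1$, and adding the two generating contributions yields $f(\Delta,t)=2(1+t)^n-t^n+\binom{n+1}{2}t^{n-2}+2\binom{n}{2}t^{n-3}$. The only real subtlety, and hence the place that needs care, is the bookkeeping in the mixed count: verifying that a mixed edge lying in several triangles is counted once, that the two families of triangles do not overlap, and that no mixed face has dimension larger than $2$. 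Once the face list is pinned down, the rest is routine binomial arithmetic (e.g.\ $\binom{n+1}{2}=\binom{n}{2}+n$ reconciles the two forms of $f_1$).
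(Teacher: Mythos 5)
Your proof is correct and follows essentially the same route as the paper: classify faces by whether they lie in one of the two big simplices $\Delta_1,\Delta_2$ (your ``pure'' faces) or inside one of the $n(n-1)$ triangles (your ``mixed'' faces), count each type, and assemble the generating function. The paper's proof performs exactly this case analysis, merely without naming the dichotomy; your extra bookkeeping (explicitly deduplicating the mixed edges arising from different triangles and observing that the two triangle families are disjoint) just makes explicit checks that the paper leaves implicit.
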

\begin{proof}
The values of $f(\Delta)_{-1}$ and $f(\Delta)_0$ are immediate, because the vertex set of $\Delta$ consists of $2n$ elements. 
    
There are $\binom{n}{2}$ edges contained in the simplex $\Delta_1$ and $\binom{n}{2}$ contained in the simplex $\Delta_2$. Moreover, for every choice of $i \leq j$, there is an edge $\{ x_{1i},x_{2j}\}$, resulting in $\binom{n+1}{2}$ additional edges. Therefore $f(\Delta)_1 = 2 \binom{n}{2} + \binom{n+1}{2}$.

There are $\binom{n}{3}$ triangles contained in the simplex $\Delta_1$ and $\binom{n}{3}$ contained in the simplex $\Delta_2$. Moreover, for every $i < j$ there are two triangles  $\{ x_{1i},x_{2i},x_{2,j}\}$  and $\{ x_{1i},x_{1j},x_{2,j}\}$, resulting in $2\binom{n}{2}$ additional triangles. Therefore $f(\Delta)_2 = 2\binom{n}{3} + 2 \binom{n}{2}$.

Finally, for $k =3 \vvirg n-1$, the only faces of dimension $k$ are those contained in either $\Delta_1$ or $\Delta_2$. Therefore $f(\Delta)_k = 2 \binom{n}{k+1}$ for $k > 2$.

Using $\dim \Delta = n-1$, we conclude that 
\begin{align*}
f(\Delta,t)= &\mbox{ }\textstyle\sum_{k = 0}^{(n-1)+1} f_{k-1}(\Delta)_j  \cdot t^{(n-1)+1-k} \\
=&\mbox{ }t^n + (2n) t^{n-1} + \left( 2 \textstyle\binom{n}{2} + \textstyle\binom{n+1}{2} \right) t^{n-2} + 
\left(2\textstyle\binom{n}{3} + 2 \textstyle\binom{n}{2} \right) t^{n-3} + \textstyle \sum_{k=4}^n \left(2 \textstyle\binom{n}{k+1}\right) t^{n-k} \\
=& - t^n + 2\textstyle\binom{n+1}{2} t^{n-2} + 2 \textstyle\binom{n}{2} t^{n-3} + \textstyle 2\sum_0^{n} \binom{n}{k} t^k   \\
=&- t^n + 2\textstyle\binom{n+1}{2} t^{n-2} + 2 \textstyle\binom{n}{2} t^{n-3} + 2(t+1)^n.
\end{align*}
This yields the desired expression for $f(\Delta,t)$.
\end{proof}

\subsection{The third row of the Betti table of the initial ideal}
By upper semicontinuity, the Betti numbers of $\init(P_{2 \times n})$ bound from above the Betti numbers for $P_{2 \times n}$. Moreover, by \autoref{CMreg}, we know there are only three strands in the Betti table for $R/I_\Delta$, where $I_\Delta = \init(P_{2 \times n})$; hence, the same holds for $P_{2 \times n}$. In this section, we use \autoref{Hochster} and a combinatorial argument to compute the third row of the Betti table of $R/\init(P_{2 \times n})$. The results of \autoref{sec: spectral sequence} will guarantee that these Betti numbers are the same as the ones of $R/P_{2 \times n}$. 

\begin{thm}\label{lemma:thirdRowInitial}
The Betti numbers in the third row of the Betti table of $\init(P_{2 \times n})$ are:
\[
b_{k,k+3} = \sum_{w = 3}^{\lfloor\frac{k+3}{2}\rfloor} 2^{k+3-2w} \binom{n}{w} \binom{n-w}{k+3-2w} \binom{w-1}{2}.
\]
Here we adopt the convention that $\binom{a}{b} = 0$ if $a < b$ or $b < 0$.
\end{thm}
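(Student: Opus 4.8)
The plan is to apply Hochster's formula (\autoref{Hochster}) to the simplicial complex $\Delta$ associated to $\init(P_{2\times n})$, computing the contribution to the third row $b_{k,k+3}$ of $R/I_\Delta$ by summing, over all squarefree multidegrees $\sigma$ with $|\sigma| = k+3$, the dimension of $\tilde{\rmH}^{k+3-k-1}(\Delta|_\sigma,\bbK) = \tilde{\rmH}^{2}(\Delta|_\sigma,\bbK)$. So the first step is to understand, for a subset $\sigma$ of the $2n$ vertices, when the induced subcomplex $\Delta|_\sigma$ has nonvanishing second reduced cohomology, and to compute its dimension. Write $\sigma$ in terms of which columns $1,\ldots,n$ it touches: for each column $j$, $\sigma$ may contain neither, just $x_{1j}$, just $x_{2j}$, or both. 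Because the only faces of $\Delta$ of dimension $\ge 3$ live inside the two big simplices $\Delta_1,\Delta_2$, while the remaining higher faces are the triangles $\{x_{1i},x_{1j},x_{2j}\}$ and $\{x_{1i},x_{2i},x_{2j}\}$ for $i<j$, the induced subcomplex $\Delta|_\sigma$ is built from: the simplices on $\sigma\cap\text{(row 1)}$ and $\sigma\cap\text{(row 2)}$, glued along the vertices of doubled columns, plus some triangles. The key combinatorial claim to isolate is that $\tilde{\rmH}^2(\Delta|_\sigma)$ is nonzero precisely in a controlled configuration and then has a dimension counted by the binomial expression.

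The second step is to identify that configuration. I expect the mechanism producing $\mathrm{H}^2$ to be the following: a ``hollow tetrahedron'' (boundary of a $3$-simplex, i.e.\ four triangles but no solid $3$-cell) contributes $1$ to $\tilde{\rmH}^2$. Given the face structure, a hollow tetrahedron must use three vertices from one row and one from the other, say $x_{1a},x_{1b},x_{1c}$ with $a<b<c$ together with $x_{2c}$: the four triangles $\{x_{1a},x_{1b},x_{1c}\}$ (in $\Delta_1$), and $\{x_{1a},x_{1c},x_{2c}\}$, $\{x_{1b},x_{1c},x_{2c}\}$ of the type $\{x_{1i},x_{1j},x_{2j}\}$ — and one checks the fourth, $\{x_{1a},x_{1b},x_{2c}\}$, is \emph{not} a face, while the solid tetrahedron is not a face either, so this subcomplex on $4$ vertices is exactly $\partial\Delta^3$. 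Symmetrically one gets hollow tetrahedra from $\{x_{2a},x_{2b},x_{2c},x_{1a}\}$. After pinning down which $4$-subsets support such a sphere, I would argue (e.g.\ by a Mayer--Vietoris or nerve-type decomposition of $\Delta|_\sigma$, or by directly identifying $\Delta|_\sigma$ up to homotopy as a wedge of $2$-spheres) that $\dim\tilde{\rmH}^2(\Delta|_\sigma)$ equals the number of such hollow tetrahedra that are ``free'' in $\sigma$, namely: choose the triple of row-1 indices $a<b<c$ inside $\sigma$'s column set ($\binom{w-1}{2}$ ways after fixing that $c$ is the top of a $w$-subset, accounting for the factor $\binom{w-1}{2}$), require column $c$ to be doubled, require the remaining columns to be present and \emph{not} doubled (to avoid filling in the sphere or creating higher homology), giving the factors $2^{\,k+3-2w}$ and $\binom{n-w}{\,k+3-2w\,}$ and $\binom{n}{w}$; summing $w$ from $3$ to $\lfloor (k+3)/2\rfloor$ then yields exactly the stated formula. (The lower bound $w\ge 3$ is forced because we need three indices $a<b<c$; the upper bound $w\le\lfloor(k+3)/2\rfloor$ comes from $k+3 = |\sigma| \ge 2w -$ (undoubled columns) and the parity bookkeeping.)

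The main obstacle, and the step deserving the most care, is the cohomology computation $\dim\tilde{\rmH}^2(\Delta|_\sigma) = \#\{\text{free hollow tetrahedra in }\sigma\}$ — i.e.\ showing there is no \emph{other} source of $\mathrm{H}^2$ and no cancellation, and that distinct hollow tetrahedra give independent classes. I would handle this by a careful induction on the number of vertices of $\sigma$, splitting $\Delta|_\sigma$ along a well-chosen vertex (for instance the largest doubled column, or a vertex $x_{1n}$ if present) and running Mayer--Vietoris: the vertex star and its link both deformation retract onto simpler pieces, the link being (essentially) a smaller complex of the same shape together with some cone points, and one tracks that the connecting maps behave as expected so that the long exact sequence splits in degree $2$. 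An alternative I would try in parallel is a discrete Morse theory / shelling-type argument: exhibit an acyclic matching on $\Delta|_\sigma$ whose critical cells are exactly the $\varnothing$ (for $\tilde{\rmH}^{-1}$ bookkeeping) and one $2$-cell per free hollow tetrahedron, which would simultaneously give connectivity of $\Delta|_\sigma$ and the vanishing of $\tilde{\rmH}^i$ for $i\ne 2$. Once $\dim\tilde{\rmH}^2(\Delta|_\sigma)$ is in hand, the remaining step is pure enumeration: organize the sum over $\sigma$ by $w$ (the number of columns carrying at least one chosen vertex that participate ``fully'') and collect the binomial factors, which is routine.
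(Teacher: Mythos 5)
Your overall plan matches the paper: reduce via Hochster's formula to computing $\dim \widetilde{\rmH}^2(\Delta|_\sigma)$ for squarefree $\sigma$ with $|\sigma|=k+3$, then enumerate. But the central geometric claim about what produces $\rmH^2$ is wrong, and your own description already reveals the problem. You propose the generating $2$-sphere to be the subcomplex on $\{x_{1a},x_{1b},x_{1c},x_{2c}\}$ and then observe that the fourth triangle $\{x_{1a},x_{1b},x_{2c}\}$ is \emph{not} a face of $\Delta$. That means the subcomplex is only three of the four $2$-faces of a tetrahedron (all sharing the vertex $x_{1c}$), which is a cone with apex $x_{1c}$ over the boundary circle of $\{x_{1a},x_{1b},x_{2c}\}$; it is contractible, not $\partial\Delta^3$, and contributes nothing to $\widetilde{\rmH}^2$. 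The conclusion ``so this subcomplex is exactly $\partial\Delta^3$'' is a non sequitur: $\partial\Delta^3$ requires all four triangles.

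The actual source of $\rmH^2$, and what the paper's proof identifies, is different: each diagonal edge $\{x_{1i},x_{2j}\}$ ($i<j$) lies in exactly two triangles, which glue along it into a ``rectangle'' with top edge $\{x_{1i},x_{1j}\}\subset\Delta_1$ and bottom edge $\{x_{2i},x_{2j}\}\subset\Delta_2$; one needs a column to be \emph{doubled} (both $x_{1i},x_{2i}\in\sigma$) to form these. Three rectangles on doubled columns $i<j<\ell$ form a cylinder, and because $\Delta_1,\Delta_2$ are contractible the cylinder closes into a $2$-cycle. Moreover the class depends only on the outer pair $(i,\ell)$, giving $\dim\widetilde{\rmH}^2(\Delta|_\sigma)=\binom{w(\sigma)-1}{2}$ where $w(\sigma)$ is the number of doubled columns. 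Your enumeration is internally inconsistent with your geometric picture: you assert that only column $c$ need be doubled while the remaining columns should be undoubled, yet the factor $2^{k+3-2w}\binom{n}{w}\binom{n-w}{k+3-2w}$ you wish to recover counts $\sigma$ with exactly $w$ doubled columns. Replacing the ``hollow tetrahedron'' mechanism with the rectangle/cylinder mechanism, and taking $w$ to be the number of doubled columns in $\sigma$, fixes both issues and recovers the stated formula.
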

\begin{proof}
Let $\Delta$ be the Stanley-Reisner complex of $\init(P_{2 \times n})$ described in \autoref{CMreg}. By \autoref{Hochster} we have
\[
 b_{k, k+3} = \dim \Tor_{k} (R/ I_\Delta)_{k+3} = \sum _{\sigma : |\sigma| = {k+3}} \dim \Tor_{k} (R/ I_\Delta)_{\sigma} = \sum _{\sigma : |\sigma| = k+3} \dim \rmH^2( \Delta|_\sigma),
\]
where $\sigma$ ranges subsets of variables $\{ x_{11} \vvirg x_{2n}\}$ of cardinality $k + 3$ or equivalently over the squarefree multidegrees of total degree $k + 3$. Therefore, we need to compute the dimensions $\rmH^2( \Delta|_\sigma)$ for induced subcomplexes $\Delta|_\sigma$. 

As observed in \autoref{CMreg}, $\Delta$ is the union of the simplices $\Delta_{1}$, $\Delta_{2}$, and the triangles 
\[
\{x_{1i},x_{2i},x_{2j}\} \mbox{ and } \{x_{1i},x_{1j},x_{2j}\} \mbox{ with } i < j, 
\]
in \autoref{fig:triangles with diagonal edge}.
\begin{figure}[h]
\vskip -.1in
\includegraphics[width=6in]{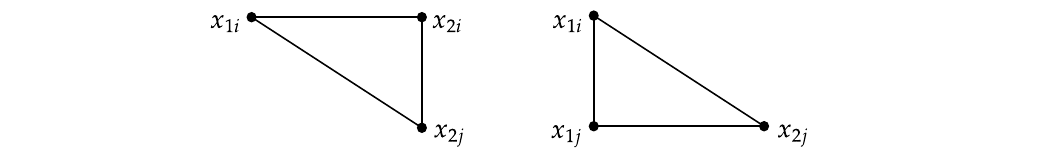} 
\vskip -.1in
\caption{A diagonal edge can only appear in two ways.}\label{fig:triangles with diagonal edge}
\end{figure}
Since $\Delta_1,\Delta_2$ are contractible, the cohomology is generated by the triangles in \autoref{fig:triangles with diagonal edge}. Every \emph{diagonal edge} $x_{1i},x_{2j}$ for $i < j$ appears in exactly two triangles, therefore if one of them contributes to a non-trivial cohomology cycle, then the other must appear as well. We deduce that the cohomology of $\rmH^2(\Delta)$ is generated by \emph{rectangles}, obtained by glueing the triangles of \autoref{fig:triangles with diagonal edge} along their diagonal edge, see \autoref{fig: rectangles}. 
\begin{figure}[h]
\vskip -.1in
\includegraphics[width=6in]{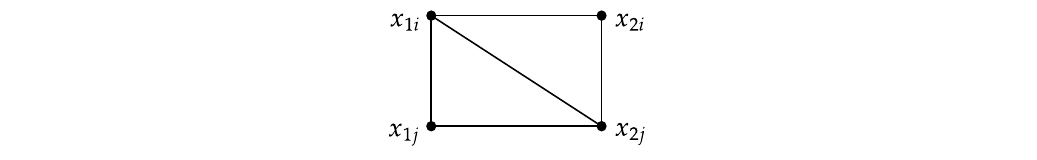} 
\vskip -.1in
\caption{A diagonal edge appears in at most two triangles}\label{fig: rectangles}
\end{figure}
Such rectangles are uniquely determined by their horizontal edges and therefore they are in bijection with pairs $(i,j)$ with $i < j$. In order to obtain a non-trivial closed cycle, one needs (at least) three rectangles, for instance, $(i,j),(j,\ell)$ and $(i,\ell)$ for $i < j < \ell$. These give rise to a \emph{cylinder}: pictorially, this is the union of the blue and green triangles in \autoref{fig:Delta3}, after appropriate relabeling; since the simplices $\Delta_1,\Delta_2$ are contracted, this cylinder is in fact a closed cycle, giving rise to a cohomology class. 

A cohomology class arising from rectangles $(i,j),(j,\ell),(i,\ell)$ for $i<j < \ell$ does not depend on $j$ and it coincides with the class obtained from $(i,j'),(j',\ell),(i,\ell)$ for $i < j'< \ell$. To see this, note that the rectangle with vertices $x_{1j'}, x_{1j}, x_{2j'},x_{2j}$ contracts in homology to a line which identifies edges $\overline{x_{1j'} x_{2j'}}$ and $\overline{x_{1j} x_{2j}}$. 

These classes generate $\rmH^2(\Delta)$. To see this, as noted above if a diagonal edge appears in a cohomology class, since the class is closed, the edge must appear in exactly 2 triangles, which form a rectangle. The left and right edges of such a rectangle are homologous to points since $\Delta$ contains the simplices $\Delta_1$ and $\Delta_2$, so any homology class is obtained by gluing a number of such objects along the horizontal edges. Hence generators of $\rmH^2(\Delta)$ are in bijection with pairs $(i,\ell)$ with $i < \ell-1$; write $[i,\ell]$ for the class corresponding to the pair $(i,\ell)$. 

Given a squarefree multidegree $\sigma$, identified with a subset of variables, the cohomology $\rmH^2(\Delta|_\sigma)$ is generated by (restrictions of) classes $[i,\ell]$ with the property that there exists $j$ with $i < j < \ell$ such that $\{x_{1i},x_{1j},x_{1\ell},x_{2i},x_{2j},x_{2\ell}\} \subseteq \sigma$. To see this, first notice that the $1$-simplices $\{ x_{1p},x_{2q} \}$ with $p < q$ do not affect $\rmH^2(\Delta |_\sigma)$ if $x_{2p},x_{1q} \notin \sigma$ since there are no $2$-simplices in $\Delta |_\sigma$ whose boundaries are those $1$-simplices. Let $\Delta'$ be the subcomplex of $\Delta |_{\sigma}$ obtained after deleting those $1$-simplices. In fact, $\Delta'$ is homotopically equivalent to $\Delta |_{\sigma'}$, where 
\[
\sigma' = \{ x_{1i},x_{2i} : i = 1 \vvirg n \text{ such that } x_{1i},x_{2i} \in \sigma  \}.
\]
Hence $\rmH^2(\Delta |_\sigma)$ only depends on the horizontal edges in $\Delta |_\sigma$.

In order to determine the number of such classes, define the \emph{weight} of a multidegree $\sigma$ to be 
\[
w(\sigma) = \#\{ i : \{ x_{1i} , x_{2i}\} \subseteq \sigma\}.
\]
Note that $\dim \rmH^2(\Delta|_\sigma)$ only depends on $w(\sigma)$, and we have $\dim \rmH^2(\Delta|_\sigma) = \binom{w(\sigma)-1}{2}$. By summing over all possible weights and all choices of $\sigma$ with a given weight, we obtain
\[
b_{k,k+3} = \sum_{\sigma: |\sigma| = k+3} \dim \rmH^2(\Delta|_{\sigma}) = \sum_{w = 0}^{\lfloor\frac{k+3}{2}\rfloor} \sum_{\sigma : \begin{smallmatrix}|\sigma|=k+3, \\ w(\sigma) = w\end{smallmatrix}} \binom{w-1}{2}.
\]
The number of multidegrees $\sigma$ with $|\sigma|=k+3, w(\sigma) = w$ is $2^{k+3-2w} \binom{n}{w} \binom{n-w}{k+3-2w} $. Indeed there are $\binom{n}{w}$ choices of the indices $i$ such that $\{ x_{1i},x_{2i}\} \subseteq \sigma$; then there are $\binom{n-w}{k+3-2w}$ choices for the indices $j$ such that exactly one among $x_{1j},x_{2j}$ belongs to $\sigma$; finally for each index $j$ there is a choice between whether $x_{1j}$ or $x_{2j}$ belongs to $\sigma$ and this accounts for the factors $2$.

Since the inner summation only depends on $w$, and it is $0$ for $w = 0,1,2$, we conclude
\[
b_{k,k+3} = \sum_{w = 3}^{\lfloor\frac{k+3}{2}\rfloor} 2^{k+3-2w} \binom{n}{w} \binom{n-w}{k+3-2w} \binom{w-1}{2}.
\]
\end{proof}

\subsection{From the third to the second row of the Betti table}

Computing the value of the Betti numbers in the second row of the Betti table of $R/P_{2 \times n}$ directly seems highly non-trivial. In this section, we use a Hilbert series argument, combined with the results of \cite{elsw} and \autoref{CMreg}, to obtain an expression for the \emph{differences of diagonal Betti numbers} 
\[
b_{k,k+2} - b_{k-1,k+2}, 
\]
see \autoref{hilbSeries}. The first result is that the generators of $P_{2 \times n}$ have no linear syzygies, \autoref{rmk: first row}. Then, in \autoref{sec: spectral sequence}, we will show that the third row of the Betti table of $R/\init(P_{2 \times n})$ computed in \autoref{lemma:thirdRowInitial} coincides with the third row of the Betti table of $R/P_{2 \times n}$. So \autoref{hilbSeries} allows us to compute the second row of the Betti table of $R/P_{2 \times n}$. 
\vskip .2in

\begin{lem}\label{rmk: first row}
The generators of $P_{2 \times n}$ have no linear syzygies. 
\end{lem}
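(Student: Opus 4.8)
The statement to prove is that $P_{2\times n}$ has no linear syzygies, equivalently that $b_{1,3}(R/P_{2\times n}) = 0$, i.e. $\Tor_1(R/P_{2\times n},\bbK)_3 = 0$. The generators of $P_{2\times n}$ are the $\binom n2$ permanents $g_{ij} = x_{1i}x_{2j} + x_{1j}x_{2i}$ for $i>j$, which all have degree $2$; a linear syzygy would be a relation $\sum_{i>j} \ell_{ij}\, g_{ij} = 0$ with the $\ell_{ij}$ linear forms, not all zero. The plan is to compare with the determinantal case and then rule out the linear syzygies directly, or invoke the computation of \cite{elsw} as the excerpt suggests.

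\textbf{Approach via the initial ideal.} The cleanest route is semicontinuity together with Hochster's formula. Since $\init(P_{2\times n})$ is the squarefree monomial ideal $I_\Delta$ of \autoref{CMreg}, upper semicontinuity of Betti numbers gives $b_{1,3}(P_{2\times n}) \le b_{1,3}(\init(P_{2\times n}))$, so it suffices to show $\init(P_{2\times n})$ has no linear syzygies. By \autoref{Hochster}, $\Tor_1(R/I_\Delta,\bbK)_\sigma \cong \tilde\rmH^{|\sigma|-2}(\Delta|_\sigma)$, and for $|\sigma| = 3$ this is $\tilde\rmH^1(\Delta|_\sigma)$, the first reduced cohomology of the induced subcomplex on three vertices. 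A linear syzygy of $I_\Delta$ would require three vertices whose induced subcomplex is a $3$-cycle (a hollow triangle), i.e. three edges of $\Delta$ among those three vertices but no filling $2$-face. So the task reduces to checking: whenever three of the variables $x_{ab}$ pairwise span edges of $\Delta$, they also span a triangle of $\Delta$. Using the explicit description of the edges of $\Delta$ (the two simplices $\Delta_1,\Delta_2$ contribute all edges within each row; the mixed edges are $\{x_{1i},x_{2j}\}$ with $i\le j$) and the triangles (all triples within $\Delta_1$ or $\Delta_2$, plus $\{x_{1i},x_{2i},x_{2j}\}$ and $\{x_{1i},x_{1j},x_{2j}\}$ for $i<j$), one does a short case analysis on how a three-element vertex set distributes between the two rows: three in one row is a face of $\Delta_k$; two in the top row and one in the bottom, say $\{x_{1i},x_{1j},x_{2k}\}$, is an edge-triple only if all three pairs are edges, which forces $k\le i$ and $k\le j$, and combined with $i<j$ forces $k=i$ or requires checking that $\{x_{1i},x_{1j},x_{2i}\}$ with $i<j$ is indeed a listed triangle — it is; the symmetric case is analogous. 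Hence every such triple bounds a $2$-face, so $\tilde\rmH^1(\Delta|_\sigma) = 0$ for all $|\sigma|=3$, giving $b_{1,3}(\init(P_{2\times n})) = 0$ and therefore $b_{1,3}(P_{2\times n}) = 0$.

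\textbf{Alternative direct argument.} Alternatively, one can argue directly with the permanents: suppose $\sum_{i>j}\ell_{ij} g_{ij} = 0$. Work in the bidegree decomposition induced by counting the number of first-row versus second-row variables; each $g_{ij}$ is bihomogeneous of bidegree $(1,1)$, so each $\ell_{ij}$ must be of bidegree $(0,1)$ or $(1,0)$ separately, and one analyzes the $(1,2)$- and $(2,1)$-graded pieces of the relation. Each such piece lands in a small space of monomials, and expanding $\sum \ell_{ij} g_{ij}$ in the monomial basis yields a linear system on the coefficients of the $\ell_{ij}$ whose only solution is zero; this is a finite, if slightly tedious, linear-algebra check, and it is in essence the content of the relevant computation in \cite{elsw}.

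\textbf{Main obstacle.} The only real work is the bookkeeping in either route — the case analysis on three-vertex induced subcomplexes in the first approach, or the monomial-coefficient linear system in the second — and making sure the boundary cases $k=i$, $k=j$, $i=j\pm1$, etc., are handled so that no hollow triangle slips through. There is no conceptual difficulty; the statement is ultimately a low-degree combinatorial verification, and I expect to present it via the $\init$-ideal argument above since it is the shortest and meshes with the Stanley–Reisner machinery already set up.
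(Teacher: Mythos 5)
Your primary approach is broken for two distinct reasons, and the error is not merely the bookkeeping you flagged as the ``only real work.''

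First, there is an indexing error: for an ideal generated in degree $2$, ``no linear syzygies'' is the statement $b_{2,3}(R/P_{2\times n}) = 0$, i.e.\ $\Tor_2(R/P_{2\times n},\bbK)_3 = 0$, not $b_{1,3}$. The quantity $b_{1,3}(R/I) = \dim \Tor_1(R/I,\bbK)_3$ counts degree-$3$ minimal \emph{generators} of $I$ (which, for $P_{2\times n}$, is trivially zero). Via Hochster's formula in the paper's statement, $\Tor_1(R/I_\Delta,\bbK)_\sigma \simeq \tilde\rmH^{|\sigma|-2}(\Delta|_\sigma)$, so for $|\sigma|=3$ the hollow-triangle condition detects cubic minimal generators, not syzygies. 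What you actually need is $\Tor_2(R/I_\Delta,\bbK)_\sigma \simeq \tilde\rmH^{|\sigma|-3}(\Delta|_\sigma) = \tilde\rmH^0(\Delta|_\sigma)$, which measures disconnectedness of $\Delta|_\sigma$.

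Second, and more fatally, even with the correct index the semicontinuity route cannot possibly close. The initial ideal \emph{does} have linear syzygies: for instance $x_{1k}\cdot (x_{1i}x_{2j}) - x_{1i}\cdot(x_{1k}x_{2j}) = 0$ whenever $i,k > j$, so $b_{2,3}(R/\init(P_{2\times n})) > 0$, and semicontinuity gives $b_{2,3}(R/P_{2\times n}) \le b_{2,3}(R/\init(P_{2\times n}))$, which is vacuous. Concretely, $\sigma = \{x_{1i},x_{1j},x_{2k}\}$ with $k < i < j$ gives a disconnected $\Delta|_\sigma$. And your claimed hollow-triangle verification is also false as stated: you inverted the inequality. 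With your own (correct) description ``the mixed edges are $\{x_{1i},x_{2j}\}$ with $i\le j$,'' the set $\{x_{1i},x_{1j},x_{2k}\}$ has all three edges when $i \le k$ and $j \le k$, not $k \le i,j$; taking $i<j<k$ one gets a hollow triangle — these are exactly the cubic generators $x_{1i}x_{1j}x_{2k}$ of $\init(P_{2\times n})$ recorded after \autoref{thm: gb permanents}, so they cannot be absent.

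The paper's proof avoids all of this by not comparing Betti numbers at all: it counts dimensions. Since $P_{2\times n}$ is generated in degree $2$ and the multiplication map $R_1 \otimes (P_{2\times n})_2 \to (P_{2\times n})_3$ is surjective, the space of linear syzygies has dimension $2n\cdot\HF_{P_{2\times n}}(2) - \HF_{P_{2\times n}}(3)$. The passage to $\init(P_{2\times n})$ is then used only through the fact that Hilbert functions agree, and $\HF_{\init(P_{2\times n})}(3)$ is counted directly from the monomial generators. This works precisely because the extra cubic generators and the extra linear syzygies of $\init(P_{2\times n})$ cancel in the Euler-characteristic/Hilbert-function computation, which is invisible if you try to compare the two rows of the Betti table separately. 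Your sketched alternative (bidegree bookkeeping on a putative relation $\sum \ell_{ij} g_{ij} = 0$) is a viable direct route but is not carried out, and it is not the same as the paper's argument.
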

\begin{proof}
Since $P_{2 \times n}$ is generated in degree $2$, the number of its linear syzygies is given by the difference $2n \cdot \mathrm{HF}_{P_{2 \times n}}(2) - \mathrm{HF}_{P_{2 \times n}}(3)$, where $\mathrm{HF}_{P_{2 \times n}}$ denotes the Hilbert function of the ideal. We show that this difference is $0$, or equivalently $\mathrm{HF}_{P_{2 \times n}}(3) = 2n \cdot \mathrm{HF}_{P_{2 \times n}}(2)$. To see this, note that $P_{2 \times n}$ and $\init(P_{2 \times n})$ have the same Hilbert function, therefore $\mathrm{HF}_{P_{2 \times n}}(3)$ can be computed directly using the presentation of $\init(P_{2 \times n})$ obtained from \autoref{thm: gb permanents}. The degree $3$ monomials of $\init(P_{2 \times n})$ are as follows:
\begin{itemize}
    \item for $i < j < k$, we have $x_{1i} x_{1j} x_{2k}$ and $x_{1i} x_{2j} x_{2k}$ directly from the cubic generators; these are $2 \binom{n}{3}$ monomials;
    \item for $i > j > k$, we have $x_{1i} x_{1j} x_{2k}$ and $x_{2i} x_{1j} x_{2k}$, generated by $x_{1j} x_{2k}$, and $x_{1i} x_{2j} x_{2k}$ and $x_{1i} x_{2j} x_{1k}$, generated by $x_{1i}x_{2j}$; these are $4 \binom{n}{3}$ monomials;
    \item for $i > j$, we have $x_{1i} x_{1j} x_{2j}$ and $x_{1i}x_{2i}x_{2j}$, generated by $x_{1i}x_{2j}$; these are $2 \binom{n}{2}$ monomials. 
\end{itemize}
We obtain
\[
\mathrm{HF}_{P_{2 \times n}}(3) = \textstyle \mathrm{HF}_{\init(P_{2 \times n})}(3) = 2 \binom{n}{3} + 4 \binom{n}{3} + 2 \binom{n}{2} = 2n \binom{n}{2} = 2n \cdot \mathrm{HF}_{P_{2 \times n}}(2),
\]
as desired.
\end{proof}

\autoref{rmk: first row} and \autoref{CMreg} guarantee that the Betti numbers of $P_{2 \times n}$ satisfy $b_{i,j}=0$ for all $i,j >1$, except when $j=i+2,i+3$. The following fact will allow us to determine the Hilbert series of $R/P_{2 \times n}$ using the $f$-vector computed in \autoref{lemma:f vector Delta}. In \autoref{hilbSeries}, we will use this result to compute the differences $b_{i,i+2} - b_{i-1,i+2}$. Let $\mathrm{HS}_M(t) = \sum (\dim M_k)t^k$ denote the Hilbert series of a graded module $M$ with homogeneous components $M_k$.

\begin{lem}[{\cite[Corollary 1.15]{MS}}]\label{lem:HF from f vector}
   Let $\Delta$ be a simplicial complex of dimension $N$ with $f$-vector $[f_{-1} \vvirg f_N]$. The Hilbert series of $R/ I_\Delta$ is given by 
   \[
   \mathrm{HS}_{R/I_{\Delta}} ( t) = t^{N+1}\frac{f(\Delta, \frac{1 - t}{t})}{(1-t)^{N+1}}.
   \]
\end{lem}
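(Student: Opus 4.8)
The plan is to write down an explicit monomial $\bbK$-basis of $R/I_\Delta$ and to read off its Hilbert series by organizing the basis according to the supports of monomials.

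First I would recall, from \autoref{SRing}, that a monomial $m = \prod_i x_i^{a_i}$ lies in $I_\Delta$ if and only if its support $\supp(m) = \{\, i : a_i > 0 \,\}$ is not a face of $\Delta$: indeed $m \in I_\Delta$ exactly when $\supp(m)$ contains a minimal non-face, which is equivalent to $\supp(m) \notin \Delta$. Consequently the images of those monomials whose support is a face of $\Delta$ form a $\bbK$-basis of $R/I_\Delta$. I would then partition this basis by support. For a fixed face $\tau \in \Delta$, a monomial with support exactly $\tau$ has the form $\prod_{v \in \tau} x_v^{a_v}$ with $a_v \geq 1$ for all $v \in \tau$, and there are exactly $\binom{d-1}{|\tau|-1}$ such monomials of total degree $d$; equivalently, the generating function of the monomials supported exactly on $\tau$, graded by degree, is $\bigl( \sum_{a \geq 1} t^a \bigr)^{|\tau|} = \bigl( \tfrac{t}{1-t} \bigr)^{|\tau|}$.

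Summing over all faces and collecting them by dimension then yields the identity of rational functions
\[
\HS_{R/I_\Delta}(t) = \sum_{\tau \in \Delta} \Bigl( \frac{t}{1-t} \Bigr)^{|\tau|} = \sum_{i=-1}^{N} f_i \Bigl( \frac{t}{1-t} \Bigr)^{i+1},
\]
where the term $i = -1$ corresponds to the empty face, with $f_{-1} = 1$. To conclude, I would match this with the asserted closed form: substituting $s = \tfrac{1-t}{t}$ into $f(\Delta, s) = \sum_{k=0}^{N+1} f_{k-1} s^{N+1-k}$ and multiplying by $t^{N+1}/(1-t)^{N+1}$, the $k$-th summand becomes $f_{k-1} \cdot \tfrac{t^{N+1}(1-t)^{N+1-k}}{t^{N+1-k}(1-t)^{N+1}} = f_{k-1}\bigl( \tfrac{t}{1-t} \bigr)^{k}$, and reindexing $i = k-1$ reproduces the sum above verbatim. (The factor $t^{N+1}$ clears the apparent poles at $t=0$ coming from the substitution, so the right-hand side is a genuine rational function.)

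There is no genuine obstacle here: the argument is a bookkeeping computation with formal power series, and it is presumably why the statement is simply cited. The only points requiring attention are the correct treatment of the empty face — which accounts for the constant term $1$ of the Hilbert series, i.e. $f_{-1} = 1$ — and the normalization conventions of the generating function $f(\Delta, t)$, both of which must line up with the indexing fixed just before \autoref{lemma:f vector Delta}.
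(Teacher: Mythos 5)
Your proof is correct and is the standard argument for this classical fact, which the paper simply cites to \cite[Corollary 1.15]{MS} without reproducing a proof. Your derivation — characterizing the monomial basis of $R/I_\Delta$ by the condition that the support be a face, partitioning by support, summing the geometric series $\bigl(\tfrac{t}{1-t}\bigr)^{|\tau|}$ over faces, and matching against the paper's normalization $f(\Delta, t) = \sum_{k=0}^{N+1} f_{k-1} t^{N+1-k}$ by the substitution $s = \tfrac{1-t}{t}$ — is exactly the coarse-graded shadow of the $\mathbb{Z}^\nu$-graded computation carried out in the cited reference, and the bookkeeping checks out.
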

The vector of the coefficients of $t^{N+1}f(\Delta, \frac{1 - t}{t})$ is called the $h$-vector of the simplicial complex. We refer to \cite{Z} for the combinatorial relations between the $f$-vector and the $h$-vector and the resulting properties of the simplicial complex.

\begin{cor}\label{hilbSeries}
The Betti numbers of $P_{2 \times n}$ satisfy 
\[
b_{k,k+2} - b_{k-1,k+2} = 2 \binom{n}{k+2} - \binom{2n}{k+2} + \binom{n+1}{2} \binom{2n-2}{k} - 2 \binom{n}{2}\binom{2n-3}{k-1} 
\]
with the convention that $\binom{a}{b} = 0 $ if $b > a$ or if $b < 0$.
\end{cor}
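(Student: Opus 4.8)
The plan is to extract the identity for $b_{k,k+2}-b_{k-1,k+2}$ by comparing two expressions for the Hilbert series of $R/P_{2\times n}$: one coming from the $f$-vector of $\Delta$ via \autoref{lemma:f vector Delta} and \autoref{lem:HF from f vector}, and one coming from the (now largely determined) shape of the Betti table. Recall that $R/P_{2\times n}$ and $R/\init(P_{2\times n})=R/I_\Delta$ have the same Hilbert series, and that by \autoref{rmk: first row}, \autoref{CMreg}, and \autoref{lemma:thirdRowInitial} the only possibly nonzero Betti numbers of $R/P_{2\times n}$ are $b_{0,0}=1$, $b_{1,2}=\binom{n}{2}$, and $b_{k,k+2},b_{k,k+3}$ for $k\geq 2$ (together with $b_{1,2}$ in row two and no entries in row one beyond that). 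The additivity of the Hilbert series along the minimal free resolution gives
\[
\mathrm{HS}_{R/P_{2\times n}}(t)\,(1-t)^{2n} \;=\; \sum_{k,j}(-1)^k b_{k,j}\, t^{j}
\;=\; 1 - \binom{n}{2}t^2 + \sum_{k\geq 2}(-1)^k\bigl(b_{k,k+2}t^{k+2}+b_{k,k+3}t^{k+3}\bigr),
\]
the numerator of the Hilbert series in lowest terms. I would expand both sides and read off the coefficient of $t^{k+2}$.

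The first concrete step is to compute the left-hand side from \autoref{lem:HF from f vector}. With $\dim\Delta=n-1$, that lemma gives $\mathrm{HS}_{R/I_\Delta}(t)=t^n f(\Delta,\tfrac{1-t}{t})/(1-t)^n$, and substituting the closed form $f(\Delta,s)=2(s+1)^n-s^n+\binom{n+1}{2}s^{n-2}+2\binom{n}{2}s^{n-3}$ from \autoref{lemma:f vector Delta} with $s=\tfrac{1-t}{t}$ yields, after clearing denominators,
\[
t^n f(\Delta,\tfrac{1-t}{t}) \;=\; 2 - (1-t)^n + \binom{n+1}{2}(1-t)^{n-2}t^2 + 2\binom{n}{2}(1-t)^{n-3}t^3 .
\]
Hence $\mathrm{HS}_{R/P_{2\times n}}(t)(1-t)^{2n}$, which is the numerator above, equals $(1-t)^{n}\bigl[2-(1-t)^n+\binom{n+1}{2}(1-t)^{n-2}t^2+2\binom{n}{2}(1-t)^{n-3}t^3\bigr]$. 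Extracting the coefficient of $t^{m}$ from each of the four terms is a routine binomial expansion: the coefficient of $t^{m}$ in $2(1-t)^n-(1-t)^{2n}$ is $2(-1)^m\binom{n}{m}-(-1)^m\binom{2n}{m}$, the coefficient of $t^m$ in $\binom{n+1}{2}(1-t)^{2n-2}t^2$ is $\binom{n+1}{2}(-1)^{m}\binom{2n-2}{m-2}$, and the coefficient of $t^m$ in $2\binom{n}{2}(1-t)^{2n-3}t^3$ is $2\binom{n}{2}(-1)^{m-1}\binom{2n-3}{m-3}$; combine and set $m=k+2$.

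Comparing the coefficient of $t^{k+2}$ on the two sides, and noting that the $b_{k,k+3}$-terms only contribute to coefficients of $t^{k+3}$ (so do not interfere with the coefficient of $t^{k+2}$ once one is careful about the index shift, or more precisely they contribute to the $t^{k'+3}=t^{(k-1)+2+2}$ slot for the previous $k$), one gets a relation of the form $(-1)^{k+2}b_{k,k+2} + (-1)^{k+1}b_{k-1,k+2} = (\text{coefficient extracted above})$, whose sign simplifies to $b_{k,k+2}-b_{k-1,k+2}$ equal to the asserted alternating-binomial expression. The main thing to get right — and the only place where care is genuinely needed — is the bookkeeping of which Betti numbers land in which degree: rows two and three of the Betti table both contribute to the numerator, and the coefficient of $t^{k+2}$ picks up $b_{k,k+2}$ from row two and $b_{k-1,k+2}$ from row three (the entry $b_{k-1,(k-1)+3}$), so that the two genuinely unknown quantities at that degree are precisely $b_{k,k+2}$ and $b_{k-1,k+2}$. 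Everything else is the elementary binomial algebra sketched above, after which a final sign check (using $(-1)^{k+2}=(-1)^k$) and the stated convention $\binom{a}{b}=0$ for $b<0$ or $b>a$ deliver the formula.
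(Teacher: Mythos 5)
Your proposal is correct and follows essentially the same route as the paper's proof: both write the numerator $\mathrm{HS}_{R/P_{2\times n}}(t)(1-t)^{2n}$ as the alternating sum $\sum_{k,j}(-1)^kb_{k,j}t^j$, use \autoref{rmk: first row} and \autoref{CMreg} to restrict which Betti numbers appear, substitute the $f$-vector polynomial from \autoref{lemma:f vector Delta} into \autoref{lem:HF from f vector} to get the closed form $(1-t)^n\bigl[2-(1-t)^n+\binom{n+1}{2}t^2(1-t)^{n-2}+2\binom{n}{2}t^3(1-t)^{n-3}\bigr]$, and then read off the coefficient of $t^{k+2}$, which combines $b_{k,k+2}$ and $b_{k-1,k+2}$ exactly as you describe. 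The binomial expansions, the sign bookkeeping, and the final identification all match the paper's computation.
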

\begin{proof}
The rings $R/ P_{2 \times n}$ and $R/ \init(P_{2 \times n})$ have the same Hilbert series. 
The exactness of the free resolution guarantees $\HS_{R/P_{2 \times n}} (t)= \sum_{k\geq 0} (-1)^{k} \HS_{F_k}(t)$, where $F_k$ is the $k$-th free module in the resolution of $R/P_{2 \times n}$. We have $F_0 = R$, and $F_1 = R(-2)^{b_{1,2}}$, with $b_{1,2} = \binom{n}{2}$. By \autoref{CMreg} and \autoref{rmk: first row}, there are only two nonzero Betti numbers $b_{k,k+2},b_{k,k+3}$ for every $k \geq 2$. 

\noindent Therefore $F_k = R( -(k+2))^{b_{k,k+2}} \oplus R( -(k+3))^{b_{k,k+3}}$; in particular 
\[\HS_{F_k} (t) = \frac{1}{(1-t)^{2n}} \cdot (b_{k,k+2} t^{k+2} + b_{k,k+3} t^{k+3}).\] Hence the Hilbert series $\HS_{R/P_{2 \times n}} (t)$ may be written as 
\[
\begin{array}{ccc}
\sum_{k\geq 0} (-1)^{k} \HS_{F_k}(t) &= & \frac{1}{(1-t)^{2n}} - \frac{b_{1,2} t^2}{(1-t)^{2n}} + \sum_{k\geq 2} \frac{(-1)^{k}}{(1-t)^{2n}}  \cdot (b_{k,k+2} t^{k+2} + b_{k,k+3} t^{k+3})  \\ 
&=& \frac{1}{(1-t)^{2n}} - \frac{b_{1,2} t^2}{(1-t)^{2n}} + \sum_{k\geq 2} \frac{(-1)^{k}}{(1-t)^{2n}}  \cdot (b_{k,k+2} - b_{k-1,k+2})t^{k+2}.
\end{array}
\]
We apply \autoref{lemma:f vector Delta} to obtain another expression for $\HS_{R/P_{2 \times n}}$ and compare the two expressions to write a formula for the differences of the Betti numbers. We have
\begin{align*}
\HS_{R/P_{2 \times n}} (t)=\mbox{ } & t^n \frac{f(\Delta, \frac{1-t}{t})}{(1-t)^n}  \\
= \mbox{ } & \frac{t^n}{(1-t)^n} \Bigl[2 \left({\textstyle \frac{1-t}{t}} - 1\right)^n - ({\textstyle \frac{1-t}{t}})^n + \textstyle\binom{n+1}{2} ({\textstyle \frac{1-t}{t}})^{n-2} + 2 \textstyle\binom{n}{2} ({\textstyle \frac{1-t}{t}})^{n-3}\Bigr]   \\
 = \mbox{ } &\frac{1}{(1-t)^n} \Bigl[ 2  - (1-t)^n + \textstyle \binom{n+1}{2} t^2 (1-t)^{n-2} + 2 \textstyle \binom{n}{2} t^3 (1-t)^{n-3}\Bigr].
\end{align*}
The numerator of the expression above, multiplied by $(1-t)^n$, gives the generating function of the difference of the Betti numbers. More precisely, $(-1)^{k}(b_{k,k+2} - b_{k-1,k+2})$ is the coefficient of $t^{k+2}$ in the following expression:
\begin{align*}
   &(1-t)^n  \Bigl[ 2  - (1-t)^n + \textstyle \binom{n+1}{2} t^2 (1-t)^{n-2} + 2 \textstyle \binom{n}{2} t^3 (1-t)^{n-3}\Bigr]  \\
   &= \sum_{k = 0}^n (-1)^k 2 \textbinom{n}{k} t^k - \sum_{k=0}^{2n} (-1)^k \textbinom{2n}{k} t^k + \sum_{k=0}^{2n-2} (-1)^k \textbinom{n+1}{2}\textbinom{2n-2}{k} t^{k+2}\!+\!
    \sum_{k=0}^{2n-3} (-1)^k 2\textbinom{n}{2}\textbinom{2n-3}{k} t^{k+3}.
\end{align*}
Shift the summation indices appropriately to obtain the desired formula.
\end{proof}

\section{Bernstein-Gelfand-Gelfand and $\Tor_i(R/P_{2 \times n},\k)_{i+3}$}\label{sec: spectral sequence}

In order to complete the proof of \autoref{main}, we will show that the third row of the Betti table of $R/P_{2 \times n}$ coincides with the third row of the Betti table of $R/\init(P_{2\times n})$. This is the content of \autoref{thm: spectral sequence count}, which is the main result of this section. 

We obtain this result via the Bernstein-Gelfand-Gelfand (BGG) correspondence, which allows us to associate two complexes to the modules $R/P_{2 \times n}$ and $R/\init(P_{2 \times n})$. We then build a double complex; the result will be obtained by studying the resulting spectral sequence.

\subsection{The BGG correspondence.} \label{subsec:BGG}
Let $V = R_1$ be the linear span of the variables of the polynomial ring $R$, so that $R = \Sym V$ coincides with the symmetric algebra of $V$. Let $W = V^*$ be the dual space of $V$ and let $E = \Lambda W$ be the exterior algebra of $W$. We write $e_{11} \vvirg e_{2n} \in W$ for the basis dual to the basis $x_{11} \vvirg x_{2n}$ of $V$. For every finitely generated graded $R$-module $N = \bigoplus_{t \in \mathbb{Z}} N_t$, let $N^{\vee}$ denote its \emph{graded dual}, defined by
\[
\ N^{\vee} = \bigoplus_{t \in \bbZ} \Hom_{\mathbb{K}} (N_t, \mathbb{K}) = \bigoplus_{t \in \bbZ} N_t^{\vee}.
\]
The action of $R$ is naturally defined by $(s \smcdot \phi)(m) = \phi(s \smcdot m)$ for every $s \in R$, $\phi \in N^{\vee}$ and $m \in N$ homogeneous elements. 

The {\em Bernstein-Gelfand-Gelfand} correspondence associates to every finitely generated, graded $R$-module $N$ a complex of free $E$-modules $\tilde{\bf R}(N)$. The homological properties of the complex $\tilde{\bf R}(N)$ are strongly related to the ones of the module $N$. We refer to \cite[Section 7E]{Eisenbud:SyzygyBook} for a discussion on this subject. The complex of $E$-modules is defined as follows:
\begin{align*}
\tilde{\bR}(N) \colon \quad \ldots \to E \otimes N_k^{\vee} &\to E \otimes N_{k-1}^{\vee} \to \ldots \\
e \otimes \phi &\mapsto \sum_{i,j} e \cdot e_{i,j} \otimes x_{i,j} \cdot \phi.
\end{align*}
We use the convention that both $N^\vee$ and $E$ are graded positively, with $\deg(e) = s$ is $ e\in \Lambda^s W$ and $\deg(\phi) = k$ if $\phi \in N_k^\vee$. This is different from \cite[\S7B]{Eisenbud:SyzygyBook}, but this difference does not affect any of the results. We record here \cite[Proposition 7.21]{Eisenbud:SyzygyBook}, which relates the graded Betti numbers of $N$ to the homology of $\tilde{\bfR}(N)$.
\begin{prop}[{\cite[Proposition 7.21]{Eisenbud:SyzygyBook}}]\label{prop: BGG correspondence}
Let $N$ be a graded $R$-module, and let $\tilde{\bfR}(N)$  be the associated BGG complex of $E$-modules. Then, for every $k$ and $\ell$,
\[
\rmH_\ell(\tilde{\bR}(N))_{k+\ell} \simeq \Tor_k(N,\bbK)_{k+\ell}^{\vee}.
\]
\end{prop}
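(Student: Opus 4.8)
The plan is to prove the isomorphism by identifying $\tilde{\bR}(N)$, one internal degree at a time, with the $\bbK$-linear dual of the Koszul complex that computes $\Tor_\bullet^R(N,\bbK)$; since over a field homology commutes with dualization, the homology groups of $\tilde{\bR}(N)$ can then be read off directly. (This is exactly \cite[Proposition 7.21]{Eisenbud:SyzygyBook}, up to the grading convention noted in the text, so one could also simply cite it.)

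First I would recall the Koszul resolution $\kappa_\bullet \colon \cdots \to R\otimes_\bbK \Lambda^2 V \to R\otimes_\bbK V \to R \to \bbK \to 0$ of $\bbK$ over $R = \Sym V$, with differential $r \otimes \omega \mapsto \sum_i x_i r \otimes \iota_{e_i}\omega$, where $x_{11} \vvirg x_{2n}$ are the variables, $e_{11} \vvirg e_{2n} \in W$ the dual basis, and $\iota_{e_i}$ the contraction. Tensoring over $R$ with $N$ yields the complex $N \otimes_\bbK \Lambda^\bullet V$ whose $p$-th homology is $\Tor^R_p(N,\bbK)$; restricting to internal degree $d$ gives a finite complex of finite-dimensional $\bbK$-vector spaces with $p$-th term $N_{d-p}\otimes_\bbK \Lambda^p V$, whose position-$p$ homology is $\Tor^R_p(N,\bbK)_d$.

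Next I would dualize this finite complex termwise. Via the perfect pairings $N_{d-p}\times N_{d-p}^\vee \to \bbK$ and $\Lambda^p V \times \Lambda^p W \to \bbK$ induced by $V \times W \to \bbK$, one gets a natural identification $(N_{d-p}\otimes_\bbK \Lambda^p V)^\vee \cong N_{d-p}^\vee \otimes_\bbK \Lambda^p W$, which is precisely the internal-degree-$d$ part of the term $E\otimes N_{d-p}^\vee$ of $\tilde{\bR}(N)$. The key computation — and the step I expect to require the most care — is checking that the transpose of the Koszul differential is the BGG differential. This uses two adjunctions: multiplication by $x_{ij}$ on $N^\vee$ is the transpose of multiplication by $x_{ij}$ on $N$ (by the definition $(x_{ij} \smcdot \phi)(m) = \phi(x_{ij}\smcdot m)$), and exterior multiplication by $e_{ij}$ on $\Lambda W$ is the transpose of contraction by $\iota_{e_{ij}}$ on $\Lambda V$. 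Combining these shows that the dual of the Koszul differential in internal degree $d$ is, up to a sign depending only on exterior degree (hence irrelevant for homology), the map $e\otimes\phi \mapsto \sum_{i,j} e\cdot e_{ij} \otimes x_{ij} \smcdot\phi$, i.e.\ the differential of $\tilde{\bR}(N)$.

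Finally, since $\bbK$ is a field, the homology in position $\ell$ of the dual complex is the dual of the homology in position $d-\ell$ of the original complex; applying this in internal degree $d = k+\ell$ gives $\rmH_\ell(\tilde{\bR}(N))_{k+\ell} \cong \Tor^R_{(k+\ell)-\ell}(N,\bbK)_{k+\ell}^\vee = \Tor_k(N,\bbK)_{k+\ell}^\vee$, as claimed. The only genuine obstacles are bookkeeping ones: matching the paper's positive grading on $N^\vee$ and $E$ with the convention in \cite{Eisenbud:SyzygyBook}, keeping the homological index aligned with the term $E\otimes N_\ell^\vee$, and pinning down the signs in the differential identification; everything else is formal.
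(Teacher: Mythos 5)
The paper offers no proof of this proposition: it is quoted directly as \cite[Proposition 7.21]{Eisenbud:SyzygyBook} and used as a black box. Your argument is a correct reconstruction of the standard proof of that cited result. You identify each internal-degree-$d$ strand of $\tilde{\bR}(N)$ with the $\bbK$-linear dual of the corresponding strand of the Koszul complex $N\otimes_\bbK \Lambda^\bullet V$, check that the transpose of the Koszul differential (multiplication by $x_{ij}$ on $N$, contraction by $e_{ij}$ on $\Lambda V$) becomes the BGG differential (multiplication by $x_{ij}$ on $N^\vee$, exterior multiplication by $e_{ij}$ on $\Lambda W$) up to a degree-dependent sign that can be absorbed by a change of sign on the terms, and then use that dualization over a field commutes with homology. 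The index bookkeeping is right: the homological-position-$\ell$ term of $\tilde{\bR}(N)$ in internal degree $d=k+\ell$ is $\Lambda^{k}W\otimes N_\ell^\vee$, dual to the $p=k$ term $N_\ell\otimes\Lambda^{k}V$ of the degree-$d$ Koszul strand, yielding $\rmH_\ell(\tilde{\bR}(N))_{k+\ell}\cong\Tor_k(N,\bbK)_{k+\ell}^\vee$.
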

Following \autoref{prop: BGG correspondence}, our goal is to show that, for every $s$,
\[
\rmH_3(\tilde{\bR}(R/P_{2 \times n}))_{s+3} = \rmH_3(\tilde{\bR}(R/\init(P_{2 \times n}))_{s+3}.
\]
This is the statement that the third row of the Betti table of $R/P_{2 \times n}$ coincides with the one of $R/\init(P_{2 \times n})$, which was computed in \autoref{lemma:thirdRowInitial}.

We point out that to prove this result it suffices to consider the BGG complexes of $R/P_{2 \times n}$ and $R/\init(P_{2 \times n})$ in homological degrees $0 \vvirg 5$. These are the ones that control the third homology group. In the next section, we will construct a double complex which relates $\tilde{\bR}(R / \init(P_{2 \times n}))$ and $\tilde{\bR}(R / P_{2 \times n})$. By analyzing the resulting spectral sequence, we will obtain the desired result.

\subsection{A double complex associated to the BGG complexes}\label{subsec: deformation}

Since $(P_{2 \times n})_k$ and $(\init(P_{2 \times n}))_k$ have the same dimension for every $k$, we may define an isomorphism between the spaces $(R/P_{2 \times n})_k^*$ and $(R/\init(P_{2 \times n}))_k^*$, and identify them with a vector space $V_k$. In this way the complexes $\tilde{\bR}(R / P_{2 \times n})$ and $\tilde{\bR}(R / \init(P_{2 \times n}))$ have the \emph{same} free modules $V_k \otimes E$ and they are characterized by their differentials. 

Since we are only interested in the third homology group of the BGG complexes, we restrict our analysis to the subcomplexes of $\tilde{\bR}(R / P_{2 \times n})$ and $\tilde{\bR}(R / \init(P_{2 \times n}))$ in homological degree $0 \vvirg 5$. 

Moreover, we consider the canonical identification between the spaces $(R/P_{2 \times n})_k^*$ and the annihilators $(P_{2 \times n})_k^\perp \subseteq R_k^*$, and similarly for $\init(P_{2 \times n})$. We use variables $y_{ij}$ dual to $x_{ij} \in R$ so that $R_k^*$ is the space of homogeneous polynomials of degree $k$ in the $y_{ij}$'s. In this way, the differentials of the BGG complex coincide with the restriction of the (rescaled) Koszul differential on the homogeneous components of $E \otimes R^\vee$. Formally, $y_{ij} = e_{ij} \in W = R_1^*$, but we prefer to use a different notation to avoid confusion between the product structure of $R^\vee$ and the product structure of the exterior algebra $E$. 

For $k = 0 \vvirg 5$, let $T_k$ be the linear span of the monomials in the variables $y_{ij}$ in $(P_{2 \times n})^\perp_k$; in particular $T_k \subseteq (\init(P_{2 \times n}))_k^\perp$. The spaces $T_k$ can be computed combinatorially. For instance $T_2$ is spanned by monomials which do not occur in the support of any $2 \times 2$ subpermanent: these have the form $y_{1i}y_{2i}$, $y_{1i}y_{1j}$ and $y_{2i}y_{2j}$.

For $k = 0,1$, $(P_{2 \times n})_k = (\init(P_{2 \times n}))_k = 0$, hence the annihilator coincides with the entire dual space $R_k^*$. For $k=2\vvirg 5$, we fix bases of $(P_{2 \times n})^\perp_k$ and a corresponding basis of $(\init(P_{2 \times n}))^\perp$ given by the initial elements of the basis $(P_{2 \times n})^\perp_k$. This is done as follows.
\begin{itemize}
    \item Consider the basis of $(P_{2 \times n})_2$ consisting of the monomials of $T_2$ and the following elements, for $i<j$: 
    \[
     y_{1i}y_{2j} - y_{2i}y_{1j}.
    \]
    The corresponding basis of $\init(P_{2 \times n})_2$ consists of the monomials of $T_2$ and the initial monomials $y_{1i}y_{2j}$ of the elements above. 
    \item Consider the basis of $(P_{2 \times n})_3$ consisting of the monomials of $T_3$ and the following elements, for $i<j$:
\[
 \begin{array}{ccc}
   y_{1i}( y_{1i}y_{2j} - y_{2i}y_{1j} ), & & y_{2i}(y_{1i}y_{2j} - y_{2i}y_{1j}),\\
   y_{1j}(y_{1i}y_{2j} - y_{2i}y_{1j}), & &y_{2j}(y_{1i}y_{2j} - y_{2i}y_{1j}).
 \end{array}
\]
The corresponding basis of $\init(P_{2 \times n})_3$ consists of the monomials of $T_3$ and the initial monomials of the elements above, which are always the left-most terms. 

\item Consider the basis of $(P_{2 \times n})_4$ consisting of the monomials of $T_4$ and the following elements, for $i<j$:
\[
\begin{array}{ccc}
y_{1i}^2(y_{1i}y_{2j} - y_{2i}y_{1j}), & & y_{2i}^2(y_{1i}y_{2j} -  y_{2i}y_{1j}),\\ 
y_{1i}y_{1j}( y_{1i}y_{2j} -  y_{2i}y_{1j} ), & & 
   y_{2i}y_{2j}( y_{1i}y_{2j} -  y_{2i}y_{1j} ),  \\ 
y_{1j}^2(y_{1i}y_{2j}-y_{2i}y_{1j}), & & y_{2j}^2(y_{1i}y_{2j}-y_{2i}y_{1j}), \\
\multicolumn{3}{c}{y_{1i}y_{2i}( y_{1i}y_{2j} -  y_{2i}y_{1j} ),} \\
\multicolumn{3}{c}{y_{1j}y_{2j}( y_{1i}y_{2j} -  y_{2i}y_{1j} ),} \\ 
\multicolumn{3}{c}{y_{1i}^2 y_{2j}^2 -  y_{1j}y_{2i}(y_{1i}y_{2j} - y_{2i}y_{1j}).}
\end{array}
\]
The corresponding basis of $\init(P_{2 \times n})_4$ consists of the monomials of $T_4$ and the initial monomials of the elements above, which are always the left-most terms. 

\item  Consider the basis of $(P_{2 \times n})_5$ consisting of the monomials of $T_5$ and the following elements, for $i<j$:
\[
\begin{array}{ccc}
y_{1i}^3(y_{1i}y_{2j} -  y_{2i}y_{1j}), & & y_{2i}^3(y_{1i}y_{2j} -  y_{2i}y_{1j}),\\ 
y_{1i}^2y_{1j}( y_{1i}y_{2j} -  y_{2i}y_{1j} ), & & y_{2i}^2y_{2j}( y_{1i}y_{2j} -  y_{2i}y_{1j} ), \\
y_{1i}y_{1j}^2( y_{1i}y_{2j} -  y_{2i}y_{1j} ), & & y_{2i}y_{2j}^2( y_{1i}y_{2j} -  y_{2i}y_{1j} ), \\
y_{1j}^3(y_{1i}y_{2j} -  y_{2i}y_{1j}), & & y_{2j}^3(y_{1i}y_{2j} -  y_{2i}y_{1j}), \\
 y_{1i}(y_{1i}^2 y_{2j}^2 -  y_{1j}y_{2i}(y_{1i}y_{2j} - y_{2i}y_{1j})), & & y_{2i}(y_{1i}^2 y_{2j}^2 -  y_{1j}y_{2i}(y_{1i}y_{2j} - y_{2i}y_{1j})),\\ 
 y_{1j}(y_{1i}^2 y_{2j}^2 -  y_{1j}y_{2i}(y_{1i}y_{2j} - y_{2i}y_{1j})), & & y_{2j}(y_{1i}^2 y_{2j}^2 -  y_{1j}y_{2i}(y_{1i}y_{2j} - y_{2i}y_{1j})), \\
 \multicolumn{3}{c}{y_{1i}^2y_{2i}( y_{1i}y_{2j} -  y_{2i}y_{1j} ),} \\ 
\multicolumn{3}{c}{y_{1j}^2y_{2j}( y_{1i}y_{2j} -  y_{2i}y_{1j} ),} \\ 
\multicolumn{3}{c}{y_{1i}y_{2i}^2( y_{1i}y_{2j} -  y_{2i}y_{1j} ),} \\
\multicolumn{3}{c}{y_{1j}y_{2j}^2( y_{1i}y_{2j} -  y_{2i}y_{1j} ).} \\
\end{array}
\]
the corresponding basis of $\init(P_{2 \times n})_5$ consists of the monomials of $T_5$ and the initial monomials of the elements above, which are always the left-most terms.
\end{itemize}
Let $V_0 \vvirg V_5$ be vector spaces with $\dim V_k = \dim (P_{2 \times n})^\perp_k$. Let $\iota_k : (P_{2 \times n})^\perp_k \to V_k$ and $\iota^{\init}_k : \init(P_{2 \times n})^\perp_k \to V_k$ be isomorphisms, mapping the basis elements described above, and their initial monomials, to the same basis elements of $V_k$. 

With these definitions, the subcomplexes of the BGG complexes of $R/P_{2 \times n}$ and $R/\init(P_{2 \times n})$ obtained restricting to homological degrees $0 \vvirg 5$ can be regarded as complexes 
\[
\cdots \to E \otimes V_k \to E \otimes V_{k-1} \to \cdots 
\]
with differentials $\delta^{\full}_k = \iota_{k-1} \circ \delta_k \circ \iota^{-1}_k$ and $\delta^{\init}_k = \iota^{\init}_{k-1} \circ \delta_k \circ {\iota^{\init}_k}^{-1}$, respectively. Here $\delta_k$ is the (dual) Koszul differential.

We have the following fundamental result:
\begin{lem}\label{lemma: commuting diagram}
 For $k = 0 \vvirg 4$, we have
 \[
 \delta^{\init}_k \circ \delta^{\full}_{k+1} = - \delta^{\full}_k \circ \delta^{\init}_{k+1} 
 \]
\end{lem}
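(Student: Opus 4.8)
The plan is to reduce the identity to a statement about the single (dual) Koszul differential $\bar\delta$ on $E\otimes R^{\vee}$ and then to verify it by a finite case check against the explicit bases fixed in \autoref{subsec: deformation}. Both $\delta^{\full}_{\bullet}$ and $\delta^{\init}_{\bullet}$ are obtained from $\bar\delta$: from its restriction to the subcomplex $E\otimes(P_{2\times n})^{\perp}_{\bullet}$, resp.\ $E\otimes\init(P_{2\times n})^{\perp}_{\bullet}$, transported to $E\otimes V_{\bullet}$ by $\iota_{\bullet}$, resp.\ $\iota^{\init}_{\bullet}$. The two transporting isomorphisms agree on the sub-basis of monomials of $T_{k}$; moreover, if a monomial $m$ occurs in no degree-$k$ element of $P_{2\times n}$ then each $m/x_{a}$ occurs in no degree-$(k-1)$ element (a collision-free argument: a monomial appearing in $f\in(P_{2\times n})_{k-1}$ produces $x_am$ in $x_af\in(P_{2\times n})_k$). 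Hence $\delta^{\full}_{k}$ and $\delta^{\init}_{k}$ coincide on the span of $T_{k}$ and both carry it into the span of $T_{k-1}$, so $\delta^{\full}_{k}-\delta^{\init}_{k}$ is supported on the ``quadric'' basis vectors; since all maps in sight are $E$-linear, it suffices to evaluate the identity on each basis vector of $V_{k+1}$, for $k\le 4$.

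On a basis vector corresponding to $b=\init(b)+(\text{tail})\in(P_{2\times n})^{\perp}_{k}$ one has $\delta^{\full}_{k}(b)=\iota_{k-1}\bar\delta_{k}(b)$ and $\delta^{\init}_{k}(b)=\iota^{\init}_{k-1}\bar\delta_{k}(\init(b))$, so $(\delta^{\full}_{k}-\delta^{\init}_{k})(b)$ is obtained by applying $\bar\delta_{k}$ to the tail of $b$ and re-expanding in the chosen basis. Every quadric basis vector listed in \autoref{subsec: deformation} is attached to a single index pair $(i,j)$ with $i<j$, its tail being a monomial multiple (in the variables $y_{1i},y_{2i},y_{1j},y_{2j}$) of the tail $-y_{2i}y_{1j}$ of $q_{ij}=y_{1i}y_{2j}-y_{2i}y_{1j}$, possibly with one further ``second-layer'' term coming from the special vectors $y_{1i}^{2}y_{2j}^{2}-y_{1j}y_{2i}q_{ij}$. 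One computes that $(\delta^{\full}_{k}-\delta^{\init}_{k})(b)$ is again a sum of terms $e_{2i}\otimes(\cdots)$ and $e_{1j}\otimes(\cdots)$ whose companion vectors are quadric basis vectors or $T$-monomials still attached to $(i,j)$; applying $\delta^{\full}_{k-1}-\delta^{\init}_{k-1}$ once more, every term of $(\delta^{\full}-\delta^{\init})^{2}$ carries two exterior generators from $\{e_{2i},e_{1j}\}$ for one $(i,j)$, so it either vanishes (repeated generator) or cancels against a partner $e_{1j}\wedge e_{2i}=-e_{2i}\wedge e_{1j}$. Hence $(\delta^{\full}-\delta^{\init})^{2}=0$; since $(\delta^{\full})^{2}=0=(\delta^{\init})^{2}$ (both are truncations of honest BGG complexes, so their differentials square to zero in the range $k\le 4$), this is exactly the assertion $\delta^{\init}_{k}\circ\delta^{\full}_{k+1}=-\delta^{\full}_{k}\circ\delta^{\init}_{k+1}$.

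In carrying this out I would organise the basis vectors as in \autoref{subsec: deformation}: the vectors $\mu\cdot q_{ij}$ for a monomial $\mu$, which reduce to the degree-two case $q_{ij}$ via the explicit form of $\bar\delta$, and the special vectors $y_{1i}^{2}y_{2j}^{2}-y_{1j}y_{2i}q_{ij}$ together with their multiples in degrees four and five. This last class is the main obstacle: because they are not of the form (monomial)$\cdot q_{ij}$, applying $\bar\delta$ to their tails produces terms spanning two layers of the antidiagonal filtration, and these must be re-decomposed carefully against the $(P_{2\times n})^{\perp}$-basis on one side and the $\init(P_{2\times n})^{\perp}$-basis on the other before the cancellation becomes visible. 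Equivalently, one may package the statement as the assertion that the Gr\"obner degeneration $R/P_{2\times n}\rightsquigarrow R/\init(P_{2\times n})$ lifts to an $\mathbb A^{1}$-family of complexes of free $E\otimes\bbK[t]$-modules (the graded duals stay $\bbK[t]$-free because the Hilbert function is constant) with $\tilde\delta_{t}^{2}=0$ identically, specialising to $\delta^{\full}$ at $t=1$ and to $\delta^{\init}$ at $t=0$; but since the degree-four syzygy forces this family to be quadratic, not linear, in $t$, extracting the anticommutation from $\tilde\delta_{t}^{2}=0$ still comes down to the same case analysis, now organised index-pair by index-pair.
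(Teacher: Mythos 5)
Your reformulation is genuinely different from the paper's and is, in my view, the more illuminating one.  The paper simply verifies the anticommutation $\delta^{\init}_k \circ \delta^{\full}_{k+1} = -\delta^{\full}_k \circ \delta^{\init}_{k+1}$ directly on basis elements, sketching one computation by hand and observing that, since every $E$-basis element involves at most five column indices, the general case reduces to $n=5$ and a finite computer check.  You instead observe that, because $(\delta^{\full})^2 = 0 = (\delta^{\init})^2$ (both being transports of BGG differentials of honest complexes), the claim is equivalent to $(\delta^{\full}-\delta^{\init})^2 = 0$, and you then analyze the ``perturbation'' $\delta^{\full}-\delta^{\init}$ structurally: your observation that it vanishes on $T$-monomials (via the correct collision-free argument), and that on a quadric basis vector it equals $\iota^{\init}_{k-1}\circ\pi\circ\bar\delta_k$ applied to the tail of the corresponding element of $(P_{2\times n})^{\perp}_k$ (where $\pi$ is the projection killing monomials in $\init(P_{2\times n})$ --- this is what ``re-expanding in the chosen basis'' must mean, and you should make $\pi$ explicit), are both correct and buy a real simplification, since $T$-monomials are immediately discharged.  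What this reformulation buys is a conceptually cleaner statement and a reduction of the case work to quadric basis vectors only.

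The gap is that your claim that $(\delta^{\full}-\delta^{\init})^2 = 0$ is never actually established.  You assert that $(\delta^{\full}_k-\delta^{\init}_k)(b)$ carries only $e_{2i}$ and $e_{1j}$ against companions still attached to $(i,j)$, and that the resulting $e_{1j}e_{2i}$ cross-terms in $(\delta^{\full}-\delta^{\init})^2$ cancel or give repeated generators --- but these are exactly the claims that need a case-by-case check, and you yourself flag the degree-four and degree-five vectors built from $y_{1i}^2y_{2j}^2 - y_{1j}y_{2i}q_{ij}$ as ``the main obstacle'' without resolving it.  (For the record, checking by hand, the vanishing on those vectors happens because the relevant $(\delta^{\full}-\delta^{\init})$ outputs each carry only a single exterior generator, so $e_{1j}^2$ or $e_{2i}^2$ kills them; the $e_{1j}e_{2i}$ cancellation never seems to be invoked, but this still has to be verified across the whole basis.)  You also do not make the reduction to bounded $n$ explicit, so the asserted ``finite case check'' is only implicit in the observation that each quadric vector is attached to a single pair $(i,j)$; the paper's $n=5$ remark does this cleanly.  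So your proposal is a valid alternative strategy that is roughly at the same level of completeness as the paper's printed argument --- both hinge on a finite verification left to the reader or to software --- but as written it still contains unproven assertions that would need to be discharged before it counts as a proof.
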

\begin{proof}
 This can be checked directly on the basis elements. The result is trivial for $k = 0 ,1$ because in those cases $\delta^{\full}_k = \delta^{\init}_{k}$ hence both compositions are identically $0$.
 
In the case $k=2$, we sketch the proof for the element $v \in V_3$ defined by
\begin{align*}
 \iota_k (y_{1i}^2y_{2j} - y_{1i}y_{2i}y_{1j}) = \iota^{\init}_k(y_{1i}^2y_{2j}).
\end{align*}
We need to show 
 \[
 \delta^{\init}_k \circ \delta^{\full}_{k+1} (v) =  - \delta^{\full}_k \circ \delta^{\init}_{k+1}(v)
 \]
 or equivalently
 \[
 \delta_2 \circ {\iota^{\init}_2}^{-1} \circ \iota _2 \circ \delta_{3} (y_{1i}^2y_{2j} - y_{1i}y_{2i}y_{1j}) = -
\delta_2 \circ {\iota_2}^{-1} \circ \iota^{\init}_2 \circ \delta_3 ( y_{1i}^2y_{2j} )
 \]
We have 
 \begin{align*}
 \delta_{3} (y_{1i}^2y_{2j} - y_{1i}y_{2i}y_{1j}) =  &e_{1i} y_{1i}y_{2j} + e_{2j}y_{1i}^2 - e_{1i} y_{2i}y_{1j} - e_{2i}y_{1i}y_{1j} - e_{1j}y_{1i}y_{2i} = \\ 
 & e_{1i} (y_{1i}y_{2j} - y_{2i}y_{1j}) - e_{1j}y_{1i}y_{2i} - e_{2i}y_{1i}y_{1j} + e_{2j}y_{1i}^2 .
 \end{align*}
The isomorphism ${\iota_2^{\init}}^{-1} \circ \iota_2$ maps the first term to its leading term $ e_{1i} y_{1i}y_{2j}$, and the others to themselves because they are generated by elements of $T_2$. We conclude
\begin{align*}
\delta_2 \circ {\iota^{\init}_2}^{-1} \circ &\iota _2 \circ \delta_{3} (y_{1i}^2y_{2j} - y_{1i}y_{2i}y_{1j}) = \\ &\delta_2 ( e_{1i} y_{1i}y_{2j} - e_{1j}y_{1i}y_{2i} - e_{2i}y_{1i}y_{1j} + e_{2j}y_{1i}^2 ) =\\
& e_{1i} e_{2j} y_{1i} - e_{1j} e_{1i} y_{2i} - e_{1j} e_{2i} y_{1i} - e_{2i}e_{1i}y_{1j} - e_{2i}e_{1j}y_{1i} + e_{2j}e_{1i}y_{1i} = \\
&- e_{1j} e_{1i} y_{2i} - e_{2i}e_{1i}y_{1j} .
\end{align*}
Similarly, we have 
\begin{align*}
\delta_2 \circ {\iota_2}^{-1} &\circ \iota^{\init}_2 \circ \delta_3 ( y_{1i}^2y_{2j} ) = \delta_2 \circ {\iota_2}^{-1} \circ \iota^{\init}_2 ( e_{1i} y_{1i}y_{2j} + e_{2j} y_{1i}^2 ) = \\
&\delta_2 (  e_{1i} (y_{1i}y_{2j} - y_{1j}y_{2i}) + e_{2j} y_{1i}^2 ) = \\
&-  e_{1i} e_{1j} y_{2i} - e_{1i}e_{2i}y_{1j}.
\end{align*}
The proof in general can be performed easily using computer algebra software. In fact, we point out that since all basis elements are supported on (at most) five column indices of the $2 \times n$ matrix, it is enough to prove the statement for $n =5$.
\end{proof}

\autoref{lemma: commuting diagram} allows us to define a double complex $\bbF_{i,j} = E \otimes V_{i+j}$, with horizontal differentials given by $\delta^{\full}$ and vertical differentials given by $(-1)^j \delta^{\init}$:
{\tiny
\[
\xymatrix{
\ar[d]^{\delta^{\init}} & \ar[d]^{-\delta^{\init}} & \ar[d]^{\delta^{\init}} & \ar[d]^{-\delta^{\init}} & \ar[d]^{\delta^{\init}} & \ar[d]^{-\delta^{\init}} &   \\
E \otimes V_0 \ar[d]^{\delta^{\init}} & E \otimes V_1 \ar[d]^{-\delta^{\init}} \ar[l]_{\delta^{\full}}& E \otimes V_2 \ar[d]^{\delta^{\init}} \ar[l]_{\delta^{\full}}& E \otimes V_3 \ar[d]^{-\delta^{\init}} \ar[l]_{\delta^{\full}} & E \otimes V_4 \ar[d]^{\delta^{\init}} \ar[l]_{\delta^{\full}} & E \otimes V_5 \ar[d]^{-\delta^{\init}} \ar[l]_{\delta^{\full}} & 0 \ar[l]_{\delta^{\full}}  \ar[d]^{\delta^{\init}} \\
0 & E \otimes V_0 \ar[l]_{\delta^{\full}} \ar[d]^{-\delta^{\init}}& E \otimes V_1 \ar[l]_{\delta^{\full}} \ar[d]^{\delta^{\init}}& E \otimes V_2 \ar[l]_{\delta^{\full}} \ar[d]^{-\delta^{\init}} & E \otimes V_3 \ar[l]_{\delta^{\full}} \ar[d]^{\delta^{\init}} & E \otimes V_4 \ar[l]_{\delta^{\full}} \ar[d]^{-\delta^{\init}}& E \otimes V_5 \ar[l]_{\delta^{\full}} \ar[d]^{\delta^{\init}} & 0 \ar[d]^{-\delta^{\init}} \ar[l]_{\delta^{\full}} \\
& 0 & E \otimes V_0 \ar[l]_{\delta^{\full}} \ar[d]^{\delta^{\init}}& E \otimes V_1\ar[d]^{-\delta^{\init}} \ar[l]_{\delta^{\full}} & E \otimes V_2 \ar[l]_{\delta^{\full}} \ar[d]^{\delta^{\init}}& E \otimes V_3 \ar[l]_{\delta^{\full}} \ar[d]^{-\delta^{\init}} & E \otimes V_4 \ar[l]_{\delta^{\full}} \ar[d]^{\delta^{\init}}& E \otimes V_5 \ar[l]_{\delta^{\full}} \ar[d]^{-\delta^{\init}} \\
& & & & & & & }
\]
}
Every row and every column defines a complex. Moreover, \autoref{lemma: commuting diagram} guarantees that the diagram commutes. 

\subsection{The spectral sequence of the double complex}
In this section, we use a spectral sequence argument to complete the proof. A complete exposition of spectral sequences is beyond the scope of this paper; for detailed expositions see Appendix A.3 of \cite{Ecommalg} or \S 9.3 of \cite{SSS}. We briefly describe the construction.

Fix an integer $N$ and truncate the double complex $\bbF$ to a double complex $\bbD$ with $\bbD_{i,j} = 0$ if $i < -N$ or $j < -N$ and $\bbD_{i,j} = \bbF_{i,j}$ otherwise. One associate to $\bbD$ two collections of sequences of complexes, usually called the spectral sequences of $\bbD$ according to the vertical or the horizontal filtration.

Since every row and every column of $\bbD$ is a complex, one can consider the homology resulting from the vertical differentials $\delta^{\init}$ and obtain a collection of complexes whose terms are homology modules $E^1_{(i,j)}$ arising from the vertical differentials and whose maps are induced by the horizontal differentials $\delta^{\full}$. This is the \emph{first page} of the spectral sequence of $\bbD$ according to the vertical filtration. Taking the homology with respect to the horizontal differentials $\delta^{\full}$ gives a collection of modules $E^2_{(i,j)}$ and the theory guarantees that these modules are connected by certain maps: for every $(i,j)$, there is a map $E^2_{(i,j)} \to E^2_{(i+1,j-2)}$ each sequence of such maps forms a complex. This is the second page of the spectral sequence. This process continues: for every $p$, the $p$-th page of the spectral sequence according to the vertical filtration is a collection of modules and maps $E^p_{(i,j)} \to E^p_{(i+p-1, j-p)}$. One can see that for every $d$, there exists $p$ such that all maps in and out of $E^p_{(i,j)}$ for $i+j = d$ are $0$; in this case one says that the $(i,j)$-th term of the spectral sequence degenerates and writes $E^{\infty}_{(i,j)}$ for the corresponding module. We observe that for every $p$ and every $k$, one has $\dim (E^p_{(i,j)})_k \leq \dim (E^{p-1}_{(i,j)})_k$ because $(E^p_{(i,j)})_k$ is a quotient of a subspace of $(E^{p-1}_{(i,j)})_k$. 

The same construction can be done by considering homology according to the horizontal differentials to obtain the first page and analogous maps for the higher pages. This is the spectral sequence of $\bbD$ with respect to the horizontal filtration, and we denote the corresponding modules by $\bar{E}^{p}_{(i,j)}$.

We record the following fundamental result, which is a consequence of \cite[Thm. 9.3.5, Thm. 9.3.6]{SSS}.
\begin{thm}\label{thm: 936 SSS}
For every $k$ and every $d$
\begin{equation}\label{eqn: vertical equals horizontal}
\sum_{i+j = d} \dim (E^{\infty}_{(i,j)})_k = \sum_{i+j = d} \dim (\bar{E}^{\infty}_{(i,j)})_k .
\end{equation}
\end{thm}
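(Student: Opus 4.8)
The plan is to recognize \eqref{eqn: vertical equals horizontal} as the familiar statement that both spectral sequences of a suitably bounded double complex converge to the homology of its total complex, and to extract the equality of dimensions from the existence of a finite filtration on that homology. First I would assemble the total complex $\Tot(\bbD)$, with $\Tot(\bbD)_d = \bigoplus_{i+j = d}\bbD_{i,j}$ and total differential $D$ obtained by combining the horizontal maps $\delta^{\full}$ with the sign-twisted vertical maps $\pm\delta^{\init}$; the anticommutation relation $\delta^{\init}\circ\delta^{\full} = -\delta^{\full}\circ\delta^{\init}$ of \autoref{lemma: commuting diagram}, together with the fact that every row and every column of $\bbD$ is itself a complex, yields $D^2 = 0$. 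I would then record that $D$, $\delta^{\full}$ and $\delta^{\init}$ are all homogeneous of degree $0$ for the internal $\bbZ$-grading indexed by $k$ in \eqref{eqn: vertical equals horizontal}: the (dual) Koszul differential raises exterior degree by one while lowering polynomial degree by one, so it preserves the total internal degree on $E\otimes V_\bullet$. Consequently $\Tot(\bbD)$ and both spectral sequences split as direct sums over $k$ of complexes, respectively spectral sequences, of finite-dimensional $\bbK$-vector spaces, so every dimension in \eqref{eqn: vertical equals horizontal} is finite and the identities may be checked degreewise in $k$.

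Next I would invoke convergence. Passing from $\bbF$ to the truncated complex $\bbD$ makes the double complex bounded below in both indices, and for such a double complex \cite[Thm. 9.3.5, Thm. 9.3.6]{SSS} guarantee that the spectral sequence of the vertical filtration converges: for each $(i,j)$ the terms $E^p_{(i,j)}$ stabilize to $E^{\infty}_{(i,j)}$, and $\rmH_d(\Tot(\bbD))$ carries a finite filtration whose associated graded module is $\bigoplus_{i+j=d} E^{\infty}_{(i,j)}$. Applying the same theorems to the horizontal filtration produces a (possibly different) finite filtration of the \emph{same} homology $\rmH_d(\Tot(\bbD))$ whose associated graded module is $\bigoplus_{i+j=d}\bar E^{\infty}_{(i,j)}$. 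Both statements are compatible with the internal grading, so I would read them in each degree $k$.

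Finally, since a finite filtration of a graded vector space has the same dimension in each degree as its associated graded module, I would conclude
\[
\sum_{i+j=d}\dim\bigl(E^{\infty}_{(i,j)}\bigr)_k = \dim\rmH_d(\Tot(\bbD))_k = \sum_{i+j=d}\dim\bigl(\bar E^{\infty}_{(i,j)}\bigr)_k,
\]
which is exactly \eqref{eqn: vertical equals horizontal}; no further computation with the modules $V_k$ is required at this stage. The only point needing genuine care — and the one I expect to be the main obstacle — is checking the hypotheses of the convergence theorems: that the truncation really does make $\bbD$ bounded enough for both spectral sequences to converge, and that every differential appearing on every page of both spectral sequences is homogeneous for the internal $k$-grading, so that the degreewise dimension count is legitimate. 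Once this bookkeeping is settled, the theorem is immediate.
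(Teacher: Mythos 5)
Your argument is correct, and in fact it supplies the proof that the paper omits: the paper simply records the statement as ``a consequence of [SSS, Thm.~9.3.5, Thm.~9.3.6]'' without spelling out why, and what you have written is exactly the standard unpacking of that citation. You correctly observe that the truncated complex $\bbD$ is bounded (indeed, after truncation it is supported on a finite region, since $V_{i+j}=0$ for $i+j\notin\{0,\ldots,5\}$), so both filtration spectral sequences converge to $\rmH_\bullet(\Tot(\bbD))$; you correctly note that all three differentials preserve the internal $k$-grading so the dimension count may be read off degreewise; and you correctly conclude by comparing the two finite filtrations of $\rmH_d(\Tot(\bbD))_k$. One small point of bookkeeping worth flagging: the paper's double complex is built with vertical maps $(-1)^j\delta^{\init}$ precisely so that the squares \emph{commute}, whereas you invoke the raw anticommutation $\delta^{\init}\delta^{\full}=-\delta^{\full}\delta^{\init}$ of \autoref{lemma: commuting diagram} and build the total differential directly from that; both sign conventions yield $D^2=0$ and the same spectral sequences, so this is a cosmetic difference rather than a gap.
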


The following result shows that the spectral sequence defined by the vertical filtration, in homological degree $3$, degenerates almost entirely in the first page.

\begin{lem}\label{lem: first page vertical}
For every $(i,j)$ with $i+j = 3$, $i,j \geq -N$, except $(i,j) = (-N,N+3)$, we have $E^\infty_{(i,j)} = E^1_{(i,j)} = \rmH_3(\tilde{\bR}(R / \init(P_{2 \times n})))$.
\end{lem}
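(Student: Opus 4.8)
The plan is to run the spectral sequence of the vertical filtration explicitly and read off what it does in total degree $3$.

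\textit{Step 1: identify the columns and compute $E^1$.} By the construction in \autoref{subsec: deformation}, the column of $\bbD$ passing through the spot holding $E\otimes V_{i+j}$ is (a truncation to homological degrees $0\vvirg 5$ of) the BGG complex $\tilde{\bR}(R/\init(P_{2 \times n}))$, whose differential is $\delta^{\init}$. Hence $E^1_{(i,j)}$ is the homology of this complex at the $E\otimes V_{i+j}$-term. When $i+j=3$ and $(i,j)\neq(-N,N+3)$ both neighbouring terms $E\otimes V_4$ and $E\otimes V_2$ survive the truncation $i,j\geq -N$, so $E^1_{(i,j)}=\rmH_3(\tilde{\bR}(R/\init(P_{2 \times n})))$; at the corner $(-N,N+3)$ the term $E\otimes V_2$ is cut off, so $E^1$ there is only a cokernel of $\delta^{\init}$, which is exactly why that point must be excluded.

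\textit{Step 2: kill the incoming differentials.} By \autoref{CMreg} we have $\reg(R/\init(P_{2 \times n}))\leq 3$, so $b_{k,k+\ell}(R/\init(P_{2 \times n}))=0$ for $\ell\geq 4$, and \autoref{prop: BGG correspondence} then gives $\rmH_\ell(\tilde{\bR}(R/\init(P_{2 \times n})))=0$ for $\ell\geq 4$. Consequently every $E^1$-term lying on an anti-diagonal $i+j\geq 4$ vanishes, away from the truncation boundary; a short check of which corner terms on $i+j=4$ are nonzero shows that the only differential they could feed is the one landing in $(-N,N+3)$, which is excluded. Therefore all differentials on every page that land in the relevant part of the anti-diagonal $i+j=3$ are zero.

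\textit{Step 3 (the crux): kill the outgoing differentials.} After Step 2 the only differentials that could disturb an $E^1_{(i,j)}$ with $i+j=3$ are those mapping into the anti-diagonal $i+j=2$, the first being $d_1$, which is the map $\rmH_3(\tilde{\bR}(R/\init(P_{2 \times n})))\to\rmH_2(\tilde{\bR}(R/\init(P_{2 \times n})))$ induced by the horizontal differential $\delta^{\full}$. The plan is to show this map is $0$, i.e.\ that $\delta^{\full}$ sends every $\delta^{\init}$-cycle in $E\otimes V_3$ to a $\delta^{\init}$-boundary in $E\otimes V_2$. Using the explicit bases of $V_2$ and $V_3$ from \autoref{subsec: deformation} together with the anticommutation relation of \autoref{lemma: commuting diagram}, this becomes a finite computation; exactly as in the proof of \autoref{lemma: commuting diagram}, all basis elements are supported on at most five column indices, so it suffices to verify the vanishing for $n=5$, which can be done by computer algebra. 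Finally, once $d_1=0$ one checks, using the translation-invariance of the $E^1$-page along anti-diagonals and the vanishing from Step 2, that no higher $d_p$ out of the $i+j=3$ spots survives either, so $E^\infty_{(i,j)}=E^1_{(i,j)}$ there.

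The main obstacle is Step 3: it is the assertion that the Gröbner degeneration from $P_{2 \times n}$ to $\init(P_{2 \times n})$ does not destroy this particular class of Koszul homology inside the BGG complexes, and the honest proof is a somewhat lengthy manipulation of the Gröbner/syzygy bases — which is why reducing it to the single case $n=5$ (making it one finite verification) is essential. The truncation bookkeeping in Steps 1 and 2 is routine but has to be carried out carefully, since it is precisely what singles out $(-N,N+3)$ as the one exceptional term.
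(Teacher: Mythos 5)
Your outline follows the paper's structure (identify $E^1$, kill incoming differentials via regularity, kill the outgoing $d_1$ by a computation), and Steps 1 and 2 match the paper's proof. However, there is a genuine gap in Step 3, specifically in the claim that the vanishing of $d_1\colon\rmH_3(\tilde{\bR}(R/\init(P_{2\times n})))\to\rmH_2(\tilde{\bR}(R/\init(P_{2\times n})))$ reduces to the case $n=5$ ``exactly as in the proof of \autoref{lemma: commuting diagram}.'' That analogy does not hold. In \autoref{lemma: commuting diagram} the assertion is a chain-level identity between matrices, checked entry by entry on the fixed bases of $V_0,\ldots,V_5$, each of whose elements is genuinely supported on at most five columns --- so nothing is lost in specializing to $n=5$. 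Here you need to show that $\delta^{\full}$ sends every element of $\ker(\delta^{\init}\colon E\otimes V_3\to E\otimes V_2)$ to a boundary, and cycles in $E\otimes V_3$ for large $n$ are emphatically \emph{not} supported on five columns. The correct way to reduce to a finite verification is to use that $\delta^{\full}$ is $E$-linear and therefore it suffices to test on a set of $E$-module generators of $\rmH_3$. But you have said nothing about where such generators live. The paper supplies exactly this missing ingredient: using Hochster's formula and the generation-in-degree fact extracted from Reisner's proof (cited in the paper as the source of the isomorphism in \autoref{Hochster}), it proves that $\rmH_3(\tilde{\bR}(R/\init(P_{2\times n})))$ is generated as an $E$-module by its degree-$6$ part, and writes down explicit generators $\phi_{ijk}$, each supported on only three columns. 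Only then does the computation become finite and independent of $n$. Without an argument that $\rmH_3$ is generated by small-support classes, ``it suffices to verify for $n=5$'' is unjustified.

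A secondary point: your final sentence dismisses the higher differentials $d_p$, $p\geq 2$, by appealing to ``translation-invariance and the vanishing from Step 2.'' Neither of those kills the \emph{outgoing} $d_p$ landing in total degree $2$, where $E^p$ need not vanish (since $\rmH_2\neq 0$). What actually makes the higher $d_p$ vanish in the paper's argument is the stronger statement that $\delta^{\full}(\psi_{ijk})=0$ on the nose, not merely modulo $\delta^{\init}$-boundaries: once the generators map to $0$ exactly, the zigzag defining every $d_p$ can be taken to be identically zero on a generating set (and on a boundary $\delta^{\init}(v)$ the zigzag passes through $\delta^{\full}(v)$ and dies at the next step since $(\delta^{\full})^2=0$). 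So the strength of the computation on the explicit generators is doing more work than your sketch acknowledges.
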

\begin{proof}
Consider the first page of the spectral sequence defined by the vertical filtration in homological degree $3$. For every $(i,j)$ with $i < -N$ or $j < -N$, we immediately have $E^{1}_{(i,j)} = 0$. 

For every $(i,j)$ with $i \geq -N$, $E^1_{(i,j)}$ is defined by the differentials $\delta^{\init}$, hence $E^1_{(i,j)} = \rmH_{i+j}(\tilde{\bR}(R / \init(P_{2 \times n})))$ except for $i = -N$, where the vertical differential of $\bbD$ is $0$. We are going to show that if $i+j = 3$, the maps 
\[
E^1_{(i,j-1)} \leftarrow E^1_{(i,j)} \leftarrow E^1_{(i,j+1)} 
\]
are identically $0$.

By \autoref{CMreg} the regularity of $R/\init(P_{2 \times n})$ is $3$. Hence \autoref{prop: BGG correspondence} guarantees $E^1_{(i,j+1)} = \rmH_4(\tilde{\bR}(R / \init(P_{2 \times n}))) = 0$. In particular, the map $E^1_{(i,j)} \leftarrow E^1_{(i,j+1)}$ is $0$, because its source is $0$.

To conclude, we show that the map 
\[
\delta^{\full} : \rmH_3(\tilde{\bR}(R / \init(P_{2 \times n})))  \to \rmH_2(\tilde{\bR}(R / \init(P_{2 \times n})))
\]
is zero. We check this on an explicitly given set of generators of $\rmH_3(\tilde{\bR}(R / \init(P_{2 \times n}))) $ as an $E$-module. First, note that the (classes modulo $\im(\delta^{\init})$ of) the following elements $\phi_{ijk}\in \init(P_{2 \times n})_3^\perp \otimes E$ for $i < j < k$ are a set of generators of $\rmH_3(\tilde{\bR}(R / \init(P_{2 \times n})))$:
    \[
    \begin{array}{ccl}
    \phi_{ijk} & =&y_{1i}y_{1j}y_{1k} e_{2i}e_{2j}e_{2k} \\
    &- &y_{1i}y_{1j}y_{2j} e_{1k}e_{2i}e_{2k} \\
    &+&y_{1i}y_{1k}y_{2k} e_{1j} e_{2i} e_{2j} \\
    &+& y_{1i} y_{2i} y_{2j} e_{1j} e_{1k} e_{2k} \\
    &+& y_{1i} y_{2i} y_{2k} e_{1j} e_{1k} e_{2j} \\
    &+ &y_{1j}y_{1k}y_{2k} e_{1i} e_{2i} e_{2j} \\
    &- &y_{1j} y_{2j} y_{2k} e_{1i} e_{1k} e_{2i}\\
    &+ &y_{2i}y_{2j}y_{2k} e_{1i} e_{1j} e_{1k} .
    \end{array}
    \]
This relies on the two isomorphisms of \autoref{Hochster} and \autoref{prop: BGG correspondence}. They give an isomorphism
\[
\rmH_3(\tilde{\bR}(R / \init(P_{2 \times n})))_{3+d} = \bigoplus_{|\sigma| = d+3} \rmH^{|\sigma|-3-1}(\Delta|_\sigma)
\]
where $\Delta$ is the Stanley-Reisner complex of $\init(P_{2 \times n})$. 

It turns out that as an $E$-module, $\rmH_3(\tilde{\bR}(R / \init(P_{2 \times n})))$ is generated by its degree $6$ component, corresponding to $\bigoplus_{|\sigma| = 6} \rmH^{2}(\Delta|_\sigma)$. This follows, as does the isomorphism of \autoref{Hochster}, from the proof in \S 3 of \cite{Reisner}. If $|\sigma|=6$, then $\rmH^{2}(\Delta|_\sigma) = 0$ unless $\sigma = \{x_{1i},x_{1j},x_{1k},x_{2i},x_{2j},x_{2k}\}$ with $i < j < k$; in this case $\rmH^{2}(\Delta|_\sigma)$ has a unique generator, combination of the $8$ triangles of $\Delta|_\sigma$. The corresponding class in $\rmH_3(\tilde{\bR}(R / \init(P_{2 \times n})))_{6}$ is expressed as an element in $\init(P_{2 \times n})_3^\perp \otimes E_3$ by $\phi_{ijk}$, where each monomial corresponds to one of the triangles in the generator of $\rmH^{2}(\Delta|_\sigma)$: for instance, the triangle $\{ x_{1i},x_{1j},x_{1k} \}$ is associated to the monomial $y_{1i}y_{1j}y_{1k} e_{2i}e_{2j}e_{2k}$. A similar element in $\rmH^2(\Delta|_{\sigma'})$ where $\sigma' = \{ x_{1i},x_{1j},x_{1k},x_{2i},x_{2j},x_{2k},x_{1l} \}$ is given by $\phi_{ijk} e_{1l}$.

We use the isomorphism induced by $\iota_{3}^{-1} \circ {\iota_3^{\init}}$ to express the elements $\phi_{ijk}$ as (classes of) elements of $(P_{2 \times n})_3^\perp \otimes E$:
\[
    \begin{array}{ccl}
    \psi_{ijk} &= &y_{1i}y_{1j}y_{1k}e_{2i}e_{2j}e_{2k} \\
    &- &y_{1j}(y_{1i}y_{2j} - y_{2i}y_{1j}) e_{1k}e_{2i}e_{2k}\\
    &+ &y_{1k}(y_{1i}y_{2k} - y_{2i}y_{1k}) e_{1j} e_{2i} e_{2j}\\
    &+ &y_{2i}(y_{1i}y_{2j} - y_{2i}y_{1j}) e_{1j} e_{1k} e_{2k} \\
    &+ &y_{2i}(y_{1i}y_{2k} - y_{2i}y_{1k}) e_{1j} e_{1k} e_{2j} \\
    &+ &y_{1k}(y_{1j}y_{2k} - y_{2j}y_{1k}) e_{1i} e_{2i} e_{2j}\\
    &- &y_{2j}(y_{1j}y_{2k} - y_{2j}y_{1k}) e_{1i} e_{1k} e_{2i} \\
    &+ &y_{2i}y_{2j}y_{2k} e_{1i} e_{1j} e_{1k} .
    \end{array}
\]
It suffices to show that the Koszul differential vanishes on $\psi_{ijk}$ to conclude that the map induced by $\delta^{\full}$ on $ \rmH_3(\tilde{\bR}(R / \init(P_{2 \times n})))$ is identically $0$. This is done via an explicit calculation.
 \end{proof}

\begin{thm}\label{thm: spectral sequence count}
For every $k$
\[
\dim \mathrm{H}_3(\tilde{\bR}(R / \init(P_{2 \times n})))_k = \dim \mathrm{H}_3(\tilde{\bR}(R / P_{2 \times n}))_k.
\]
In particular, the third row of the Betti table of $R / \init(P_{2 \times n})$ coincides with the third row of the Betti table of $R / P_{2 \times n}$.
\end{thm}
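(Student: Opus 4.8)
The plan is to apply \autoref{thm: 936 SSS} to the truncated double complex $\bbD$ in total homological degree $d=3$, for a fixed large integer $N$, and then let $N\to\infty$: in total degree $3$ both spectral sequences of $\bbD$ will turn out to be almost entirely degenerate, so the two sides of the equality in \autoref{thm: 936 SSS} will compute essentially $\dim\rmH_3(\tilde{\bR}(R/\init(P_{2\times n})))_k$ and $\dim\rmH_3(\tilde{\bR}(R/P_{2\times n}))_k$, up to error terms bounded independently of $N$.

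First I would treat the vertical side. Among the $2N+4$ lattice points with $i+j=3$ and $i,j\geq -N$, \autoref{lem: first page vertical} says all but the corner $(-N,N+3)$ contribute $\dim\rmH_3(\tilde{\bR}(R/\init(P_{2\times n})))_k$ to $\sum_{i+j=3}\dim(E^\infty_{(i,j)})_k$. The remaining term $E^\infty_{(-N,N+3)}$ is a subquotient of $E^1_{(-N,N+3)}$, which is the $\delta^{\init}$-homology at the last spot of the three-term complex $E\otimes V_5\to E\otimes V_4\to E\otimes V_3$; thus it is a quotient of $E\otimes V_3$, and its degree-$k$ component has dimension at most $C_k:=\dim(E\otimes V_3)_k$, a constant independent of $N$. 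I would then run the symmetric argument on the horizontal side: here $2N+3$ of the points are ``interior'' and one, $(N+3,-N)$, is a corner, and on each interior point $\bar E^1_{(i,j)}$ is the $\delta^{\full}$-homology of the strand $(E\otimes V_\bullet,\delta^{\full})$, which by the construction in \autoref{subsec: deformation} is a truncation of $\tilde{\bR}(R/P_{2\times n})$ containing homological degrees $2,3,4$ and hence equals $\rmH_3(\tilde{\bR}(R/P_{2\times n}))$. Since $\bar E^\infty_{(i,j)}$ is a subquotient of $\bar E^1_{(i,j)}$ and the corner term is again a quotient of $E\otimes V_3$, I get $\sum_{i+j=3}\dim(\bar E^\infty_{(i,j)})_k\leq (2N+3)\dim\rmH_3(\tilde{\bR}(R/P_{2\times n}))_k+C_k$. (One could instead prove the exact analogue of \autoref{lem: first page vertical} on this side, by checking that the Koszul differential vanishes on a generating set of $\rmH_3(\tilde{\bR}(R/P_{2\times n}))$, to obtain an equality; the inequality is enough.)

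Comparing via \autoref{thm: 936 SSS} in degree $3$ then yields
\[
(2N+3)\dim\rmH_3(\tilde{\bR}(R/\init(P_{2\times n})))_k\leq (2N+3)\dim\rmH_3(\tilde{\bR}(R/P_{2\times n}))_k+C_k,
\]
and since $C_k$ does not depend on $N$ while $2N+3\to\infty$, letting $N\to\infty$ forces $\dim\rmH_3(\tilde{\bR}(R/\init(P_{2\times n})))_k\leq\dim\rmH_3(\tilde{\bR}(R/P_{2\times n}))_k$. For the opposite inequality I would invoke upper semicontinuity of the Betti numbers under passing to an initial ideal, together with \autoref{prop: BGG correspondence}: $\dim\rmH_3(\tilde{\bR}(R/P_{2\times n}))_k=b_{k-3,k}(R/P_{2\times n})\leq b_{k-3,k}(R/\init(P_{2\times n}))=\dim\rmH_3(\tilde{\bR}(R/\init(P_{2\times n})))_k$. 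Hence the two agree for every $k$, which by \autoref{prop: BGG correspondence} is precisely the claimed coincidence of the third rows of the Betti tables.

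The substance of the proof is \autoref{lem: first page vertical}, in particular the vanishing of $\delta^{\full}$ on the explicit generators $\psi_{ijk}$ of $\rmH_3(\tilde{\bR}(R/\init(P_{2\times n})))$; without that the vertical spectral sequence would not degenerate at the first page in degree $3$ and the count would collapse. Internal to the argument above, the only delicate point is the boundary bookkeeping: one must check that exactly one anti-diagonal point of total degree $3$ is exceptional on each filtration and that both exceptional terms stay bounded by a constant independent of $N$, so the limit $N\to\infty$ is legitimate. It is also worth noting that the horizontal side only requires an upper bound, which conveniently avoids having to exhibit generators of $\rmH_3(\tilde{\bR}(R/P_{2\times n}))$.
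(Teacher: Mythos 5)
Your proposal follows the paper's own argument in all essentials: vertical filtration combined with \autoref{lem: first page vertical} gives the lower bound $H(k)\geq (2N+3)\dim\rmH_3(\tilde{\bR}(R/\init(P_{2\times n})))_k$; the horizontal filtration gives the upper bound $H(k)\leq (2N+3)\dim\rmH_3(\tilde{\bR}(R/P_{2\times n}))_k+C(k)$ using only $\bar E^\infty\leq\bar E^1$; letting $N\to\infty$ yields one inequality, and upper semicontinuity of Betti numbers under Gr\"obner degeneration supplies the other. The one substantive remark worth making is that you correctly recognize a point the paper leaves slightly implicit: on the horizontal side, no analogue of the hard computation in \autoref{lem: first page vertical} (vanishing of the induced $\delta^{\init}$ on generators of $\rmH_3(\tilde{\bR}(R/P_{2\times n}))$) is needed, because $\bar E^1$ is purely row homology and the inequality $\bar E^\infty\leq\bar E^1$ suffices. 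As a small caveat on the boundary bookkeeping you flag yourself: you label the exceptional lattice point as $(-N,N+3)$ on the vertical side and $(N+3,-N)$ on the horizontal side. Tracing through the truncation, the vertical filtration's exceptional point on the anti-diagonal $i+j=3$ is where the column truncation bites, namely $(N+3,-N)$, and the horizontal one is $(-N,N+3)$; so the labels should be swapped. (The paper's own lemma statement also labels the vertical corner $(-N,N+3)$, so you have inherited that slip.) This is harmless for the argument: either way there is exactly one exceptional term on each side, with degree-$k$ dimension bounded by $\dim(E\otimes V_3)_k$ independently of $N$, so the $N\to\infty$ limit goes through as you describe.
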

\begin{proof}
Let $H(k)$ be the quantity in the statement of \autoref{thm: 936 SSS}. For $i = -N+1 \vvirg N+3$, \autoref{lem: first page vertical} shows $E^{\infty}_{(i,j)} = \rmH_3(\tilde{\bR}(R / \init(P_{2 \times n})))$. Therefore,
\[
H(k) \geq (2N+3) \dim \rmH_3(\tilde{\bR}(R / \init(P_{2 \times n}))_k.
\]
The same calculation as \autoref{lem: first page vertical}, performed on the horizontal filtration, shows $\bar{E}^1_{(i,j)} = \rmH_3(\tilde{\bR}(R / P_{2 \times n}))$ for $j = -N+1 \vvirg N+3$ and $0$ if $j < -N$ or $j > N+3$. For every $k$, we have $\dim (\bar{E}^\infty_{(i,j)})_k \leq \dim (\bar{E}^1_{(i,j)})_k$, which is bounded above by a quantity $C(k)$ not depending on $N$; therefore
\[
H(k) \leq (2N+3) \dim \rmH_3(\tilde{\bR}(R / P_{2 \times n}))_k + C(k).
\]
We deduce
\[
0 = H(k) - H(k) \geq (2N+3) ( \dim \rmH_3(\tilde{\bR}(R / \init(P_{2 \times n})))_k - \dim \rmH_3(\tilde{\bR}(R / P_{2 \times n}))_k) - C(k).
\]
This holds for every $N$, hence 
\[
\dim \rmH_3(\tilde{\bR}(R / \init(P_{2 \times n})))_k - \dim \rmH_3(\tilde{\bR}(R / P_{2 \times n}))_k \leq 0.
\]
By \autoref{prop: BGG correspondence}, we deduce 
\[
\dim \Tor_k(R / \init(P_{2 \times n}),\bbK)_{k+3} \leq \dim \Tor_k(R / P_{2 \times n},\bbK)_{k+3}.
\]
On the other hand, the Betti numbers of $R / P_{2 \times n}$ are bounded from above by the Betti numbers of $R / \init(P_{2 \times n})$, hence equality holds.
\end{proof}

\section{Representation Theory Aspects}
In this section, we show how our result yields insight into the representation theory related to permanents. The polynomial ring $\bbC[x_{ij} : i = 1,2, j=1 \vvirg n]$ has a natural action of the two symmetric groups $\frakS_2$ and $\frakS_n$ acting on the first and second index of the variables $x_{ij}$: if $(\sigma, \tau) \in \frakS_{2} \times \frakS_n$ then $(\sigma,\tau) \cdot x_{ij} = x_{\sigma_i,\tau_j}$. The homogeneous components of the ideal $P_{2 \times n}$ are invariant under this action. As a consequence, the resolution of $P_{2 \times n}$ has a $(\frakS_2 \times \frakS_n)$-equivariant structure. This structure is not easy to understand in general; we record here some results.

A partition $\lambda$ of an integer $n$ is a non-increasing integer sequence of length (at most) $n$, $\lambda_1 \geq \lambda_2 \geq \cdots \geq \lambda_n \geq 0$ with $\sum_1^k \lambda _i = n$. We identify two partitions if they are equal up to trailing zeros. 

For a partition $\lambda$ of $n$, let $[\lambda]$ denote the associated Specht module; this is an irreducible representation for $\frakS_n$ and a basis is given by \emph{standard Young tableaux} of shape $\lambda$ and content $\{1 \vvirg n\}$. The Specht modules are all the irreducible representations of $\frakS_n$ up to isomorphism. We refer to \cite{Ful:YT} for the details of the construction.

For every $n$, let $V_n$ be a representation of $\frakS_n$. We say that the collection $V_\bullet$ is \emph{inherited} from the $k$-th level if, for $n \geq k$, $V_n$ is a quotient of the induced representation $\Ind_{\frakS_{k} \times \frakS_{n-k}} ( V_k \otimes [n-k])$. For instance, the standard Weyl action of $\frakS_n$ on $\bbC^n$, given by permuting the elements of a fixed basis, is inherited from the $1$-st level: in fact $\bbC^n = \Ind_{\frakS_{1} \times \frakS_{n-1}}^{\frakS_n} ([1] \times [n-1])$. 

\subsection{The ideal $P_{2 \times n}$} Under the action of $\frakS_n$, we immediately have that the homogeneous component $[P_{2 \times n}]_2$ is isomorphic, as a $\frakS_n$-representation, to the space $\bbC^{\binom{[n]}{2}}$ with basis indexed by $2$-subsets of $[n]$: the basis element $e_{ij}$ is identified with the permanent $p_{ij} = x_{1i}x_{2j} + x_{1j}x_{2i}$.

Note that for every $2$-subset $\{ i,j\} \subseteq [n]$, the permanent $p_{ij}$ is invariant for the action of the copy of $\frakS_{2} \subseteq \frakS_{n}$ acting by swapping the indices $i$ and $j$. More precisely, we have 
\[
[P_{2 \times n}]_2 = \Ind^{\frakS_n}_{\frakS_2 \times \frakS_{n-2}} ( [2] \times [n-2]).
\]
Therefore, the action of $\frakS_n$ is inherited from the $2$-nd level.

We have an equivariant form of the first differential of the resolution. Let $Z_1(n) = \bbC^{\binom{[n]}{2}}$ with a basis $e_{ij}$; define
\[
\phi_1 : Z_1(n)  \to [P_{2 \times n}]_2
\]
mapping  $e_{ij}$ to $p_{ij}$; tensoring $\phi_1$ by $R$ gives a map of $R$-modules which coincide with the first differential of the resolution. One can compute the decomposition of $[P_{2 \times n}]_2$ in terms of Specht modules, for instance via the Littlewood-Richardson rule \cite[Sec. 7.3]{Ful:YT}:
\[
 [P_{2 \times n}]_2 = [n] \oplus [n-1,1] \oplus [n-2,2].
\]

\subsection{The first syzygy} From \autoref{main}, the kernel of the map $\phi_1$ is generated by a subspace $Z_2(n) \subseteq Z_1(n) \otimes R_2$ of dimension 
\[
a_2 = \binom{\binom{n}{2}}{2} + 3 \binom{n}{4} + 2 \cdot 2 \binom{n}{4}.
\]
We will show that $Z_2(n)$ is inherited from the $4$-th level. Write $e_{ij}$ for a basis of the space $Z_1(n)$ as before.

First, we consider Koszul syzygies, which arise in two ways, inherited respectively from $3$-rd and $4$-th level:
\begin{itemize}
    \item Consider the Koszul syzygy among subpermanents sharing one column:
    \[
    K_{3} = \langle e_{ij} \otimes p_{ik} - e_{ik} \otimes p_{ij} : i,j,k \text{ distinct indices} \rangle \subseteq Z_1(n)  \otimes R_2.
    \]
For $n = 3$, we have $Z_2(3) = K_3 = [2,1]$ as a $\frakS_3$-representation. In general $K_3 = \Ind_{\frakS_{3} \times \frakS_{n-3}}^{\frakS_4} Z_2(3)$.

\item Consider the Koszul syzygies among \emph{disjoint} subpermanents:
    \[
    K_{4} = \langle e_{ij} \otimes p_{k\ell} - e_{k\ell} \otimes p_{ij} : i,j,k,\ell \text{ distinct indices} \rangle \subseteq Z_1(n)  \otimes R_2.
    \]
We have $K_4 = \Ind_{\frakS_{4} \times \frakS_{n-4}}^{\frakS_4} Z_2(4)$. One can determine the irreducible decomposition of $K_4$, but it is not very enlightening.
\end{itemize}
There are other two families of syzygies. 

Let $m_{ij} = x_{1i}x_{2j} - x_{1j}x_{2i}$ be the $2 \times 2$ minors corresponding to columns $i,j$. Then 
\[
M = \langle \sum_{\substack{ \sigma \in \frakS_4 \\ \sigma_1 = 1}}(-1)^{ \sigma} e_{i_{\sigma_1} i_{\sigma_2}} \otimes m_{i_{\sigma_3}i_{\sigma_4}} :  i_1 \vvirg i_4 \text{ distinct} \rangle \subseteq   Z_1(n)  \otimes R_2,
\]
where $(-1)^{\sigma}$ denotes the sign of the permutation $\sigma \in \frakS_4$. One can verify $M = [2,1,1]$ when $n = 4$ and in general $M = \Ind^{\frakS_n}_{\frakS_{4} \times \frakS_{n-4}} [2,1,1]$.

Finally, define
\[
N = \langle  \sum _{\substack{ \tau \in \frakS_{4} \\ \tau \{0,1\} = \{0,1\}}}
(-1)^\tau x_{\eps i_{\tau_1}} x_{\eps i_{\tau_3}} \otimes e_{i_{\tau_2}i_{\tau_4}} : \eps = 0,1, \text{ and } i_1 \vvirg i_4 \text{ distinct} \rangle \subseteq  Z_1(n)  \otimes R_2.
\]
For $n=4$, one can verify $N = [2,2]$ as a $\frakS_4$-representation. In general $N = \Ind^{\frakS_n}_{\frakS_4 \times \frakS_{n-4}}[2,2]$. 

\subsection{Directions for Future Research}
A detailed analysis as in the case of the first syzygy is much more involved for the higher syzygies: additional difficulties arise when the syzygies are mixed between linear and quadratic, which is the case starting from the second syzygy. We pose this as a direction for future research.
\vskip .1in
\noindent{\bf Acknowledgments} 
Much of this work took place during a visit to Santa Fe Institute for a working group on Geometric Complexity Theory, and we thank SFI for providing a wonderful and stimulating environment, and J.M. Landsberg for initial conversations.

{\small 
\bibliographystyle{alphaurl}
\bibliography{perm.bib}

\begin{thebibliography}{BLMW11}

\bibitem[AT92]{AT}
N.~Alon and M.~Tarsi.
\newblock {Colorings and orientations of graphs}.
\newblock {\em Combinatorica}, 12(2):125–134, 1992.
\newblock \href {https://doi.org/10.1007/BF01204715}
  {\path{doi:10.1007/BF01204715}}.

\bibitem[BCP99]{BCP}
D.~Bayer, H.~Charalambous, and S.~Popescu.
\newblock {Extremal Betti Numbers and Applications to Monomial Ideals}.
\newblock {\em J. Algebra}, 2(221):497–512, 1999.
\newblock \href {https://doi.org/10.1006/jabr.1999.7970}
  {\path{doi:10.1006/jabr.1999.7970}}.

\bibitem[BGL13]{BGL}
J.~Buczynski, A.~Ginensky, and J.~M. Landsberg.
\newblock {Determinantal equations for secant varieties and the
  {E}isenbud–{K}oh–{S}tillman conjecture}.
\newblock {\em J. Lond. Math. Soc.}, 88(1):1–24, 2013.

\bibitem[BIP19]{BuIkPa:no_occurrence_obstructions_in_GCT}
P.~Bürgisser, C.~Ikenmeyer, and G.~Panova.
\newblock {No occurrence obstructions in geometric complexity theory}.
\newblock {\em J. Amer. Math. Soc.}, 32(1):163–193, 2019.
\newblock \href {https://doi.org/10.1090/jams/908}
  {\path{doi:10.1090/jams/908}}.

\bibitem[BLMW11]{BLMW}
P.~Bürgisser, J.~M. Landsberg, L.~Manivel, and J.~Weyman.
\newblock {An overview of mathematical issues arising in the {G}eometric
  {C}omplexity {T}heory approach to {$\mathit{VP} \neq \mathit{VNP}$}}.
\newblock {\em SIAM J. Comput.}, 40(4):1179–1209, 2011.
\newblock \href {https://doi.org/10.1137/090765328}
  {\path{doi:10.1137/090765328}}.

\bibitem[BT20]{BoTe:WaringRankSyz}
M.~Boij and Z.~Teitler.
\newblock {A bound for the Waring rank of the determinant via syzygies}.
\newblock {\em Lin. Alg. Appl.}, 587:195–214, 2020.
\newblock \href {https://doi.org/10.1016/j.laa.2019.11.007}
  {\path{doi:10.1016/j.laa.2019.11.007}}.

\bibitem[EH00]{E}
D.~Eisenbud and J.~Harris.
\newblock {\em {The geometry of schemes}}, volume 197 of {\em {Graduate Texts
  in Mathematics}}.
\newblock Springer-Verlag, New York, 2000.

\bibitem[Eis88]{e1}
D.~Eisenbud.
\newblock {Linear sections of determinantal varieties}.
\newblock {\em Amer. J. Math.}, 110(3):541–575, 1988.
\newblock \href {https://doi.org/10.2307/2374622} {\path{doi:10.2307/2374622}}.

\bibitem[Eis95]{Ecommalg}
D.~Eisenbud.
\newblock {\em Commutative algebra}, volume 150 of {\em Graduate Texts in
  Mathematics}.
\newblock Springer-Verlag, New York, 1995.
\newblock With a view toward algebraic geometry.

\bibitem[Eis05]{Eisenbud:SyzygyBook}
D.~Eisenbud.
\newblock {\em {The geometry of syzygies}}, volume 229 of {\em {Graduate Texts
  in Mathematics}}.
\newblock Springer-Verlag, New York, 2005.

\bibitem[ELSW18]{elsw}
K.~Efremenko, J.~M. Landsberg, H.~Schenck, and J.~Weyman.
\newblock {On minimal free resolutions of sub-permanents and other ideals
  arising in complexity theory}.
\newblock {\em J. Algebra}, 503:8–20, 2018.
\newblock \href {https://doi.org/10.1016/j.jalgebra.2018.01.021}
  {\path{doi:10.1016/j.jalgebra.2018.01.021}}.

\bibitem[EN62]{EN}
J.~A. Eagon and D.~G. Northcott.
\newblock {Ideals defined by matrices and a certain complex associated with
  them}.
\newblock {\em Proc. Roy. Soc. London Ser. A}, 269(1337):188--204, 1962.

\bibitem[ER98]{ER}
J.~A. Eagon and V.~Reiner.
\newblock {Resolutions of {S}tanley-{R}eisner rings and {A}lexander duality}.
\newblock {\em J. Pure and Appl. Alg.}, 130(3):265–275, 1998.
\newblock \href {https://doi.org/10.1016/S0022-4049(97)00097-2}
  {\path{doi:10.1016/S0022-4049(97)00097-2}}.

\bibitem[Ful97]{Ful:YT}
W.~Fulton.
\newblock {\em {Young tableaux. With applications to representation theory and
  geometry}}, volume~35 of {\em {London Mathematical Society Student Texts}}.
\newblock Cambridge University Press, Cambridge, 1997.

\bibitem[GIP17]{GesIkPa:GCTMatrixPowering}
F.~Gesmundo, C.~Ikenmeyer, and G.~Panova.
\newblock {Geometric complexity theory and matrix powering}.
\newblock {\em Diff. Geom. Appl.}, 55:106–127, 2017.
\newblock \href {https://doi.org/10.1016/j.difgeo.2017.07.001}
  {\path{doi:10.1016/j.difgeo.2017.07.001}}.

\bibitem[GKKS13]{GKKS:ArithmeticCircuitsChasmDepthThree}
A.~Gupta, P.~Kamath, N.~Kayal, and R.~Saptharishi.
\newblock {Arithmetic circuits: A chasm at depth three}.
\newblock {\em Electronic Colloquium on Computational Complexity (ECCC)},
  20:26, 2013.
\newblock \href {https://doi.org/10.1109/FOCS.2013.68}
  {\path{doi:10.1109/FOCS.2013.68}}.

\bibitem[GL19]{GesLan:ExplicitPolysMaxPartialsMulmuley}
F.~Gesmundo and J.~M. Landsberg.
\newblock {Explicit polynomial sequences with maximal spaces of partial
  derivatives and a question of {K. Mulmuley}}.
\newblock {\em Theory of Computing}, 15(3):1 – 24, 2019.
\newblock \href {https://doi.org/10.4086/toc.2019.v015a003}
  {\path{doi:10.4086/toc.2019.v015a003}}.

\bibitem[Hoc72]{H}
M.~Hochster.
\newblock {Rings of invariants of tori, Cohen-Macaulay rings generated by
  monomials,and polytopes. \hskip -.1in}.
\newblock {\em Ann. Math.}, 96:318–337, 1972.
\newblock \href {https://doi.org/10.2307/1970791} {\path{doi:10.2307/1970791}}.

\bibitem[IL17]{IkLan:Compl_of_perm_in_various_comp_models}
C.~Ikenmeyer and J.~M. Landsberg.
\newblock {On the complexity of the permanent in various computational models}.
\newblock {\em J. Pure Appl. Algebra}, 221(12):2911–2927, 2017.
\newblock \href {https://doi.org/10.1016/j.jpaa.2017.02.008}
  {\path{doi:10.1016/j.jpaa.2017.02.008}}.

\bibitem[IP17]{IkPa:Rectangular_Kron_in_GCT}
C.~Ikenmeyer and G.~Panova.
\newblock {Rectangular Kronecker coefficients and plethysms in geometric
  complexity theory}.
\newblock {\em Adv. Math.}, 319:40–66, 2017.
\newblock \href {https://doi.org/10.1016/j.aim.2017.08.024}
  {\path{doi:10.1016/j.aim.2017.08.024}}.

\bibitem[Kir08]{K}
G.~A. Kirkup.
\newblock {Minimal primes over permanental ideals}.
\newblock {\em Trans. Amer. Math. Soc.}, 360(7):3751–3770, 2008.
\newblock \href {https://doi.org/10.1090/S0002-9947-08-04340-7}
  {\path{doi:10.1090/S0002-9947-08-04340-7}}.

\bibitem[Lan17]{Lan:GeometryComplThBook}
J.~M. Landsberg.
\newblock {\em {Geometry and complexity theory}}, volume 169 of {\em {Cambridge
  Studies in Advanced Mathematics}}.
\newblock Cambridge University Press, Cambridge, 2017.

\bibitem[Las78]{L}
A.~Lascoux.
\newblock {Syzygies des variétés déterminantales}.
\newblock {\em Adv. Math.}, 30(3):202–237, 1978.
\newblock \href {https://doi.org/10.1016/0001-8708(78)90037-3}
  {\path{doi:10.1016/0001-8708(78)90037-3}}.

\bibitem[LS00]{LS}
R.~C. Laubenbacher and I.~Swanson.
\newblock {Permanental ideals}.
\newblock {\em J. Symb. Comp.}, 30(2):195–205, 2000.
\newblock \href {https://doi.org/10.1006/jsco.2000.0363}
  {\path{doi:10.1006/jsco.2000.0363}}.

\bibitem[MNS12]{Mul3}
K.~D. Mulmuley, H.~Narayanan, and M.~Sohoni.
\newblock {Geometric complexity theory III: on deciding nonvanishing of a
  Littlewood–Richardson coefficient}.
\newblock {\em J. Alg. Combinatorics}, 36(1):103–110, 2012.
\newblock \href {https://doi.org/10.1007/s10801-011-0325-1}
  {\path{doi:10.1007/s10801-011-0325-1}}.

\bibitem[MS01]{Mul1}
K.~D. Mulmuley and M.~Sohoni.
\newblock {Geometric {C}omplexity {T}heory. {I}. {A}n approach to the {P} vs.\
  {NP} and related problems}.
\newblock {\em SIAM J. Comput.}, 31(2):496–526 (electronic), 2001.
\newblock \href {https://doi.org/10.1137/S009753970038715X}
  {\path{doi:10.1137/S009753970038715X}}.

\bibitem[MS05]{MS}
E.~Miller and B.~Sturmfels.
\newblock {\em {Combinatorial Commutative Algebra}}, volume 227 of {\em
  {Graduate Texts in Mathematics}}.
\newblock Springer, New York, 2005.

\bibitem[MS08]{Mul2}
K.~D. Mulmuley and M.~Sohoni.
\newblock {Geometric {C}omplexity {T}heory. {II}. {T}owards explicit
  obstructions for embeddings among class varieties}.
\newblock {\em SIAM J. Comput.}, 38(3):1175–1206, 2008.
\newblock \href {https://doi.org/10.1137/080718115}
  {\path{doi:10.1137/080718115}}.

\bibitem[Rei76]{Reisner}
G.~A. Reisner.
\newblock Cohen-{M}acaulay quotients of polynomial rings.
\newblock {\em Advances in Math.}, 21(1):30--49, 1976.

\bibitem[Sch03]{S}
H.~Schenck.
\newblock {\em {Computational algebraic geometry}}, volume~58 of {\em {London
  Mathematical Society Student Texts}}.
\newblock Cambridge University Press, Cambridge, 2003.

\bibitem[Sch22]{SSS}
H.~Schenck.
\newblock {\em Algebraic foundations for applied topology and data analysis},
  volume~1 of {\em Mathematics of Data}.
\newblock Springer, Cham, springer, 2022.
\newblock URL: \url{https://doi.org/10.1007/978-3-031-06664-1}.

\bibitem[SS12]{SidS}
J.~Sidman and G.~G. Smith.
\newblock {Linear determinantal equations for all projective schemes}.
\newblock {\em Algebra Number Theory}, 5(8):1041--1061, 2012.

\bibitem[Val79]{V}
L.~G. Valiant.
\newblock {Completeness classes in algebra}.
\newblock In {\em {Proceedings of the 11th ACM Symp. on Th. of Comp.}}, {STOC
  '79}, page 249–261, New York, 1979. ACM.
\newblock \href {https://doi.org/10.1145/800135.804419}
  {\path{doi:10.1145/800135.804419}}.

\bibitem[Zie95]{Z}
G.~M. Ziegler.
\newblock {\em {Lectures on polytopes}}, volume 152 of {\em {Graduate Texts in
  Mathematics}}.
\newblock Springer-Verlag, New York, 1995.

\end{thebibliography}
}
\end{document}